\documentclass[10pt, reqno]{amsart}

\usepackage{chngcntr}
\usepackage{apptools}

\calclayout

\def\BibTeX{{\rm B\kern-.05em{\sc i\kern-.025em b}\kern-.08em
    T\kern-.1667em\lower.7ex\hbox{E}\kern-.125emX}}

\newtheorem{thm}{Theorem}[section]

\newtheorem{lem}[thm]{Lemma}
\newtheorem{prop}[thm]{Proposition}

\newtheorem{lemma}[thm]{Lemma}

\theoremstyle{definition}

\theoremstyle{remark}
\newtheorem{rem}{Remark}[section]

\numberwithin{equation}{section}

    \newcommand{\floor}[1]{\lfloor#1\rfloor}

    \newcommand{\EE}{\mathbb{E}}

    \newcommand{\Exp}{\operatorname{E}}
    \newcommand{\E}{\Exp}

    \renewcommand{\Pr}{\operatorname{P}}

    \newcommand{\dto}{\xrightarrow{d}}
    
    \newcommand{\vto}{\xrightarrow{v}}
     \newcommand{\Pto}{\xrightarrow{P}}
    
    \renewcommand{\d}{\mathrm{d}}

    \newcommand{\rmd}{\mathrm{d}}

\newcommand{\PP}{\mathrm{P}}

\newcommand{\be}{\begin{equation}}
    \newcommand{\ee}{\end{equation}}

\begin{document}

\title[Limit theory for a class of non-stationary linear processes] 
{Limit theory for a class of non-stationary linear processes with weakly dependent heavy-tailed innovations}

%



\author{Danijel Krizmani\'{c}}




\subjclass[2020]{Primary 60F17; Secondary 60G55}
\keywords{Point processes, Non-stationarity, Linear process, Regular variation, $M_{1}$ topology}


\begin{abstract}
For a class of non-stationary linear processes $(X_{n})_{n}$ with weakly dependent heavy-tailed innovations we prove a convergence result for the space-time point processes $N_{n}=\sum_{i=1}^{n}\delta_{(i/n, X_{i}/a_{n})}$. Using this result and the continuous mapping theorem we then derive functional convergence of
the corresponding partial maxima process
in the space of c\`{a}dl\`{a}g functions on $[0,1]$.
\end{abstract}

\maketitle

\section{Introduction}\label{intro}

An important tool in the study of sums and extremes of sequences of regularly varying random variables $(X_{n})_{n}$ are the (functional) point processes of exceedences. They are defined as
$$ N_{n} = \sum_{i=1}^{n}\delta_{(i/n, a_{n}^{-1}X_{i})}, \qquad n \in \mathbb{N},$$
on $[0,1] \times (\mathbb{R} \setminus \{0\})$, where $(a_{n})_{n}$ is an appropriately chosen scaling sequence. Letting the sum go to infinity one obtains the corresponding point processes on $[0,\infty) \times (\mathbb{R} \setminus \{0\})$.
In the i.i.d.~case it is well known that the regular variation of the distribution of $X_{1}$ is equivalent to the convergence of point processes $N_{n}$ towards a Poisson point process (see for instance Resnick~\cite{Re87}, Proposition 3.21). By this result asymptotical distributional properties of various functionals of the sequence $(X_{n})_{n}$, such as partial sums, extremes, last-exit times and self-normalized sums, can be derived (see Resnick~\cite{Re86} and Leadbetter and Rootz\'{e}n~\cite{LeRo88}). There exists various extensions of this theory to dependent stationary time series, see for instance Davis and Resnick~\cite{DaRe85}, Davis and Hsing~\cite{DaHs95}, Basrak et al.~\cite{BKS} and Krizmani\'{c}~\cite{Kr14}. In~\cite{DaRe85} a stationary sequence of moving averages
\begin{equation}\label{e:linproc}
X_{n} = \sum_{j=0}^{\infty}c_{j}Z_{n-j}, \qquad n \in \mathbb{N},
\end{equation}
is studied, together with various functionals of $(X_{n})_{n}$ such as extremes, sums and sample covariance functions, where $(Z_{i})_{i \in \mathbb{Z}}$ is a sequence of real valued i.i.d.~random variables and $(c_{j})_{j \geq 0}$ is a sequence of real numbers such that the series in (\ref{e:linproc}) converges almost surely.
It is shown that if the $Z_{i}$'s have regularly varying tail probabilities, that is
\begin{equation}\label{e:regvarpr1}
 \Pr(|Z_{i}| > x) = x^{-\alpha}L(x), \qquad x >0,
\end{equation}
for some slowly varying function $L$ (i.e. $\lim_{x \to \infty}L(tx)/L(x) = 1$ for all $t>0$) and $\alpha >0$, and
\begin{equation}\label{e:pq}
  \lim_{x \to \infty} \frac{\Pr(Z_{i} > x)}{\Pr(|Z_{i}| > x)}=p, \qquad
    \lim_{x \to \infty} \frac{\Pr(Z_{i} < -x)}{\Pr(|Z_{i}| > x)}=q,
\end{equation}
where $p \in [0,1]$ and $p+q=1$, and the sequence of coefficients $(c_{j})_{j}$ satisfies
\begin{equation}\label{e:momcond}
\sum_{j=0}^{\infty} |c_{j}|^{\delta} < \infty \qquad \textrm{for some}  \ \delta < \alpha,\,0 < \delta \leq 1
\end{equation}
(under this condition the series in (\ref{e:linproc}) converges almost surely, see Cline~\cite{Cl83}, Theorem 2.1), then
\begin{equation}\label{e:ppconv}
 N_{n} \dto N = \sum_{i=1}^{\infty}\sum_{j=0}^{\infty}\delta_{(T_{i}, P_{i}c_{j})} \qquad \textrm{as} \ n \to \infty,
\end{equation}
in the space $\mathbf{M}_p([0,1] \times \mathbb{R} \setminus \{0\})$ of Radon point
measures on $[0,1] \times (\mathbb{R} \setminus \{0\})$,
where $N':=\sum_{i=1}^{\infty}\delta_{(T_{i}, P_{i})}$ is a Poisson point process with intensity measure $Leb \times \mu$, with
\begin{equation*}\label{e:mu}
  \mu(\rmd x) = \bigl( p \, 1_{(0, \infty)}(x) + q \, 1_{(-\infty, 0)}(x) \bigr) \, \alpha |x|^{-\alpha-1} \, \rmd x.
\end{equation*}
The scaling sequence $(a_{n})_{n}$ in the definition of the point processes $N_{n}$ consists of positive real numbers such that
\begin{equation}\label{eq:niz}
n \Pr(|Z_{1}| > a_{n}) \to 1 \qquad \textrm{as} \ n \to \infty.
\end{equation}
Niu~\cite{Ni97} extended this result to non-stationary moving averages of independent random variables with regularly varying tail probabilities and different scale parameters. More precisely, for moving averages
\begin{equation}\label{e:linprocNiu}
X_{n} = \sum_{j=0}^{\infty}c_{j}\sigma_{n-j}Z_{n-j}, \qquad n \in \mathbb{N},
\end{equation}
where the sequences $(Z_{i})$ and $(c_{j})$ are as above, and the scale parameters $\sigma_{i}$ satisfy $0 < \sigma_{i} \leq \delta_{0} < \infty$ and
$$ \frac{1}{n} \sum_{i=1}^{n}\sigma_{i}^{\alpha} \to \sigma^{\alpha} >0 \qquad \textrm{as} \ n \to \infty,$$
the point process convergence in (\ref{e:ppconv}) still holds, with $N'$ being a Poisson point process with intensity measure $Leb \times (\sigma^{\alpha} \mu)$ (Niu~\cite{Ni97}, Theorem 2.1). The non-stationarity of the series $(X_{n})$ is due to the nonconstant scale parameters $\sigma_{i}$, which in the limiting point process are represented by the ``average value'' $\sigma$.

In this article we will extend further this result to the case of weakly dependent innovations. We will assume the sequence $(Z_{i})$ satisfies the mixing condition $\mathcal{A}'$ (which holds under the more familiar strong mixing condition) and Leadbetter's dependence condition $D'$ familiar from extreme value analysis. Under similar assumptions as in~\cite{Ni97} we will derive the convergence of point processes $N_{n}$, and then using the continuous mapping theorem we will transfer this convergence to functional convergence of partial
maxima processes
in the space $D([0,1], \mathbb{R})$ of real valued c\`{a}dl\`{a}g functions on $[0,1]$. Due to possible clustering of large values, the most familiar of Skorokhod's topologies on $D([0,1], \mathbb{R})$, the $J_{1}$ topology, becomes inappropriate, and hence we will use the weaker Skorokhod $M_{1}$ topology.

 The paper is organized as follows. In Section~\ref{pointproc} we introduce basic notions and show preliminary results on point processes, and then prove a theorem about point process convergence in the case of non-stationary linear processes with weakly dependent heavy-tailed innovations. In Section~\ref{s:partialmp} we apply this theorem to obtain functional convergence of partial maxima processes
in the space of c\`{a}dl\`{a}g functions on $[0,1]$ with the Skorokhod $M_{1}$ topology. Finally, three technical results are given in Appendix A.

\section{Point process convergence}\label{pointproc}

  We start with some basic notions and results on point processes which will be used later on. For more background on the theory of point processes we refer to Kallenberg~\cite{Ka83} and Resnick~\cite{Re87}. Let $\mathbb{E} = [-\infty, \infty] \setminus \{0\}$. For $x,y \in \mathbb{E}$ define
 \begin{equation}\label{e:metricrho}
    \rho (x,y) = \max \Big\{ \Big| \frac{1}{|x|} -\frac{1}{|y|} \Big|, |\textrm{sign}\,x - \textrm{sign}\,y| \Big\},
 \end{equation}
where $\textrm{sign}\,z = z/|z|$. With the metric $\rho$, $\mathbb{E}$ becomes a locally compact, complete and separable matric space. A set $B \subseteq \mathbb{E}$ is relatively compact if it is bounded away from origin, that is, if there exists $u>0$ such that $B \subseteq \mathbb{E} \setminus [-u,u]$. Denote by $\mathcal{B}(\mathbb{E})$ the $\sigma$--algebra generated by $\rho$--open sets. Let $M_{+}(\mathbb{E})$ be the class of all Radon measures on $\mathbb{E}$, i.e.~all nonnegative measures that are finite on relatively compact subsets of $\mathbb{E}$. A useful topology for $M_{+}(\mathbb{E})$ is the vague topology which renders $M_{+}(\mathbb{E})$ a complete separable metric space. A sequence $(\mu_{n})_{n}$ in $M_{+}(\mathbb{E})$ converges vaguely to $\mu \in M_{+}(\mathbb{E})$ (written $\mu_{n} \xrightarrow{v} \mu$) if $\int f\,d \mu_{n} \to \int f\,d \mu$ for all $f \in C_{K}^{+}(\mathbb{E})$, where $C_{K}^{+}(\mathbb{E})$ denotes the class of all nonnegative continuous real functions on $\mathbb{E}$ with compact support. One metric that induces the vague topology is given by
\begin{equation}\label{e:vaguemetric}
 d_{v}(\mu_{1}, \mu_{2}) = \sum_{k=1}^{\infty} 2^{-k} \bigg( 1- \exp \bigg\{ - \bigg| \int_{\mathbb{E}} f_{k}(x)\,\mu_{1}(\rmd x) - \int_{\mathbb{E}} f_{k}\,\mu_{2}(\rmd x) \bigg| \bigg\} \bigg), \quad \mu_{1}, \mu_{2} \in M_{+}(\mathbb{E}),
\end{equation}
for some sequence of functions $(f_{k})_{k}$ in $C_{K}^{+}(\mathbb{E})$.

A Radon point measure is an element of $M_{+}(\mathbb{E})$ of the form $m = \sum_{i}\delta_{x_{i}}$, where $\delta_{x}$ is the Dirac measure. Denote by $M_{p}(\mathbb{E})$ the class of all Radon point measures. Since $M_{p}(\mathbb{E})$ is a subset of $M_{+}(\mathbb{E})$, we endow it with the relative topology. Let $\mathcal{M}_{p}(\mathbb{E})$ be the Borel $\sigma$--field of subsets of $M_{p}(\mathbb{E})$ generated by open sets. A point process on $\mathbb{E}$ is a measurable map from a given probability space to the measurable space $(M_{p}(\mathbb{E}), \mathcal{M}_{p}(\mathbb{E}))$. A standard example of point process is the Poisson process. Suppose $\mu$ is a given Radon measure on $\mathbb{E}$. Then $\xi$ is a Poisson process with intensity measure $\mu$, or synonymously, a Poisson random measure ($\mathrm{PRM}(\mu)$), if for all $A \in \mathcal{B}(\mathbb{E})$ and $k=0,1,\ldots$ it holds that
\begin{equation*}
 \mathrm{P}[\xi(A)=k] = \left\{
                          \begin{array}{cc}
                            \displaystyle \frac{[\mu(A)]^{k}}{k!} e^{-\mu(A)}, & \textrm{if} \ \mu(A) < \infty, \\[0.6em]
                            0, & \textrm{if} \ \mu(A) = \infty, \\
                          \end{array}
                        \right.
 \end{equation*}
 and if $A_{1}, \ldots, A_{k} \in \mathcal{B}(\mathbb{E})$ are mutually disjoint, then $\xi(A_{1}), \ldots, \xi(A_{k})$ are independent random variables.

A sequence of point processes $(\xi_{n})_{n}$ on $\mathbb{E}$ converges in distribution to a point process $\xi$ on $\mathbb{E}$ (written $\xi_{n} \xrightarrow{d} \xi$) if $\mathrm{E} f(\xi_{n}) \to \mathrm{E} f(\xi)$ for every bounded continuous function $f \colon M_{p}(\mathbb{E}) \to \mathbb{R}$. In dealing with distributions of point processes a very useful transform technique is the Laplace functional, since the point process convergence is characterized by convergence of Laplace functionals.
Denote by $\mathcal{B}_{+}$ the set of bounded measurable functions $f \colon \mathbb{E} \to [0,\infty)$. For a point process $\xi$ on $\mathbb{E}$ the Laplace functional of $\xi$ is the nonnegative function on $\mathcal{B}_{+}$ given by
 \begin{equation*}
  \Psi_{\xi}(f) = \mathrm{E} e^{-\xi(f)}, \qquad f \in \mathcal{B}_{+},
 \end{equation*}
 where $\xi(f) =  \int_{\mathbb{E}} f(x) \xi(\rmd x)$.
Then the Laplace functional of a point process $\xi$ uniquely determines the distribution of $\xi$, and given a sequence of point processes $(\xi_{n})_{n}$,
 \begin{equation}\label{e:ppLf}
    \xi_{n} \xrightarrow{d} \xi \qquad \textrm{iff} \qquad \Psi_{\xi_{n}}(f) \to \Psi_{\xi}(f) \quad \textrm{for all} \ f \in C_{K}^{+}(\mathbb{E})
 \end{equation}
  (see Kallenberg~\cite{Ka83}, Theorems 3.1 and 4.2). The Laplace functional of a Poisson process $\xi$ with intensity measure $\mu$ is of the form
\begin{equation}\label{e:LapPP}
   \Psi_{\xi}(f) = \exp \bigg\{ - \int_{\mathbb{E}} (1- e^{-f(x)})\,\mu(\rmd x) \bigg\}, \qquad f \in \mathcal{B}_{+}
\end{equation}
(see Resnick~\cite{Re87}, Proposition 3.6).

Now let $(Z_{i})_{i \in \mathbb{Z}}$ be a strictly stationary sequence of random variables with regularly varying tail probabilities as specified by (\ref{e:regvarpr1}) and (\ref{e:pq}). Let $(a_{n})_{n}$ be a sequence of positive real numbers tending to infinity such that
\begin{equation}\label{e:seqan}
n \Pr (|Z_{1}|>a_{n}) \to 1 \qquad \textrm{as} \ n \to \infty,
\end{equation}
for example we can take $a_{n}$ to be the $(1-n^{-1})$--quantile of the distribution function of $|Z_{1}|$. This and relation (\ref{e:regvarpr1}) then imply
\begin{equation}\label{e:seqan1}
 n \Pr(|Z_{1}| > u a_{n})=u^{-\alpha} \qquad \textrm{for all} \ u>0.
\end{equation}
Assume $(Z_{i})$ satisfies the local dependence condition $D'$:
\begin{equation}\label{e:D'cond}
 \lim_{k \to \infty} \limsup_{n \to \infty}~n \sum_{i=2}^{\lfloor n/k \rfloor} \Pr \bigg( \frac{|Z_{1}|}{a_{n}} > x, \frac{|Z_{i}|}{a_{n}} >x \bigg) = 0 \qquad \textrm{for all} \ x >0,
 \end{equation}
 where $\lfloor x \rfloor$ represents the greatest integer not larger than $x$.
 Although condition $D'$ is in general not satisfied even by $m$--dependent sequences, various stationary sequences satisfy (\ref{e:D'cond}), including processes which are instantaneous functions of stationary Gaussian processes with covariance function $r_{n}$ behaving like $r_{n} \ln n \to 0$ as $n \to \infty$ (see Davis~\cite{Da83}). Other examples of time series that satisfy condition $D'$, related to stochastic volatility models and ARMAX processes, can be found in Davis and Mikosch~\cite{DaMi09} and Ferreira and Canto e Castro~\cite{FeCa08}. This condition, together with the strong mixing property, assures that, as in the i.i.d.~case, the extremes of the sequence $(Z_{i})$ are isolated. This means that in this case the extremal index of the sequence $(Z_{i})$, which can be interpreted as the reciprocal mean cluster size of large exceedances, is equal to $1$. It is straightforward to see that condition $D'$ implies
 \begin{equation}\label{e:D'condrn}
 \lim_{n \to \infty}~n \sum_{i=2}^{r_{n}} \Pr \bigg( \frac{|Z_{1}|}{a_{n}} > x, \frac{|Z_{i}|}{a_{n}} >x \bigg) = 0 \qquad \textrm{for all} \ x >0,
 \end{equation}
 where $(r_{n})_{n}$ is any sequence of positive integers such that $r_{n} \to \infty$ and $r_{n}/n \to 0$ as $n \to \infty$.

Let $(\sigma_{i})_{i \in \mathbb{Z}}$ be a sequence of positive real numbers such that for some $\beta_{1}, \beta_{2} >0$
\begin{equation}\label{e:scaleparam1}
0 < \beta_{1} \leq \sigma_{i} \leq \beta_{2} < \infty \qquad \textrm{for all} \ i \in \mathbb{Z},
\end{equation}
and
\begin{equation}\label{e:scaleparam2}
\frac{1}{n} \sum_{i=1}^{n}\sigma_{i}^{\alpha} \to \sigma^{\alpha} >0 \qquad \textrm{as} \ n \to \infty.
\end{equation}
Set
\begin{equation}\label{e:defYi}
Y_{i} = \sigma_{i}Z_{i} \qquad \textrm{for all} \  i \in \mathbb{Z}.
\end{equation}
Assume there exists a sequence of positive integers $(r_{n})_{n}$ with $r_{n} \to \infty $ and $r_{n} / n \to 0$ as $n \to \infty$, such that the following mixing condition $\mathcal{A}'$ holds: for all $f \in C_{K}^{+}([0,1] \times \mathbb{E})$, as $n \to \infty$,
\begin{equation}\label{e:mixcon}
 \E \biggl[ \exp \biggl\{ - \sum_{i=1}^{n} f \biggl(\frac{i}{n}, \frac{Y_{i}}{a_{n}}
 \biggr) \biggr\} \biggr]
 - \prod_{k=1}^{k_{n}} \E \biggl[ \exp \biggl\{ - \sum_{i=1}^{r_{n}} f \biggl(\frac{kr_{n}}{n}, \frac{Y_{i+(k-1)r_{n}}}{a_{n}} \biggr) \biggr\} \biggr] \to 0,
\end{equation}
 where $k_{n} = \lfloor n / r_{n} \rfloor$. Condition $\mathcal{A}'$, roughly speaking, allows us to break the sequence $(Y_{i})$ into increasing and asymptotically independent blocks (cf.~Basrak et al.~\cite{BKS} for $\mathcal{A}'$ in the stationary case). This condition holds under the strong mixing property (see Lemma~\ref{l:A'strongmix} in Appendix A).

\begin{lem}\label{l:lemma1}
Let $(Z_{i})_{i \in \mathbb{Z}}$ be a strictly stationary sequence of random variables with regularly varying tail probabilities as specified by $(\ref{e:regvarpr1})$ and $(\ref{e:pq})$. Let $(\sigma_{i})_{i \in \mathbb{Z}}$ be a sequence of positive real numbers satisfying $(\ref{e:scaleparam1})$ and $(\ref{e:scaleparam2})$. Then
$$ \sum_{j=1}^{n} \delta_{j/n}(\,\cdot\,) \Pr ( a_{n}^{-1}Y_{j} \in\,\cdot\,) \vto Leb \times \nu \qquad \textrm{as} \ n \to \infty,$$
in $M_{+}([0,1] \times \mathbb{E})$,
 where
$$ \nu(\rmd x) = \sigma^{\alpha} \bigl( p \, 1_{(0, \infty)}(x) + q \, 1_{(-\infty, 0)}(x) \bigr) \, \alpha |x|^{-\alpha-1} \, \rmd x.$$
\end{lem}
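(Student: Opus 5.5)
It suffices to check convergence on a convergence-determining class of relatively compact sets. Let $\lambda_{n} = \sum_{j=1}^{n} \delta_{j/n} \times \Pr(a_{n}^{-1}Y_{j} \in \cdot\,)$. By the standard criterion for vague convergence (Resnick~\cite{Re87}, Proposition 3.12), it is enough to show
$$\lambda_{n}\bigl((s,t] \times (x,\infty]\bigr) \to (t-s)\,\nu(x,\infty] \quad\textrm{and}\quad \lambda_{n}\bigl((s,t] \times [-\infty,-x)\bigr) \to (t-s)\,\nu[-\infty,-x)$$
for all $0 \le s < t \le 1$ and $x>0$. Rectangles of this type, with finite unions, form a DC-semiring of $\nu$- and Lebesgue-continuity sets generating the topology on $[0,1]\times\mathbb{E}$. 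Sets containing $\{0\}$ in time carry vanishing mass and can be handled in the same way. I will treat the positive half; the negative half is symmetric, with $q$ in place of $p$.

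The first step is to write
$$\lambda_{n}\bigl((s,t]\times(x,\infty]\bigr) = \sum_{j=\lfloor ns\rfloor+1}^{\lfloor nt\rfloor} \Pr(Z_{1} > x a_{n}/\sigma_{j}),$$
using stationarity of $(Z_i)$. By (\ref{e:regvarpr1}), (\ref{e:pq}) and (\ref{e:seqan}), $n\Pr(Z_{1} > u a_{n}) \to p u^{-\alpha}$ for each $u>0$. Since the map $u \mapsto u^{-\alpha}$ is continuous and monotone, and both sides are monotone in $u$, this convergence is uniform on compact subsets of $(0,\infty)$. This is the standard uniform convergence theorem for regularly varying functions, or a Pólya/Dini-type argument.

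The second step uses (\ref{e:scaleparam1}): all $u_j = x/\sigma_j$ lie in the compact interval $[x/\beta_2, x/\beta_1]$. Hence
$$\sup_{j}\bigl| n\Pr(Z_1 > x a_n/\sigma_j) - p\,x^{-\alpha}\sigma_j^{\alpha}\bigr| \to 0.$$
It follows that
$$\lambda_{n}\bigl((s,t]\times(x,\infty]\bigr) = \frac{1}{n}\sum_{j=\lfloor ns\rfloor+1}^{\lfloor nt\rfloor} p\,x^{-\alpha}\sigma_j^{\alpha} + o(1),$$
where the error is at most $(t-s+1/n)\,o(1)$.

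The third step computes the Cesàro average. By (\ref{e:scaleparam2}),
$$\frac{1}{n}\sum_{j=\lfloor ns\rfloor+1}^{\lfloor nt\rfloor}\sigma_j^{\alpha} = \frac{\lfloor nt\rfloor}{n}\cdot\frac{1}{\lfloor nt\rfloor}\sum_{j=1}^{\lfloor nt\rfloor}\sigma_j^\alpha - \frac{\lfloor ns\rfloor}{n}\cdot\frac{1}{\lfloor ns\rfloor}\sum_{j=1}^{\lfloor ns\rfloor}\sigma_j^\alpha \to (t-s)\sigma^{\alpha}.$$
If $s=0$, the second term is simply absent. The limit is therefore $(t-s)\,p\,\sigma^\alpha x^{-\alpha} = (t-s)\,\nu(x,\infty]$, as required.

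The main obstacle is the uniformity in $j$ in the second step, since the thresholds $x a_n/\sigma_j$ vary with $j$. The two-sided bound (\ref{e:scaleparam1}) is exactly what reduces this to uniform convergence on a compact set. If one prefers to avoid citing the uniform convergence theorem, an alternative is to partition $[\beta_1,\beta_2]$ into finitely many small intervals. One then sandwiches $\Pr(Z_1 > x a_n/\sigma_j)$ between the values at the interval endpoints, using monotonicity, and lets the mesh go to zero.
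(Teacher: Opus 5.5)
Your proposal is correct and follows essentially the same route as the paper: you reduce vague convergence to rectangles, use stationarity to write the mass as $\sum_j \Pr(Z_1 > x a_n/\sigma_j)$, use the two-sided bound (\ref{e:scaleparam1}) to get convergence uniformly over the compact range $[x/\beta_2, x/\beta_1]$, and finish with the Ces\`aro limit (\ref{e:scaleparam2}). The only difference is cosmetic. You use additive uniform convergence of $n\Pr(Z_1>ua_n)\to pu^{-\alpha}$ via a P\'olya/Dini monotonicity argument, which covers $p=0$ automatically, whereas the paper uses the uniform convergence theorem for the slowly varying function $y^{\alpha}\Pr(Z_1>y)$ with a sup/inf sandwich and treats $p=0$ separately.
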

\begin{proof}
Since $\mu_{n} \vto \mu$ iff $\mu_{n}(B) \to \mu(B)$ for all relatively compact sets $B$ such that $\mu(\partial B)=0$ (see Resnick~\cite{Re86}, Proposition 3.12), it suffices to show that, as $n \to \infty$,
\begin{equation}\label{e:conv1}
 \sum_{j=1}^{n} \delta_{j/n}([0, t)) \Pr ( a_{n}^{-1}Y_{j} \in (x, \infty]) \to Leb \times \nu\,([0,t) \times (x,\infty]) = t \nu((x,\infty])
\end{equation}
and
\begin{equation}\label{e:conv2}
 \sum_{j=1}^{n} \delta_{j/n}([0, t)) \Pr ( a_{n}^{-1}Y_{j} \in [-\infty, -x)) \to t \nu([-\infty, -x))
\end{equation}
for all $t \in (0,1]$ and $x>0$. We show only (\ref{e:conv1}) since (\ref{e:conv2}) can be shown similarly.

Assume first $p \neq 0$.
By stationarity of the sequence $(Z_{i})$ it holds that
\begin{equation}\label{e:aux1}
\sum_{j=1}^{n} \delta_{j/n}([0, t)) \Pr ( a_{n}^{-1}Y_{j} \in (x, \infty]) = \sum_{j=1}^{\floor{nt}} \Pr \Big(\frac{Y_{j}}{a_{n}} >x \Big) = \sum_{j=1}^{\floor{nt}} \Pr \Big( Z_{1} > \frac{a_{n}x}{\sigma_{j}} \Big),
\end{equation}
where $\floor{nt}$ denotes the integer part of $nt$.
Relations (\ref{e:regvarpr1}) and (\ref{e:pq}) imply the function
$$ f(y) = \Pr(Z_{1} > y)\,y^{\alpha} , \qquad y>0,$$
is slowly varying, since for $\lambda >0$
\begin{eqnarray*}
\lim_{y \to \infty} \frac{f(\lambda y)}{f(y)} &=& \lambda^{\alpha} \lim_{y \to \infty} \frac{\Pr(Z_{1} > \lambda y)}{\Pr(|Z_{1}|> \lambda y)} \cdot \frac{\Pr(|Z_{1}| > \lambda y)}{\Pr(|Z_{1}|> y)} \cdot \frac{\Pr(|Z_{1}| > y)}{\Pr(Z_{1}> y)}\\[0.4em]
 &=& \lambda^{\alpha} \cdot p \cdot \lambda^{-\alpha} \cdot \frac{1}{p}=1.
\end{eqnarray*}
Hence Theorem 1.5.2 in Bingham et al.~\cite{BiGoTe89} implies $f(\lambda y) /f(y) \to 1 $ (as $y \to \infty$) uniformly in $\lambda$ on each segment $[a,b]$ ($0 < a \leq b < \infty$). In particular
$$ F_{n}(\lambda) :=\frac{\Pr(Z_{1} > \lambda a_{n})\,\lambda^{\alpha}}{\Pr(Z_{1}>a_{n})} \to 1 \ \ (\textrm{as} \ n \to \infty) \qquad \textrm{uniformly in} \ \lambda \ \textrm{on} \ \Big[ \frac{x}{\beta_{2}}, \frac{x}{\beta_{1}} \Big],$$
from which, taking into account relation (\ref{e:scaleparam1}) we obtain
\begin{equation}\label{e:supinf}
 \sup_{j \geq 1} F_{n} \Big( \frac{x}{\sigma_{j}} \Big) \to 1 \quad \textrm{and} \quad \inf_{j \geq 1} F_{n} \Big( \frac{x}{\sigma_{j}} \Big) \to 1 \qquad \textrm{as} \ n \to \infty.
\end{equation}
Since
\begin{eqnarray*}
\sum_{j=1}^{\floor{nt}} \Pr \Big( Z_{1} > \frac{a_{n}x}{\sigma_{j}} \Big) &=& \sum_{j=1}^{\floor{nt}} F_{n} \Big(\frac{x}{\sigma_{j}}\Big) \Pr(Z_{1}>a_{n}) \Big( \frac{x}{\sigma_{j}}\Big)^{-\alpha}
  \leq x^{-\alpha} \sup_{j \geq 1} F_{n} \Big( \frac{x}{\sigma_{j}} \Big) \Pr(Z_{1}>a_{n})  \sum_{j=1}^{\floor{nt}}  \sigma_{j}^{\alpha}\\[0.4em]
 & = & x^{-\alpha}  \sup_{j \geq 1} F_{n} \Big( \frac{x}{\sigma_{j}} \Big) \frac{\Pr(Z_{1}>a_{n})}{\Pr(|Z_{1}|>a_{n})}\,n \Pr(|Z_{1}|>a_{n})  \frac{\floor{nt}}{n} \bigg( \frac{1}{\floor{nt}}\sum_{j=1}^{\floor{nt}}  \sigma_{j}^{\alpha} \bigg),
\end{eqnarray*}
 relations (\ref{e:supinf}), (\ref{e:pq}), (\ref{e:seqan}) and (\ref{e:scaleparam2}) yield
$$ \limsup_{n \to \infty} \sum_{j=1}^{\floor{nt}} \Pr \Big( Z_{1} > \frac{a_{n}x}{\sigma_{j}} \Big) \leq x^{-\alpha} p\,t \sigma^{\alpha},$$
and similarly
$$ \liminf_{n \to \infty} \sum_{j=1}^{\floor{nt}} \Pr \Big( Z_{1} > \frac{a_{n}x}{\sigma_{j}} \Big) \geq x^{-\alpha} p\,t \sigma^{\alpha}.$$
Therefore
$$ \lim_{n \to \infty} \sum_{j=1}^{\floor{nt}} \Pr \Big( Z_{1} > \frac{a_{n}x}{\sigma_{j}} \Big) = x^{-\alpha} p\,t \sigma^{\alpha} = t \nu ((x,\infty]).$$
This, together with (\ref{e:aux1}), yields (\ref{e:conv1}).

 In the case when $p=0$ note that by (\ref{e:scaleparam1}), (\ref{e:pq}) and (\ref{e:seqan1}) we have
 $$ \sum_{j=1}^{\floor{nt}} \Pr \Big( Z_{1} > \frac{a_{n}x}{\sigma_{j}} \Big) \leq \floor{nt} \Pr \Big( Z_{1} > \frac{a_{n}x}{\beta_{2}} \Big) \to 0 \quad \textrm{as} \ n \to \infty,$$
 yielding
 $$ \lim_{n \to \infty}  \sum_{j=1}^{\floor{nt}} \Pr \Big( Z_{1} > \frac{a_{n}x}{\sigma_{j}} \Big) = 0, $$
 and (\ref{e:conv1}) again holds (since in this case $\nu((x,\infty])=0$).
\end{proof}

\begin{lem}\label{l:lemma2}
Let $(Z_{i})_{i \in \mathbb{Z}}$ be a strictly stationary sequence of random variables satisfying condition $(\ref{e:D'cond})$ and with regularly varying tail probabilities as specified by $(\ref{e:regvarpr1})$ and $(\ref{e:pq})$. Let $(\sigma_{i})_{i \in \mathbb{Z}}$ be a sequence of positive real numbers satisfying $(\ref{e:scaleparam1})$ and $(\ref{e:scaleparam2})$. Assume the mixing condition $(\ref{e:mixcon})$ holds. Then for every $f \in C_{K}^{+}([0,1] \times \mathbb{E})$, as $n \to \infty$:
\begin{itemize}
\item[(i)] $\displaystyle \sup_{1 \leq k \leq k_{n}} \bigg| \E \biggl[ \exp \biggl\{ - \sum_{i=1}^{r_{n}} f \biggl(\frac{kr_{n}}{n}, \frac{Y_{i+(k-1)r_{n}}}{a_{n}} \biggr) \biggr\} -1 \biggr] \bigg| \to 0,$\\[0.6em]
\item[(ii)] $\displaystyle \sum_{k=1}^{k_{n}} \E \biggl[ \exp \biggl\{ - \sum_{i=1}^{r_{n}} f \biggl(\frac{kr_{n}}{n}, \frac{Y_{i+(k-1)r_{n}}}{a_{n}} \biggr) \biggr\} -1 \biggr] \to \iint_{[0,1] \times \mathbb{E}} (e^{-f(t,x)}-1)\,\rmd t \nu(\rmd x)$,
\end{itemize}
where $\nu$ is the measure defined in Lemma~$\ref{l:lemma1}$, and $(r_{n})_{n}$ and $(k_{n})_{n}$ are sequences that appear in condition $\mathcal{A}'$.
\end{lem}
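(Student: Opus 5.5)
Both parts rest on the fact that $f$ has compact support. Fix $f \in C_{K}^{+}([0,1] \times \mathbb{E})$. Then there is $u>0$ with $f(t,x)=0$ whenever $|x| \le u$, and $f \le \|f\|_\infty$. Write $I_{k,n} = \{(k-1)r_n+1,\dots,kr_n\}$. Since $1-e^{-y}\le \min(y,1)$ and the summand vanishes unless $|Y_i|>ua_n$, we have
$$0 \le 1-\E e^{-\sum_{i\in I_{k,n}} f(kr_n/n, Y_i/a_n)} \le \Pr\Big(\max_{i\in I_{k,n}}|Y_i|>ua_n\Big)\le \sum_{i\in I_{k,n}}\Pr\Big(|Z_1|>\frac{ua_n}{\beta_2}\Big)= r_n \Pr\Big(|Z_1|>\frac{ua_n}{\beta_2}\Big).$$
The last expression is uniform in $k$. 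By (\ref{e:seqan1}) it is of order $(r_n/n)(u/\beta_2)^{-\alpha}\to 0$, which proves (i).

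For (ii), I linearize the exponential inside each block. Put $S_k=\sum_{i\in I_{k,n}} f(kr_n/n,Y_i/a_n)$. On the event $A_k=\{\#\{i\in I_{k,n}: |Y_i|>ua_n\}\le 1\}$ we have exactly $e^{-S_k}-1=\sum_{i\in I_{k,n}}(e^{-f(kr_n/n,Y_i/a_n)}-1)$. On $A_k^c$ both sides are bounded by $1$ and by $r_n$ times indicators, so the difference is at most $2\,\#\{i: |Y_i|>ua_n\}\,1_{A_k^c}$. I estimate its expectation by a union bound over pairs. Using $|Y_i|>ua_n \Rightarrow |Z_i|>ua_n/\beta_2$ and stationarity, the total error summed over $k$ is bounded by a constant times
$$k_n\, r_n \sum_{i=2}^{r_n}\Pr\Big(|Z_1|>\frac{ua_n}{\beta_2},|Z_i|>\frac{ua_n}{\beta_2}\Big)\le n\sum_{i=2}^{r_n}\Pr(\cdots).$$
This tends to $0$ by (\ref{e:D'condrn}) with $x=u/\beta_2$. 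This is the main step. The one subtlety is that the count on $A_k^c$ is not just $2$; I will handle it by bounding $N 1_{N\ge2}\le N(N-1)$, which is exactly a sum over ordered pairs in the block. Each pair $(i,i')$ is then controlled by the joint probability with lag $|i-i'|<r_n$, giving at most $2r_n\sum_{i=2}^{r_n}$ per block.

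It then remains to show that
$$\sum_{k=1}^{k_n}\sum_{i\in I_{k,n}}\E\big[e^{-f(kr_n/n,Y_i/a_n)}-1\big]\to\iint(e^{-f}-1)\,\rmd t\,\nu(\rmd x).$$
First I replace $kr_n/n$ by $i/n$. Since $f$ is uniformly continuous and $|kr_n/n-i/n|\le r_n/n\to0$, the change is at most $\omega_f(r_n/n)\sum_{i\le n}\Pr(|Y_i|>ua_n)$, and the sum is bounded by $n\Pr(|Z_1|>ua_n/\beta_2)=O(1)$. The indices between $k_nr_n$ and $n$ number fewer than $r_n$, so they contribute $O(r_n\Pr(|Z_1|>ua_n/\beta_2))\to0$. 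What is left is $\int g\,\rmd\mu_n$, where $g=e^{-f}-1$ and $\mu_n=\sum_j\delta_{j/n}\Pr(a_n^{-1}Y_j\in\cdot)$. The function $g$ is continuous, bounded, and supported in the relatively compact set $[0,1]\times\{|x|>u\}$, so Lemma~\ref{l:lemma1} gives $\int g\,\rmd\mu_n\to\iint g\,\rmd t\,\nu(\rmd x)$, which completes (ii).
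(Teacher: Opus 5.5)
Your proof is correct. For (ii) it is essentially the paper's argument: your $A_k^c$ is the paper's $H_{3}^{n,k}$, and your bound $N1_{\{N\ge 2\}}\le N(N-1)$ packages the paper's separate estimates via $\widetilde{H}_{2}^{n,k}(i)$ and $H_{3}^{n,k}$; the time shift $kr_n/n\mapsto i/n$, the removal of indices $k_nr_n<j\le n$, and the appeal to Lemma~\ref{l:lemma1} are the same as in the paper. For (i) your route is shorter: the paper first linearizes $T_{n,k}(f)-1$ using $D'$ and then bounds the linear term, whereas your direct bound $1-\E e^{-S_k}\le r_n\Pr(|Z_1|>ua_n/\beta_2)$ shows that (i) needs only regular variation and $r_n/n\to 0$, not $D'$.
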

\begin{proof}
(i) Take an arbitrary $f \in C_{K}^{+}([0,1] \times \mathbb{E})$. The support of $f$ is bounded away from origin, which implies
$f(t,x)=0$ for all $t \in [0,1]$ and $|x| \leq u$
 for some $u>0$. For all $n \in \mathbb{N}$ and $k=1,\ldots, k_{n}$ define
$$ T_{n,k}(f) =  \E \bigg[ \exp \biggl\{ - \sum_{i=1}^{r_{n}} f \biggl(\frac{kr_{n}}{n}, \frac{Y_{i+(k-1)r_{n}}}{a_{n}} \biggr) \biggr\} \bigg],$$
and
\begin{eqnarray*}
H_{1}^{n,k} &=& \bigg\{ \frac{|Y_{1+(k-1)r_{n}}|}{a_{n}} \leq u, \frac{|Y_{2+(k-1)r_{n}}|}{a_{n}} \leq u, \ldots, \frac{|Y_{r_{n}+(k-1)r_{n}}|}{a_{n}} \leq u \bigg\},\\[0.4em]
H_{2}^{n,k} &=& \bigg\{ \exists\,i \in \{1,\ldots,r_{n}\} \ \textrm{s.t.} \ \frac{|Y_{i+(k-1)r_{n}}|}{a_{n}} > u \ \textrm{and} \ \frac{|Y_{j+(k-1)r_{n}}|}{a_{n}} \leq u
\ \textrm{for all} \ j \in \{1,\ldots,r_{n}\} \setminus \{i\} \bigg\},\\[0.4em]
H_{3}^{n,k} &=& \bigg\{ \exists\,i, j \in \{1,\ldots,r_{n}\}, i \neq j, \ \textrm{s.t.} \ \frac{|Y_{i+(k-1)r_{n}}|}{a_{n}} > u \ \textrm{and} \
            \frac{|Y_{j+(k-1)r_{n}}|}{a_{n}} > u \bigg\}.
\end{eqnarray*}
Note that $H_{1}^{n,k}, H_{2}^{n,k}$ and $H_{3}^{n,k}$ are mutually disjoint events for which it holds that $1_{H_{1}^{n,k}} + 1_{H_{2}^{n,k}}+ 1_{H_{3}^{n,k}} =1$, where $1_{A}$ denotes the indicator function of a set $A$.
Denoting
$$H_{2}^{n,k}(i) =  \bigg\{\frac{|Y_{i+(k-1)r_{n}}|}{a_{n}} > u, \ \textrm{and} \ \frac{|Y_{j+(k-1)r_{n}}|}{a_{n}} \leq u \ \textrm{for all} \ j \in \{1,\ldots,r_{n}\} \setminus \{i\} \bigg\},$$
it holds that $H_{2}^{n,k}$ is a disjoint union of the sets $H_{2}^{n,k}(i)$ ($i=1,\ldots,r_{n}$), and hence we have\\[-0.4em]
\begin{eqnarray*}
\nonumber  T_{n,k}(f) -1\\[0.2em]
 \nonumber & \hspace*{-11em} =&  \hspace*{-5.5em} \E \biggl[ 1_{H_{1}^{n,k}} + \sum_{i=1}^{r_{n}} \exp  \biggl\{ - f \biggl(\frac{kr_{n}}{n}, \frac{Y_{i+(k-1)r_{n}}}{a_{n}} \biggr)
                \biggr\} 1_{H_{2}^{n,k}(i)}
  + \exp \biggl\{ - \sum_{i=1}^{r_{n}} f \biggl(\frac{kr_{n}}{n}, \frac{Y_{i+(k-1)r_{n}}}{a_{n}} \biggr) \biggr\} 1_{H_{3}^{n,k}} -1 \biggr]\\[0.5em]
 \nonumber &\hspace*{-11em} =& \hspace*{-5.5em} \sum_{i=1}^{r_{n}} \E \biggl[  \bigg( \exp \biggl\{ - f \biggl(\frac{kr_{n}}{n}, \frac{Y_{i+(k-1)r_{n}}}{a_{n}} \biggr)
                \biggr\} -1 \bigg) 1_{H_{2}^{n,k}(i)} \bigg]
            + \E \biggl[ \bigg( \exp \biggl\{ - \sum_{i=1}^{r_{n}} f \biggl(\frac{kr_{n}}{n}, \frac{Y_{i+(k-1)r_{n}}}{a_{n}} \biggr) \biggr\} -1 \bigg) 1_{H_{3}^{n,k}} \biggr].
\end{eqnarray*}
Define
$$ W_{1}^{n,k,i}(f) =   \exp \biggl\{ - f \biggl(\frac{kr_{n}}{n}, \frac{Y_{i+(k-1)r_{n}}}{a_{n}} \biggr)
                \biggr\} -1,$$
and
$$W_{2}^{n,k}(f)=  \exp \biggl\{ - \sum_{i=1}^{r_{n}} f \biggl(\frac{kr_{n}}{n}, \frac{Y_{i+(k-1)r_{n}}}{a_{n}} \biggr) \biggr\} -1 .$$
Then
\begin{equation}\label{e:Tn1}
  T_{n,k}(f) -1 = \sum_{i=1}^{r_{n}} \E \Big[ W_{1}^{n,k,i}(f)1_{H_{2}^{n,k}(i)} \Big] + \E \Big[ W_{2}^{n,k}(f) 1_{H_{3}^{n,k}} \Big].
\end{equation}
Note that
\begin{equation}\label{e:Tn2}
 1_{H_{2}^{n,k}(i)} = 1_{\big\{ \frac{|Y_{i}+(k-1)r_{n}|}{a_{n}} > u \big\}} - 1_{\widetilde{H}_{2}^{n,k}(i)} = 1 -  1_{\big\{ \frac{|Y_{i}+(k-1)r_{n}|}{a_{n}} \leq u \big\}} - 1_{\widetilde{H}_{2}^{n,k}(i)},
\end{equation}
where
$$\widetilde{H}_{2}^{n,k}(i) =  \bigg\{\frac{|Y_{i+(k-1)r_{n}}|}{a_{n}} > u, \ \textrm{and there} \ \exists \ j \in \{1,\ldots, r_{n}\} \setminus \{i\} \ \textrm{s.t.} \ \frac{|Y_{j+(k-1)r_{n}}|}{a_{n}} > u \bigg\}.$$
Then
\begin{equation}\label{e:Tn3}
 \E \Big[ W_{1}^{n,k,i}(f) 1_{\big\{ \frac{|Y_{i}+(k-1)r_{n}|}{a_{n}} \leq u \big\}} \Big]=0,
\end{equation}
and
\begin{equation}\label{e:Tn4}
 \Big| \E \Big[  W_{1}^{n,k,i}(f) 1_{\widetilde{H}_{2}^{n,k}(i)} \Big] \Big| \leq 2 \Pr(\widetilde{H}_{2}^{n,k}(i)),
\end{equation}
where we used the fact that for nonnegative function $f$ it holds that $|e^{-f(t,x)}-1| \leq 2$ for all $t \in [0,1]$ and $x \in \mathbb{E}$.
Similarly
\begin{equation}\label{e:Tn5}
 \Big| \E \Big[  W_{2}^{n,k}(f) 1_{H_{3}^{n,k}} \Big] \Big| \leq 2 \Pr(H_{3}^{n,k}).
\end{equation}
From the definition of the event $\widetilde{H}_{2}^{n,k}(i)$ and taking into account condition (\ref{e:scaleparam1}) it follows that
\begin{eqnarray*}
 \Pr(\widetilde{H}_{2}^{n,k}(i)) &\leq& \sum_{\scriptsize \begin{array}{l}
                          j=1  \\[0.1em]
                          j \neq i
                        \end{array}}^{r_{n}} \Pr \bigg( \frac{|Y_{i+(k-1)r_{n}}|}{a_{n}} > u, \frac{|Y_{j+(k-1)r_{n}}|}{a_{n}} >u  \bigg)\\[0.4em]
   &=& \sum_{\scriptsize \begin{array}{l}
                          j=1  \\[0.1em]
                          j \neq i
                        \end{array}}^{r_{n}} \Pr \bigg( \frac{\sigma_{i+(k-1)r_{n}}|Z_{i+(k-1)r_{n}}|}{a_{n}} > u, \frac{\sigma_{j+(k-1)r_{n}}|Z_{j+(k-1)r_{n}}|}{a_{n}} >u  \bigg)\\[0.4em]
   & \leq & \sum_{\scriptsize \begin{array}{l}
                          j=1  \\[0.1em]
                          j \neq i
                        \end{array}}^{r_{n}} \Pr \bigg( \frac{|Z_{i+(k-1)r_{n}}|}{a_{n}} > \frac{u}{\beta_{2}}, \frac{|Z_{j+(k-1)r_{n}}|}{a_{n}} > \frac{u}{\beta_{2}}  \bigg),
\end{eqnarray*}
and by stationarity of the sequence $(Z_{i})$ we get
\begin{equation}\label{e:Tn3-i}
 \sum_{i=1}^{r_{n}}  \Pr(\widetilde{H}_{2}^{n,k}(i)) \leq \frac{2r_{n}}{n}\,n \sum_{j=2}^{r_{n}} \Pr \bigg( \frac{|Z_{1}|}{a_{n}} > \frac{u}{\beta_{2}}, \frac{|Z_{j}|}{a_{n}} > \frac{u}{\beta_{2}}  \bigg).
\end{equation}
Note that the quantity on the right-hand side in the previous relation does not depend on $k$. This, together with condition (\ref{e:D'cond}) (i.e.~relation (\ref{e:D'condrn})) and the fact that $r_{n}/n \to 0$ as $n \to \infty$ yields
\begin{equation*}
 \lim_{n \to \infty} \sum_{i=1}^{r_{n}}  \Pr(\widetilde{H}_{2}^{n,k}(i)) = 0 \qquad \textrm{uniformly in} \ k \in \{1,\ldots,k_{n}\}.
\end{equation*}
Therefore by (\ref{e:Tn4})
\begin{equation}\label{e:Tn6}
 \lim_{n \to \infty} \sum_{i=1}^{r_{n}}  \E \Big[  W_{1}^{n,k,i}(f) 1_{\widetilde{H}_{2}^{n,k}(i)} \Big]  = 0 \qquad \textrm{uniformly in} \ k \in \{1,\ldots,k_{n}\}.
\end{equation}
Similarly we have
\begin{equation*}
  \Pr(H_{3}^{n,k}) \leq   \sum_{i=1}^{r_{n}} \sum_{j=i+1}^{r_{n}}  \Pr \bigg( \frac{|Z_{i}|}{a_{n}} > \frac{u}{\beta_{2}}, \frac{|Z_{j}|}{a_{n}} > \frac{u}{\beta_{2}}  \bigg)
  \leq r_{n} \sum_{j=2}^{r_{n}} \Pr \bigg( \frac{|Z_{1}|}{a_{n}} > \frac{u}{\beta_{2}}, \frac{|Z_{j}|}{a_{n}} > \frac{u}{\beta_{2}}  \bigg),
\end{equation*}
and consequently by (\ref{e:Tn5})
\begin{equation}\label{e:Tn7}
 \lim_{n \to \infty}  \E \Big[  W_{2}^{n,k}(f) 1_{H_{3}^{n,k}} \Big] =0 \qquad \textrm{uniformly in} \ k \in \{1,\ldots,k_{n}\}.
\end{equation}
Therefore from (\ref{e:Tn1}) taking into account relations (\ref{e:Tn2}), (\ref{e:Tn3}), (\ref{e:Tn6}) and (\ref{e:Tn7}) we obtain
\begin{eqnarray}\label{e:Tn8}
  \nonumber T_{n,k}(f)-1 &=&  \sum_{i=1}^{r_{n}} \E \Big[ W_{1}^{n,k,i}(f) \Big] + o(1)\\[0.5em]
      &=&  \sum_{i=1}^{r_{n}} \E \biggl[ \exp \biggl\{ - f \biggl(\frac{kr_{n}}{n}, \frac{Y_{i+(k-1)r_{n}}}{a_{n}} \biggr) \biggr\} -1 \biggr] + o(1)
\end{eqnarray}
as $n \to \infty$ uniformly in $k \in \{1,\ldots,k_{n}\}$,
where $b_{n} = c_{n} + o(d_{n})$ as $n \to \infty$ means that $\lim_{n \to \infty}(b_{n}-c_{n}) /d_{n} = 0$.
Observe that
$$ \E \biggl[ \exp \biggl\{ - f \biggl(\frac{kr_{n}}{n}, \frac{Y_{i+(k-1)r_{n}}}{a_{n}} \biggr) \biggr\} \biggr] =  \int_{\mathbb{E}} e^{-f(kr_{n}/n,x)} \Pr \bigg( \frac{Y_{i+(k-1)r_{n}}}{a_{n}} \in \rmd x \bigg),$$
and hence
\begin{eqnarray*}
 \Big| \sum_{i=1}^{r_{n}} \E \Big[ W_{1}^{n,k,i}(f) \Big] \Big| &\leq&  \sum_{i=1}^{r_{n}} \int_{\mathbb{E}} |e^{-f(kr_{n}/n,x)}-1| \Pr \bigg( \frac{Y_{i+(k-1)r_{n}}}{a_{n}} \in \rmd x \bigg)\\[0.4em]
 &=& \sum_{i=1}^{r_{n}} \int_{|x| >u} |e^{-f(kr_{n}/n,x)}-1| \Pr \bigg( \frac{Y_{i+(k-1)r_{n}}}{a_{n}} \in \rmd x \bigg)\\[0.4em]
 & \leq & 2 \sum_{i=1}^{r_{n}} \Pr \bigg( \frac{|Y_{i+(k-1)r_{n}}|}{a_{n}} > u \bigg)
  \leq  2r_{n} \Pr \bigg( \frac{|Z_{1}|}{a_{n}} > \frac{u}{\beta_{2}} \bigg)\\[0.4em]
   &=& \frac{2r_{n}}{n}\,n \Pr \bigg( \frac{|Z_{1}|}{a_{n}} > \frac{u}{\beta_{2}} \bigg).
\end{eqnarray*}
From this, by relation (\ref{e:seqan1}) and the fact that $r_{n}/n \to 0$ as $n \to \infty$, it follows that
\begin{equation*}
\lim_{n \to \infty} \sum_{i=1}^{r_{n}} \E \Big[ W_{1}^{n,k,i}(f) \Big]=0 \qquad \textrm{uniformly in} \ k \in \{1,\ldots,k_{n}\}.
\end{equation*}
Therefore from (\ref{e:Tn8}) we obtain $T_{n,k}(f)-1 \to 0$ as $n \to \infty$ uniformly in $k$, i.e.
$$ \sup_{1 \leq k \leq k_{n}} \bigg| \E \biggl[ \exp \biggl\{ - \sum_{i=1}^{r_{n}} f \biggl(\frac{kr_{n}}{n}, \frac{Y_{i+(k-1)r_{n}}}{a_{n}} \biggr) \biggr\} -1 \biggr] \bigg| \to 0 \qquad \textrm{as} \ n \to \infty.\\[0.7em]$$

(ii) Take again an arbitrary $f \in C_{K}^{+}([0,1] \times \mathbb{E})$ with $f(t,x)=0$ for all $t \in [0,1]$ and $|x| \leq u$
 for some $u>0$. By (\ref{e:Tn1}) we have
 \begin{equation}\label{e:Tn-ii1}
  \sum_{k=1}^{k_{n}}(T_{n,k}(f) -1) = \sum_{k=1}^{k_{n}} \sum_{i=1}^{r_{n}} \E \Big[ W_{1}^{n,k,i}(f)1_{H_{2}^{n,k}(i)} \Big] + \sum_{k=1}^{k_{n}} \E \Big[ W_{2}^{n,k}(f) 1_{H_{3}^{n,k}} \Big].
\end{equation}
Relation (\ref{e:Tn3-i}), together with (\ref{e:D'condrn}) and the fact that $k_{n}r_{n} \leq n$, yields
$$\sum_{k=1}^{k_{n}} \sum_{i=1}^{r_{n}}  \Pr(\widetilde{H}_{2}^{n,k}(i)) \leq 2k_{n}r_{n} \sum_{j=2}^{r_{n}} \Pr \bigg( \frac{|Z_{1}|}{a_{n}} > \frac{u}{\beta_{2}}, \frac{|Z_{j}|}{a_{n}} > \frac{u}{\beta_{2}}  \bigg) \to 0 \qquad \textrm{as} \ n \to \infty,$$
and hence by (\ref{e:Tn4}) we get
\begin{equation}\label{e:Tn-ii2}
 \lim_{n \to \infty}   \sum_{k=1}^{k_{n}} \sum_{i=1}^{r_{n}} \E \Big[  W_{1}^{n,k,i}(f) 1_{\widetilde{H}_{2}^{n,k}(i)} \Big]  = 0.
\end{equation}
Similarly we have
\begin{equation*}
  \sum_{k=1}^{k_{n}}\Pr(H_{3}^{n,k}) \leq k_{n} r_{n} \sum_{j=2}^{r_{n}} \Pr \bigg( \frac{|Z_{1}|}{a_{n}} > \frac{u}{\beta_{2}}, \frac{|Z_{j}|}{a_{n}} > \frac{u}{\beta_{2}}  \bigg) \to 0 \qquad \textrm{as} \ n \to \infty,
\end{equation*}
and consequently by (\ref{e:Tn5})
\begin{equation}\label{e:Tn-ii3}
 \lim_{n \to \infty}  \sum_{k=1}^{k_{n}} \E \Big[  W_{2}^{n,k}(f) 1_{H_{3}^{n,k}} \Big] =0.
\end{equation}
Thus from (\ref{e:Tn-ii1}), taking into account relations (\ref{e:Tn2}), (\ref{e:Tn3}), (\ref{e:Tn-ii2}) and (\ref{e:Tn-ii3}), similarly as in (i) we obtain
\begin{eqnarray*}
  \nonumber \sum_{k=1}^{k_{n}} (T_{n,k}(f)-1) &=&  \sum_{k=1}^{k_{n}} \sum_{i=1}^{r_{n}} \E \Big[ W_{1}^{n,k,i}(f) \Big] + o(1)\\[0.5em]
      & =&  \sum_{k=1}^{k_{n}} \sum_{i=1}^{r_{n}} \E \biggl[ \exp \biggl\{ - f \biggl(\frac{kr_{n}}{n}, \frac{Y_{i+(k-1)r_{n}}}{a_{n}} \biggr) \biggr\} -1 \biggr] + o(1)
\end{eqnarray*}
as $n \to \infty$. Note that
\begin{equation}\label{e:Tn-ii4}
   \sum_{k=1}^{k_{n}} (T_{n,k}(f)-1)
     =G_{n} + \sum_{k=1}^{k_{n}} \sum_{i=1}^{r_{n}} \int_{\mathbb{E}} \Big(e^{-f((i+(k-1)r_{n})/n,x)} -1 \Big) \Pr \bigg( \frac{Y_{i+(k-1)r_{n}}}{a_{n}} \in \rmd x \bigg) + o(1),
\end{equation}
where
$$ G_{n} = \sum_{k=1}^{k_{n}} \sum_{i=1}^{r_{n}}  \int_{\mathbb{E}} \Big(e^{-f(kr_{n}/n,x)} - e^{-f((i+(k-1)r_{n})/n,x)} \Big) \Pr \bigg( \frac{Y_{i+(k-1)r_{n}}}{a_{n}} \in \rmd x \bigg).$$
Since for any $t>0$ there exists a constant $C_{t} >0$ such that $|1-e^{-x}| \leq C_{t}|x|$ for all $|x| \leq t$, we have
\begin{eqnarray*}
 |G_{n}| &  \leq & \sum_{k=1}^{k_{n}} \sum_{i=1}^{r_{n}} \int_{\mathbb{E}} \Big| e^{-f(kr_{n}/n,x)} (1 - e^{f(kr_{n}/n,x) - f((i+(k-1)r_{n})/n,x)}) \Big| \Pr \bigg( \frac{Y_{i+(k-1)r_{n}}}{a_{n}} \in \rmd x \bigg)\\[0.4em]
 & \leq &  C_{2M} \sum_{k=1}^{k_{n}} \sum_{i=1}^{r_{n}} \int_{\mathbb{E}} \bigg| f \bigg( \frac{kr_{n}}{n},x \bigg) -  f \bigg( \frac{i+(k-1)r_{n}}{n},x \bigg) \bigg| \Pr \bigg( \frac{Y_{i+(k-1)r_{n}}}{a_{n}} \in \rmd x \bigg),\\[0.4em]
 & = &  C_{2M} \sum_{k=1}^{k_{n}} \sum_{i=1}^{r_{n}} \int_{|x| >u} \bigg| f \bigg( \frac{kr_{n}}{n},x \bigg) - f \bigg( \frac{i+(k-1)r_{n}}{n},x \bigg) \bigg| \Pr \bigg( \frac{Y_{i+(k-1)r_{n}}}{a_{n}} \in \rmd x \bigg)
\end{eqnarray*}
where $M$ is some positive real number such that $|f(t,x)| \leq M$ for all $t \in [0,1]$ and $x \in \mathbb{E}$, and the last equality holds since $f(t,x)=0$ for all $t \in [0,1]$ and $|x| \leq u$. Since $f$ is continuous on a compact set $[0,1] \times \{x \in \mathbb{E} : |x| \geq u \}$, it is also uniformly continuous, and hence $\omega_{f,u}(\epsilon) \to 0$ as $\epsilon \to 0$, where
$$ \omega_{f,u}(\epsilon) = \sup \{ |f(t,x)-f(s,y)| : t,s \in [0,1],\,x,y \in \mathbb{E} \setminus (-u,u) \ \textrm{and} \ d((t,x),(s,y)) \leq \epsilon \}$$
is the modulus of continuity of the function $f$ restricted to $[0,1] \times (\mathbb{E} \setminus (-u,u))$, and $d$ is the metric on the direct product of metric spaces $[0,1]$ and $\mathbb{E}$ given by
$$d ((t,x), (s,y)) = \max \{ |t-s|, \rho(x,y) \},$$
with $\rho$ defined in (\ref{e:metricrho}). Therefore
\begin{eqnarray*}
 |G_{n}| &\leq& C_{2M}\,\omega_{f,u} \bigg( \sup_{1\leq k \leq k_{n}} \sup_{1 \leq i \leq r_{n}} \bigg| \frac{kr_{n}}{n} - \frac{i+(k-1)r_{n}}{n} \bigg| \bigg) \sum_{k=1}^{k_{n}} \sum_{i=1}^{r_{n}} \Pr \bigg( \frac{|Y_{i+(k-1)r_{n}}|}{a_{n}} >u \bigg)\\[0.4em]
         &=& C_{2M}\,\omega_{f,u} \bigg(  \sup_{1 \leq i \leq r_{n}} \frac{|r_{n}-i|}{n} \bigg) \sum_{k=1}^{k_{n}} \sum_{i=1}^{r_{n}} \Pr \bigg( \frac{\sigma_{i+(k-1)r_{n}}|Z_{i+(k-1)r_{n}}|}{a_{n}} >u \bigg)\\[0.4em]
         &\leq & C_{2M}\,\omega_{f,u} \bigg(  \sup_{1 \leq i \leq r_{n}} \frac{|r_{n}-i|}{n} \bigg) k_{n}r_{n} \Pr \bigg( \frac{|Z_{1}|}{a_{n}} >\frac{u}{\beta_{2}} \bigg).
\end{eqnarray*}
Since
$$ \sup_{1 \leq i \leq r_{n}} \frac{|r_{n}-i|}{n} \leq \frac{r_{n}}{n} \to 0 \qquad \textrm{as} \ n \to \infty,$$
it follows that
$$\omega_{f,u} \bigg(  \sup_{1 \leq i \leq r_{n}} \frac{|r_{n}-i|}{n} \bigg) \to 0 \qquad \textrm{as} \ n \to \infty,$$
and this together with
$$ \limsup_{n \to \infty} k_{n}r_{n} \Pr \bigg( \frac{|Z_{1}|}{a_{n}} >\frac{u}{\beta_{2}} \bigg) \leq \limsup_{n \to \infty} n \Pr \bigg( \frac{|Z_{1}|}{a_{n}} >\frac{u}{\beta_{2}} \bigg) = \Big( \frac{u}{\beta_{2}} \Big)^{-\alpha}$$
(which holds by relation (\ref{e:seqan1}))
yields $G_{n} \to 0$ as $n \to \infty$. Hence by (\ref{e:Tn-ii4}) we have
\begin{equation}\label{e:Tn-ii5}
 \sum_{k=1}^{k_{n}} (T_{n,k}(f)-1) = \sum_{k=1}^{k_{n}} \sum_{i=1}^{r_{n}} \int_{\mathbb{E}} (e^{-f((i+(k-1)r_{n})/n,x)}-1) \Pr \bigg( \frac{Y_{i+(k-1)r_{n}}}{a_{n}} \in \rmd x \bigg) + o(1)
\end{equation}
as $n \to \infty$.
Note that
\begin{eqnarray*}
 \sum_{k=1}^{k_{n}} \sum_{i=1}^{r_{n}} \int_{\mathbb{E}} (e^{-f((i+(k-1)r_{n})/n,x)}-1) \Pr \bigg( \frac{Y_{i+(k-1)r_{n}}}{a_{n}} \in \rmd x \bigg)
 = \sum_{j=1}^{k_{n}r_{n}} \int_{\mathbb{E}} (e^{-f(j/n,x)}-1) \Pr \bigg( \frac{Y_{j}}{a_{n}} \in \rmd x \bigg)
\end{eqnarray*}
and
\begin{eqnarray*}
\bigg| \sum_{j=1}^{k_{n}r_{n}} \int_{\mathbb{E}} (e^{-f(j/n,x)}-1) \Pr \bigg( \frac{Y_{j}}{a_{n}} \in \rmd x \bigg) - \sum_{j=1}^{n} \int_{\mathbb{E}} (e^{-f(j/n,x)}-1) \Pr \bigg( \frac{Y_{j}}{a_{n}} \in \rmd x \bigg) \bigg| &&\\[0.4em]
  & \hspace*{-64em} \leq& \hspace*{-32em} \sum_{j=k_{n}r_{n}+1}^{n} \int_{\mathbb{E}} |e^{-f(j/n,x)}-1| \Pr \bigg( \frac{Y_{j}}{a_{n}} \in \rmd x \bigg)
  = \sum_{j=k_{n}r_{n}+1}^{n} \int_{|x| > u} |e^{-f(j/n,x)}-1| \Pr \bigg( \frac{Y_{j}}{a_{n}} \in \rmd x \bigg)\\[0.4em]
  & \hspace*{-64em} \leq& \hspace*{-32em} 2 \sum_{j=k_{n}r_{n}+1}^{n} \Pr \bigg( \frac{|Z_{j}|}{a_{n}} > \frac{u}{\beta_{2}} \bigg) = 2 (n-k_{n}r_{n})  \Pr \bigg( \frac{|Z_{1}|}{a_{n}} > \frac{u}{\beta_{2}} \bigg)\\[0.4em]
   & \hspace*{-64em} =& \hspace*{-32em}2 \Big( 1- \frac{k_{n}r_{n}}{n} \Big)\,n \Pr \bigg( \frac{|Z_{1}|}{a_{n}} > \frac{u}{\beta_{2}} \bigg) \to 0 \qquad \textrm{as} \ n \to \infty,
\end{eqnarray*}
since $1-k_{n}r_{n}/n \to 0$ and $n \Pr( |Z_{1}| > ua_{n}/\beta_{2}) \to (u/ \beta_{2})^{-\alpha}$ as $n \to \infty$. Hence by (\ref{e:Tn-ii5}) we have
 \begin{equation}\label{e:Tn10}
 \sum_{k=1}^{k_{n}} (T_{n,k}(f)-1) = \sum_{j=1}^{n} \int_{\mathbb{E}} (e^{-f(j/n,x)}-1) \Pr \bigg( \frac{Y_{j}}{a_{n}} \in \rmd x \bigg) + o(1) \quad \textrm{as} \ n \to \infty.
\end{equation}
Now, let
$$\lambda_{n}(\rmd t, \rmd x) = \sum_{j=1}^{n} \delta_{j/n}(\rmd t) \Pr ( a_{n}^{-1}Y_{j} \in \rmd x), \qquad n \in \mathbb{N}.$$
Then by Lemma~\ref{l:lemma1}, $\lambda_{n} \vto Leb \times \nu$ as $n \to \infty$, and this yields
\begin{eqnarray*}
 \sum_{j=1}^{n} \int_{\mathbb{E}} (e^{-f(j/n,x)}-1) \Pr \bigg( \frac{Y_{j}}{a_{n}} \in \rmd x \bigg) &=& \iint_{[0,1] \times \mathbb{E}}(e^{-f(t,x)}-1)\,\lambda_{n}(\rmd t, \rmd x)\\[0.4em]
 &\to&  \iint_{[0,1] \times \mathbb{E}} (e^{-f(t,x)}-1)\,\rmd t \nu(\rmd x),
\end{eqnarray*}
as $n \to \infty$. Therefore from (\ref{e:Tn10}) we obtain
\begin{equation*}
 \lim_{n \to \infty} \sum_{k=1}^{k_{n}} (T_{n,k}(f)-1) = \iint_{[0,1] \times \mathbb{E}} (e^{-f(t,x)}-1)\,\rmd t \nu(\rmd x),
\end{equation*}
and this concludes the proof.
\end{proof}

Let $(c_{j})_{j \geq 0}$ be a sequence of real numbers satisfying condition (\ref{e:momcond}).
For linear processes
\begin{equation}\label{e:linproc1}
X_{n} = \sum_{j=0}^{\infty}c_{j}Y_{n-j}, \qquad n \in \mathbb{N},
\end{equation}
with $Y_{i}$ defined in (\ref{e:defYi}),
define time-space point processes
\begin{equation}\label{e:ppNn}
N_{n} = \sum_{i=1}^{n}\delta_{(i/n, a_{n}^{-1}X_{i})}, \qquad n \in \mathbb{N}.
\end{equation}
Before proving the main result in this section on convergence of point processes $N_{n}$, we will prove the point process convergence in a special case when $c_{0}=1$ and $c_{j}=0$ for all $j \in \mathbb{N}$. The next result will be used in proving the general point process convergence, but it is also of independent interest. Let
\begin{equation}\label{e:ppN*n}
N^{*}_{n} = \sum_{i=1}^{n}\delta_{(i/n, a_{n}^{-1}Y_{i})}, \qquad n \in \mathbb{N}.
\end{equation}

\begin{prop}\label{p:ppconvY}
Let $(Z_{i})_{i \in \mathbb{Z}}$ be a strictly stationary sequence of random variables satisfying condition $(\ref{e:D'cond})$ and with regularly varying tail probabilities as specified by $(\ref{e:regvarpr1})$ and $(\ref{e:pq})$. Let $(\sigma_{i})_{i \in \mathbb{Z}}$ be a sequence of positive real numbers satisfying $(\ref{e:scaleparam1})$ and $(\ref{e:scaleparam2})$. Assume the mixing condition $(\ref{e:mixcon})$ holds. Then
$$ N^{*}_{n} \dto N^{*} := \sum_{i=1}^{\infty}\delta_{(T_{i}, P_{i})} \qquad \textrm{as} \ n \to \infty,$$
in $M_p([0,1] \times \mathbb{E})$, where $N^{*}$ is a Poisson process with intensity measure $Leb \times \nu$, with $\nu$ as in Lemma~$\ref{l:lemma1}$.
\end{prop}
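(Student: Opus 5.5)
By (\ref{e:ppLf}) it suffices to show that for every $f \in C_{K}^{+}([0,1] \times \mathbb{E})$
\begin{equation*}
\E \biggl[ \exp \biggl\{ - \sum_{i=1}^{n} f \biggl(\frac{i}{n}, \frac{Y_{i}}{a_{n}} \biggr) \biggr\} \biggr] \to \exp \biggl\{ - \iint_{[0,1] \times \mathbb{E}} (1-e^{-f(t,x)})\,\rmd t\, \nu(\rmd x) \biggr\},
\end{equation*}
the right-hand side being the Laplace functional (\ref{e:LapPP}) of a Poisson process with intensity $Leb \times \nu$. By the mixing condition $\mathcal{A}'$ in (\ref{e:mixcon}), the left-hand side differs by $o(1)$ from $\prod_{k=1}^{k_{n}} T_{n,k}(f)$, with $T_{n,k}(f)$ the block Laplace transforms from the proof of Lemma~\ref{l:lemma2}. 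So the task reduces to showing $\prod_{k=1}^{k_{n}} T_{n,k}(f) \to \exp\{\iint (e^{-f}-1)\,\rmd t\,\nu(\rmd x)\}$.

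For this I take logarithms. By Lemma~\ref{l:lemma2}(i), $\sup_{k}|T_{n,k}(f)-1| \to 0$, so for $n$ large all $T_{n,k}(f)$ lie in $(1/2,1]$ and $\log T_{n,k}(f) = (T_{n,k}(f)-1) + R_{n,k}$ with $|R_{n,k}| \leq C |T_{n,k}(f)-1|^{2}$. Then
$$\Big|\sum_{k=1}^{k_{n}} R_{n,k}\Big| \leq C \sup_{k}|T_{n,k}(f)-1| \sum_{k=1}^{k_{n}} |T_{n,k}(f)-1|.$$
Since $T_{n,k}(f) \leq 1$, all terms $T_{n,k}(f)-1$ are nonpositive, so $\sum_k |T_{n,k}(f)-1| = -\sum_k (T_{n,k}(f)-1)$. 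By Lemma~\ref{l:lemma2}(ii) this sum is bounded, converging to the finite quantity $\iint (1-e^{-f})\,\rmd t\,\nu(\rmd x)$; finiteness holds because $f$ vanishes on $|x|\le u$ and $\nu(\{|x|>u\})<\infty$. Combined with (i), this gives $\sum_k R_{n,k} \to 0$.

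Hence
$$\log \prod_{k} T_{n,k}(f) = \sum_k (T_{n,k}(f)-1) + o(1) \to \iint (e^{-f(t,x)}-1)\,\rmd t\,\nu(\rmd x)$$
by Lemma~\ref{l:lemma2}(ii). Exponentiating and combining with $\mathcal{A}'$ yields the convergence of Laplace functionals, and hence $N^{*}_{n} \dto N^{*}$.

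The only delicate point is controlling the quadratic remainder uniformly in the growing number of blocks $k_n$. The bound above handles it, using the nonnegativity of $1-T_{n,k}(f)$ to turn the boundedness of the signed sum from Lemma~\ref{l:lemma2}(ii) into boundedness of the absolute sum. Everything else is supplied directly by $\mathcal{A}'$ and Lemma~\ref{l:lemma2}.
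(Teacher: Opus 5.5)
Your proposal is correct and follows the paper's proof essentially step for step. You use $\mathcal{A}'$ to reduce to the product of block Laplace transforms, Lemma~\ref{l:lemma2}(i) to linearize the logarithm, and the bound $\sup_k|T_{n,k}(f)-1|\cdot\sum_k|T_{n,k}(f)-1|\to 0$ to remove the quadratic remainder. The paper phrases the first step through independent block copies $\widehat{N}_{n}$, and it leaves implicit your observation that $T_{n,k}(f)\le 1$, which is what turns the convergence in Lemma~\ref{l:lemma2}(ii) into boundedness of $\sum_k|T_{n,k}(f)-1|$; on that point your write-up is slightly more complete.
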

\begin{proof}
Let $(Y_{k,j})_{j \in \mathbb{N}}$, with $k \in \mathbb{N}$, be independent copies of $(Y_{j})_{j \in \mathbb{N}}$, and define
$$ \widehat{N}_{n} = \sum_{k=1}^{k_{n}} \sum_{i=1}^{r_{n}}  \delta_{(kr_{n}/n, a_{n}^{-1}Y_{k, i+(k-1)r_{n}})}, \qquad n \in \mathbb{N},$$
where $(r_{n})_{n}$ and $(k_{n})_{n}$ are sequences that appear in condition $\mathcal{A}'$.
Since for every $f \in C_{K}^{+}([0,1] \times \mathbb{E})$,
$$ \Psi_{N_{n}^{*}}(f) = \E e^{-N_{n}^{*}(f)} = \E \biggl[ \exp \biggl\{ - \sum_{i=1}^{n} f \biggl(\frac{i}{n}, \frac{Y_{i}}{a_{n}}
 \biggr) \biggr\} \biggr]$$
  and
\begin{eqnarray}\label{e:PsiNtilda}
\nonumber \Psi_{\widehat{N}_{n}}(f) &=& \E \biggl[ \exp \biggl\{ - \sum_{k=1}^{k_{n}} \sum_{i=1}^{r_{n}} f \biggl(\frac{kr_{n}}{n}, \frac{Y_{k,i+(k-1)r_{n}}}{a_{n}} \biggr) \bigg\} \bigg]\\[0.2em]
  &=&
\prod_{k=1}^{k_{n}} \E \biggl[ \exp \biggl\{ - \sum_{i=1}^{r_{n}} f \biggl(\frac{kr_{n}}{n}, \frac{Y_{i+(k-1)r_{n}}}{a_{n}} \biggr) \biggr\} \biggr],
\end{eqnarray}
condition $\mathcal{A}'$ implies
\begin{equation}\label{e:diffppconv}
 \Psi_{N_{n}^{*}}(f) - \Psi_{\widehat{N}_{n}}(f) \to 0 \qquad \textrm{as} \ n \to \infty,
\end{equation}
and hence by relation (\ref{e:ppLf}) we conclude that the weak limits of $N_{n}^{*}$ and $\widehat{N}_{n}$, as $n \to \infty$, must coincide.

As in the proof of Lemma~\ref{l:lemma2}, for all $n \in \mathbb{N}$ and $k=1,\ldots, k_{n}$ define
$$ T_{n,k}(f) = \E \biggl[ \exp \biggl\{ - \sum_{i=1}^{r_{n}} f \biggl(\frac{kr_{n}}{n}, \frac{Y_{i+(k-1)r_{n}}}{a_{n}} \biggr) \biggr\} \biggr].$$
Then by Lemma~\ref{l:lemma2} (i) the inequality $|T_{n,k}(f)-1| \leq 1/2$ holds for large $n$ uniformly in $k \in \{1,\ldots,k_{n}\}$.
Using the elementary inequality
$$ |\ln (1+x) -x| \leq x^{2} \qquad \textrm{for} \ |x| \leq \frac{1}{2}$$
(which is derived from the Taylor series of the function $x \mapsto \ln (1+x)$), we obtain
\begin{equation*}\label{e:Ts1}
|\ln T_{n,k}(f) - (T_{n,k}(f)-1)| \leq (T_{n,k}(f)-1)^{2}
\end{equation*}
for large $n$ uniformly in $k$. Consequently,
\begin{equation}\label{e:Tn14}
 \bigg| \ln \prod_{k=1}^{k_{n}} T_{n,k}(f)  - \sum_{k=1}^{k_{n}}(T_{n,k}(f)-1) \bigg| \leq \sum_{k=1}^{k_{n}}(T_{n,k}(f)-1)^{2}
\end{equation}
for large $n$. Noting that by Lemma~\ref{l:lemma2} (i) and (ii) it holds that
$$ \sum_{k=1}^{k_{n}}(T_{n,k}(f)-1)^{2} \leq \sup_{1 \leq k \leq k_{n}}|T_{n,k}(f)-1| \cdot \sum_{k=1}^{k_{n}} |T_{n,k}(f)-1| \to 0 \qquad \textrm{as} \ n \to \infty,$$
relation (\ref{e:Tn14}) and the convergence relation in Lemma~\ref{l:lemma2} (ii) yield
$$ \lim_{n \to \infty} \ln \prod_{k=1}^{k_{n}} T_{n,k}(f) = \iint_{[0,1] \times \mathbb{E}} (e^{-f(t,x)}-1)\,\rmd t \nu(\rmd x).$$
Recalling (\ref{e:PsiNtilda}) and the definition of $T_{n,k}(f)$ we conclude that
$$ \lim_{n \to \infty} \ln \Psi_{\widehat{N}_{n}}(f) = \iint_{[0,1] \times \mathbb{E}} (e^{-f(t,x)}-1)\,\rmd t \nu(\rmd x).$$
From this and (\ref{e:diffppconv}) we immediately obtain
$$ \lim_{n \to \infty} \Psi_{N^{*}_{n}}(f) = \exp \bigg\{ - \iint_{[0,1] \times \mathbb{E}} (1- e^{-f(t,x)})\,\rmd t \nu(\rmd x) \bigg\},$$
from which, taking into account (\ref{e:LapPP}), we conclude that $\Psi_{N^{*}_{n}}(f) \to \Psi_{N^{*}}(f)$ as $n \to \infty$, where $N^{*}$ is a Poisson process with intensity measure $Leb \times \nu$. Finally, relation (\ref{e:ppLf}) implies $N_{n}^{*} \dto N^{*}$ as $n \to \infty$.
\end{proof}

In the following lemma we prove a version of Cline's theorem on the tail behaviour of weighted sums of heavy-tailed random variables. The original result holds in the i.i.d.~case (see Cline~\cite{Cl83}, Theorem 2.3; cf.~Resnick~\cite{Re87}, Lemma 4.24 and~Embrechts et al.~\cite{EmKlMi97}, Lemma A3.26), and here we extend it to the weak dependence case for nonnegative random variables and weights.

\begin{lem}\label{l:Cline}
Let $(Z_{i})_{i \geq 0}$ be a strictly stationary sequence of nonnegative regularly varying random variables with index $\alpha >0$ satisfying $(\ref{e:D'cond})$, and let $(c_{j})_{j \geq 0}$ be a sequence of nonnegative real numbers satisfying $(\ref{e:momcond})$. Then
\begin{equation}\label{e:Cline}
 \lim_{n \to \infty} \frac{\Pr \big( \sum_{j=0}^{\infty}c_{j}Z_{j} > a_{n} \big)}{\Pr(Z_{0}> a_{n})} = \sum_{j=0}^{\infty}c_{j}^{\alpha},
\end{equation}
where $(a_{n})_{n}$ is the sequence of real numbers defined in $(\ref{e:seqan})$.
\end{lem}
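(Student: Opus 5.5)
The plan follows Cline's argument: first handle a finite number of terms, using only stationarity, regular variation and condition $D'$; then control the infinite tail with a truncation argument. Since everything is nonnegative, lower and upper bounds can be handled separately. Throughout, write $S=\sum_{j=0}^{\infty}c_{j}Z_{j}$ and $S_{m}=\sum_{j=0}^{m}c_{j}Z_{j}$. Note $\Pr(Z_{0}>a_{n})\sim 1/n$ by (\ref{e:seqan}), since $Z_{0}\ge 0$.

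\textbf{Step 1 (finite sums).} The aim is $\Pr(S_{m}>a_{n})/\Pr(Z_{0}>a_{n})\to\sum_{j\le m}c_{j}^{\alpha}$.

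For the lower bound, use $S_{m}\ge \max_{j}c_{j}Z_{j}$ together with inclusion--exclusion:
$$\Pr(S_m>a_n)\ge \sum_{j\le m}\Pr(c_jZ_j>a_n)-\sum_{i<j\le m}\Pr(c_iZ_i>a_n,c_jZ_j>a_n).$$
By regular variation, $n\Pr(c_jZ_0>a_n)\to c_j^{\alpha}$. The cross terms are bounded by $\Pr(Z_i>xa_n,Z_j>xa_n)$ with $x=1/\max c_j$. By stationarity, $n$ times this is dominated by $n\sum_{l=2}^{m+1}\Pr(Z_1>xa_n,Z_l>xa_n)$, which tends to $0$ by (\ref{e:D'condrn}) (take $r_n\ge m+1$ eventually).

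For the upper bound, fix $\epsilon\in(0,1)$ and note
$$\{S_m>a_n\}\subseteq\bigcup_j\{c_jZ_j>(1-\epsilon)a_n\}\cup\bigcup_{i\neq j}\{c_iZ_i>\epsilon a_n/(m+1),\,c_jZ_j>\epsilon a_n/(m+1)\}.$$
Indeed, if no single term exceeds $(1-\epsilon)a_n$ but the sum exceeds $a_n$, then at least two terms exceed $\epsilon a_n/(m+1)$. I should check this carefully; one can refine it to the standard statement that either one term is big or two terms are of order $a_n$. The second union is again $o(1/n)$ by $D'$, so the $\limsup$ is at most $(1-\epsilon)^{-\alpha}\sum c_j^{\alpha}$. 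Letting $\epsilon\to0$ gives the claim.

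\textbf{Step 2 (tail control).} The lower bound for $S$ follows immediately from $S\ge S_m$, letting $m\to\infty$. For the upper bound, $\Pr(S>a_n)\le\Pr(S_m>(1-\epsilon)a_n)+\Pr(\sum_{j>m}c_jZ_j>\epsilon a_n)$, so it suffices to show
$$\lim_{m\to\infty}\limsup_{n\to\infty} n\Pr\Big(\sum_{j>m}c_jZ_j>\epsilon a_n\Big)=0.$$
This is the main obstacle, because the i.i.d. proofs of Cline's theorem use independence here. The plan is a dependence-free bound, split into the cases $\alpha>1$ and $\alpha\le1$ as in Resnick's Lemma 4.24 / Embrechts et al. A3.26. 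Using the $\delta$ of (\ref{e:momcond}), set $w_j=c_j^{\delta}/\sum_{l>m}c_l^{\delta}$. Since $\sum_{j>m} w_j=1$, the event $\{\sum_{j>m}c_jZ_j>\epsilon a_n\}$ forces $c_jZ_j>w_j\epsilon a_n$ for some $j$. A union bound then gives
$$\Pr\Big(\sum_{j>m}c_jZ_j>\epsilon a_n\Big)\le\sum_{j>m}\Pr\big(Z_0>\epsilon a_n c_j^{\delta-1}/K_m\big),\qquad K_m=\sum_{l>m}c_l^{\delta}.$$
Next apply Potter's bounds: for $\eta>0$ with $\alpha-\eta>\delta$, we have $n\Pr(Z_0>ya_n)\le C\max(y^{-\alpha+\eta},y^{-\alpha-\eta})$ for $ya_n\ge$ some $x_0$. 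For small arguments $ya_n<x_0$, use the trivial bound $\Pr\le1$; these correspond to $c_j$ comparable to $a_n$, and since $c_j\to0$ with $\delta\le1$, they do not occur for large $n$. Because $c_j\le K_m^{1/\delta}\to0$, the arguments $y_j=\epsilon c_j^{\delta-1}/K_m$ satisfy $y_j\ge \epsilon K_m^{-1/\delta}$, which is large. Hence
$$n\Pr(Z_0>y_ja_n)\le C y_j^{-(\alpha-\eta)}=C\epsilon^{-(\alpha-\eta)}c_j^{(1-\delta)(\alpha-\eta)}K_m^{\alpha-\eta}.$$
Summing over $j>m$ requires $\sum_{j>m}c_j^{(1-\delta)(\alpha-\eta)}$ to be finite and small. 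This holds provided $(1-\delta)(\alpha-\eta)\ge\delta$, using $c_j\le1$ eventually; otherwise, replace the weights $c_j^{\delta}$ by $c_j^{\gamma}$ with a suitable $\gamma\in[\delta,1)$, chosen so that the exponent works out. The bound is then $O(K_m^{\alpha-\eta}\cdot\text{finite})\to0$ as $m\to\infty$. I expect the bookkeeping of exponents in this weight choice to be the delicate part, and I would follow the i.i.d. weighting proof (which only uses marginal tails) closely.

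With Steps 1 and 2, send $n\to\infty$, then $m\to\infty$, then $\epsilon\to0$ to obtain (\ref{e:Cline}). Almost-sure finiteness of $S$ follows from the same moment-type bound, since $\E S^{\delta}\le\sum c_j^{\delta}\E Z_0^{\delta}<\infty$.
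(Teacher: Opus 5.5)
Your Step 1 is correct and is essentially the paper's argument: the paper peels off one summand at a time with the two-variable inequalities, while you use a one-shot union/Bonferroni bound, and in both $D'$ kills the cross terms. The lower bound in Step 2 is also fine.

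The gap is the tail estimate $\lim_{m}\limsup_{n} n\Pr(\sum_{j>m}c_jZ_j>\epsilon a_n)=0$. A pure union bound
$\{\sum_{j>m}c_jZ_j>\epsilon a_n\}\subseteq\bigcup_{j>m}\{c_jZ_j>w_j\epsilon a_n\}$ with $\sum_j w_j\le 1$ cannot give it under $(\ref{e:momcond})$, whatever the weights. After Potter's bounds (or even with exact asymptotics) you need $\sum_{j>m}(c_j/w_j)^{a}<\infty$ for some $a\le\alpha$. Hölder's inequality gives
\[
\sum_{j>m} c_j^{a/(1+a)}\le\Big(\sum_{j>m}(c_j/w_j)^{a}\Big)^{1/(1+a)}\Big(\sum_{j>m}w_j\Big)^{a/(1+a)},
\]
so the method requires $\sum_j c_j^{\rho}<\infty$ for some $\rho\le\alpha/(1+\alpha)$. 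That is strictly stronger than $(\ref{e:momcond})$, where $\delta$ may be anything below $\min\{\alpha,1\}$, and even $\delta=1$ when $\alpha>1$.

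Your own exponent condition, $\gamma\ge\delta$ together with $(1-\gamma)(\alpha-\eta)\ge\delta$, is exactly this obstruction. It is solvable only if $\delta\le(\alpha-\eta)/(1+\alpha-\eta)$, so re-weighting with $\gamma\in[\delta,1)$ cannot repair it; for $\delta=1$ that range is empty. Concretely, $\alpha=2$ and $c_j=j^{-6/5}$ satisfy $(\ref{e:momcond})$ with $\delta=1$, but $\sum_j c_j^{2/3}=\infty$. For Pareto $Z_0$ with $\Pr(Z_0>z)=z^{-2}$, $z\ge 1$, the union-bound sum divided by $\Pr(Z_0>\epsilon a_n)$ increases to $\sum_{j>m}(c_j/w_j)^2=\infty$. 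So the bound is not even $O(1/n)$.

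What is missing is a truncation-plus-moment bound (Markov's inequality and Karamata's theorem). It uses only the marginal law, so neither independence nor $D'$ is needed in this step; the paper simply defers the infinite case to the argument of Resnick's Lemma 4.24.

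For example, since $\delta\le 1$, $(\sum_{j>m}c_jZ_j)^{\delta}\le\sum_{j>m}c_j^{\delta}V_j$, where $V_j=Z_j^{\delta}$ is regularly varying of index $\alpha/\delta>1$. Put $K_m=\sum_{j>m}c_j^{\delta}$, $y=(\epsilon a_n)^{\delta}$ and $\theta\in(0,1)$. Then $\sum_{j>m}c_j^{\delta}V_j\le\theta y+\sum_{j>m}c_j^{\delta}V_j1_{\{V_j>\theta y/K_m\}}$. By Markov's inequality, stationarity and Karamata's theorem,
\[
\Pr\Big(\sum_{j>m}c_jZ_j>\epsilon a_n\Big)\le\frac{K_m\,\E\big[V_0 1_{\{V_0>\theta y/K_m\}}\big]}{(1-\theta)\,y}\sim\frac{\theta}{1-\theta}\,\frac{\alpha}{\alpha-\delta}\,\Pr\Big(Z_0>\epsilon a_n(\theta/K_m)^{1/\delta}\Big).
\]
Hence $\limsup_n n\Pr(\sum_{j>m}c_jZ_j>\epsilon a_n)\le\frac{\theta}{1-\theta}\frac{\alpha}{\alpha-\delta}\epsilon^{-\alpha}(K_m/\theta)^{\alpha/\delta}$, which tends to $0$ as $m\to\infty$. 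With this replacing your weighted union bound, the rest of your Step 2 goes through.
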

\begin{proof}
We first show that for two nonnegative random variables $U$ and $V$, $x>0$ and $0 < \epsilon < 1/2$ it holds that
\begin{equation}\label{e:Cline1new}
\Pr(U+V > x) \leq \Pr(U>(1-\epsilon)x) + \Pr(V >(1-\epsilon)x) + \Pr(U > \epsilon x, V > \epsilon x)
\end{equation}
and
\begin{equation}\label{e:Cline2new}
\Pr(U+V > x) \geq \Pr(U>(1+\epsilon)x, V \leq \epsilon x) + \Pr(V >(1+\epsilon)x, U \leq \epsilon x).
\end{equation}
Note that $\max\{U,V\} \leq (1-\epsilon)x$ and $\min\{U,V\} \leq \epsilon x$ imply $U+V \leq x$. This yields
$$\{U+V >x\} \subseteq \{U > (1-\epsilon)x\} \cup \{V > (1-\epsilon)x\} \cup \{U> \epsilon x, V > \epsilon x\},$$
 which immediately implies relation (\ref{e:Cline1new}). Since
$$ \{ U > (1+\epsilon)x, V \leq \epsilon x \} \cup  \{ V > (1+\epsilon)x, U \leq \epsilon x \} \subseteq \{ U + V > x \},$$
and the events on the left-hand side in this relation are disjoint, we see that (\ref{e:Cline2new}) holds.

We start with showing that (\ref{e:Cline}) holds in the case when only finitely many elements of the sequence $(c_{j})$ are nonzero. Without loss of generality assume $c_{0}, \ldots, c_{m}$ are nonzero for some $m \in \mathbb{N}$. Take an arbitrary $ \epsilon \in (0, 1/2)$. By (\ref{e:Cline1new}) we have
\begin{eqnarray}\label{e:Cline3new}
\nonumber \Pr \bigg( \sum_{j=0}^{m}c_{j}Z_{j} > a_{n} \bigg) &\leq& \Pr \bigg( \sum_{j=0}^{m-1}c_{j}Z_{j} > (1-\epsilon) a_{n} \bigg) + \Pr ( c_{m}Z_{m} > (1-\epsilon) a_{n} )\\[0.1em]
&& + \Pr \bigg( \sum_{j=0}^{m-1}c_{j}Z_{j} > \epsilon a_{n},\,c_{m}Z_{m} > \epsilon a_{n} \bigg).
\end{eqnarray}
Let $c= \max\{c_{j} : j=0,\ldots,m\}$. Since relations (\ref{e:seqan}) and (\ref{e:seqan1}) imply
$$ \frac{\Pr(c_{m}Z_{m}>(1-\epsilon)a_{n})}{\Pr(Z_{0}>a_{n})} = \frac{n \Pr(Z_{m}>(1-\epsilon) c_{m}^{-1} a_{n})}{n \Pr(Z_{0}>a_{n})} \to
 (1-\epsilon)^{-\alpha}c_{m}^{\alpha} \qquad \textrm{as} \ n \to \infty,$$
and by stationarity and relations (\ref{e:seqan}) and (\ref{e:D'cond}) (i.e.~(\ref{e:D'condrn})) it holds that
\begin{eqnarray}\label{e:Cline4new}
\nonumber \frac{\Pr \big( \sum_{j=0}^{m-1}c_{j}Z_{j} > \epsilon a_{n},\,c_{m}Z_{m} > \epsilon a_{n} \big)}{\Pr(Z_{0}>m)} &\leq& \sum_{j=0}^{m-1} \frac{\Pr (c_{j}Z_{j} > \epsilon a_{n}/m,\,c_{m}Z_{m} > \epsilon a_{n})}{\Pr(Z_{0}>m)}\\[0.3em]
& \hspace*{-30em} \leq& \hspace*{-15em} \sum_{j=0}^{m-1} \frac{n \Pr (Z_{j} > \epsilon a_{n}/(cm),\,Z_{m} > \epsilon a_{n}/(cm))}{n \Pr(Z_{0}>m)} \to 0 \qquad \textrm{as} \ n \to \infty,
\end{eqnarray}
from (\ref{e:Cline3new}) we obtain
\begin{equation*}
\limsup_{n \to \infty} \frac{\Pr \big( \sum_{j=0}^{m}c_{j}Z_{j} > a_{n} \big)}{\Pr(Z_{0}>a_{n})} \leq \limsup_{n \to \infty} \frac{\Pr \big( \sum_{j=0}^{m-1}c_{j}Z_{j} > (1-\epsilon) a_{n} \big)}{\Pr(Z_{0}>a_{n})} + (1-\epsilon)^{-\alpha}c_{m}^{\alpha}.
\end{equation*}
Repeating the same procedure for $\Pr \big( \sum_{j=0}^{m-1}c_{j}Z_{j} > (1-\epsilon) a_{n} \big)$ gives
\begin{equation*}
\limsup_{n \to \infty} \frac{\Pr \big( \sum_{j=0}^{m}c_{j}Z_{j} > a_{n} \big)}{\Pr(Z_{0}>a_{n})}
 \leq \limsup_{n \to \infty} \frac{\Pr \big( \sum_{j=0}^{m-2}c_{j}Z_{j} > (1-\epsilon)^{2} a_{n} \big)}{\Pr(Z_{0}>a_{n})} + (1-\epsilon)^{-2\alpha}c_{m-1}^{\alpha} + (1-\epsilon)^{-\alpha}c_{m}^{\alpha},
\end{equation*}
and therefore recursively we obtain
\begin{eqnarray}\label{e:Cline5new}
\nonumber \limsup_{n \to \infty} \frac{\Pr \big( \sum_{j=0}^{m}c_{j}Z_{j} > a_{n} \big)}{\Pr(Z_{0}>a_{n})}
 &\leq& \limsup_{n \to \infty} \frac{\Pr \big( c_{0}Z_{0} > (1-\epsilon)^{m} a_{n} \big)}{\Pr(Z_{0}>a_{n})} + \sum_{j=1}^{m} (1-\epsilon)^{-(m+1-j)\alpha}c_{j}^{\alpha}\\[0.2em]
  &=& (1-\epsilon)^{-m \alpha}c_{0}^{\alpha} + \sum_{j=1}^{m} (1-\epsilon)^{-(m+1-j)\alpha}c_{j}^{\alpha}.
\end{eqnarray}
On the other hand by (\ref{e:Cline2new}) we have
\begin{eqnarray}\label{e:Cline6new}
\nonumber \hspace*{-2em} \Pr \bigg( \sum_{j=0}^{m}c_{j}Z_{j} > a_{n} \bigg) &\geq& \Pr \bigg( \sum_{j=0}^{m-1}c_{j}Z_{j} > (1+\epsilon) a_{n},\,c_{m}Z_{m} \leq \epsilon a_{n} \bigg)\\[0.1em]
\nonumber &&  
+ \Pr \bigg(c_{m}Z_{m} > (1+\epsilon) a_{n},\,\sum_{j=0}^{m-1}c_{j}Z_{j} \leq \epsilon a_{n} \bigg)\\[0.1em]
\nonumber &
= & 
 \Pr \bigg( \sum_{j=0}^{m-1}c_{j}Z_{j} > (1+\epsilon) a_{n} \bigg) - \Pr \bigg( \sum_{j=0}^{m-1}c_{j}Z_{j} > (1+\epsilon) a_{n},\,c_{m}Z_{m} > \epsilon a_{n} \bigg)\\[0.1em]
&& 
+ \Pr (c_{m}Z_{m} > (1+\epsilon) a_{n}) -  \Pr \bigg(c_{m}Z_{m} > (1+\epsilon) a_{n},\,\sum_{j=0}^{m-1}c_{j}Z_{j} > \epsilon a_{n} \bigg).
\end{eqnarray}
As in (\ref{e:Cline4new}) it holds that, as $n \to \infty$,
\begin{eqnarray*}
\frac{\Pr \big( \sum_{j=0}^{m-1}c_{j}Z_{j} > (1+\epsilon) a_{n},\,c_{m}Z_{m} > \epsilon a_{n} \big)}{\Pr(Z_{0}>m)}
 \leq \frac{\Pr \big( \sum_{j=0}^{m-1}c_{j}Z_{j} > \epsilon a_{n},\,c_{m}Z_{m} > \epsilon a_{n} \big)}{\Pr(Z_{0}>m)} \to 0,
\end{eqnarray*}
and similarly
\begin{equation*}
\frac{ \Pr \big(c_{m}Z_{m} > (1+\epsilon) a_{n},\,\sum_{j=0}^{m-1}c_{j}Z_{j} > \epsilon a_{n} \big)}{\Pr(Z_{0}>m)} \to 0.
\end{equation*}
Therefore from (\ref{e:Cline6new}) we get
\begin{equation*}
\liminf_{n \to \infty} \frac{\Pr \big( \sum_{j=0}^{m}c_{j}Z_{j} > a_{n} \big)}{\Pr(Z_{0}>a_{n})} \geq \liminf_{n \to \infty} \frac{\Pr \big( \sum_{j=0}^{m-1}c_{j}Z_{j} > (1+\epsilon) a_{n} \big)}{\Pr(Z_{0}>a_{n})} + (1+\epsilon)^{-\alpha}c_{m}^{\alpha},
\end{equation*}
and again recursively we obtain
\begin{equation}\label{e:Cline7new}
 \liminf_{n \to \infty} \frac{\Pr \big( \sum_{j=0}^{m}c_{j}Z_{j} > a_{n} \big)}{\Pr(Z_{0}>a_{n})} \geq (1+\epsilon)^{-m \alpha}c_{0}^{\alpha} + \sum_{j=1}^{m} (1+\epsilon)^{-(m+1-j)\alpha}c_{j}^{\alpha}.
\end{equation}
Letting $\delta \to 0$ in (\ref{e:Cline5new}) and (\ref{e:Cline7new}) we see that
\begin{equation}\label{e:Cline8new}
\lim_{n \to \infty} \frac{\Pr \big( \sum_{j=0}^{m}c_{j}Z_{j} > a_{n} \big)}{\Pr(Z_{0}>a_{n})} = \sum_{j=0}^{m}c_{j}^{\alpha}.
\end{equation}
Having established relation (\ref{e:Cline}) for finitely many summands, the general case follows using the same arguments as in Lemma 4.24 in Resnick~\cite{Re87}.
\end{proof}

\begin{rem}\label{r:finitemomsum}
Note that condition $(\ref{e:momcond})$ implies $\sum_{j=0}^{\infty}|c_{j}|^{\beta} < \infty$ for all $\beta \geq \delta$, and hence in particular
$$  \sum_{j=0}^{\infty}|c_{j}|^{\alpha} < \infty.$$
\end{rem}

Theorem 2.4 (i) in Davis and Resnick~\cite{DaRe85} establishes convergence of point processes $N_{n}$ for stationary moving averages with i.i.d.~regularly varying innovations, while Theorem 2.1 in Niu~\cite{Ni97} generalizes this to the case of non-stationary moving averages with independent non-identically distributed innovations. Our main theorem below extends these results to the case of non-stationary linear processes with weakly dependent heavy-tailed innovations. In the proof we will follow the procedure from~\cite{DaRe85} with appropriate modifications due to non-stationarity of the innovations and the use of the dependence condition $D'$ instead of independence.

\begin{thm}\label{t:mainppc}
Let $(Z_{i})_{i \in \mathbb{Z}}$ be a strictly stationary sequence of random variables satisfying condition $(\ref{e:D'cond})$ and with regularly varying tail probabilities as specified by $(\ref{e:regvarpr1})$ and $(\ref{e:pq})$. Let $(\sigma_{i})_{i \in \mathbb{Z}}$ be a sequence of positive real numbers satisfying $(\ref{e:scaleparam1})$ and $(\ref{e:scaleparam2})$, and $(c_{j})_{j \geq 0}$ a sequence of real numbers satisfying $(\ref{e:momcond})$. Assume the mixing condition $(\ref{e:mixcon})$ holds. Then
\begin{equation}\label{e:ppconvNn}
 N_{n} \dto N := \sum_{i=1}^{\infty}\sum_{j=0}^{\infty}\delta_{(T_{i}, P_{i}c_{j})} \qquad \textrm{as} \ n \to \infty,
\end{equation}
in $M_p([0,1] \times \mathbb{E})$, where $\sum_{i=1}^{\infty}\delta_{(T_{i}, P_{i})}$ is a Poisson process with intensity measure $Leb \times \nu$, with $\nu$ as in Lemma~$\ref{l:lemma1}$.
\end{thm}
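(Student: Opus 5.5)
The proof follows the Davis--Resnick scheme: first treat a finite moving average, then remove the tail of the series by an approximation argument.

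\emph{Step 1 (finite order, vector form).} Fix $m \in \mathbb{N}$ and put $X_i^{(m)} = \sum_{j=0}^{m} c_j Y_{i-j}$. By Proposition~\ref{p:ppconvY}, $N^{*}_n \dto N^{*}$. We upgrade this to convergence of the processes of $(m+1)$-vectors
\[
\sum_{i=1}^{n}\delta_{(i/n,\,a_n^{-1}(Y_i, Y_{i-1},\ldots,Y_{i-m}))} \dto \sum_{i}\sum_{l=0}^{m}\delta_{(T_i,\,P_i e_{l+1})}
\]
on $[0,1]\times(\overline{\mathbb{R}}^{m+1}\setminus\{0\})$, where $e_{l+1}$ is the $(l+1)$-th unit vector. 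Two facts drive this step. Condition $D'$ (in the form (\ref{e:D'condrn}), with $m$ fixed) together with stationarity gives $n\Pr(|Z_1|>xa_n,|Z_{1+l}|>xa_n)\to0$ for each $l\ge1$. Hence, with probability tending to one, no vector has two coordinates that are large. In addition, the time shifts of size at most $m/n$ are asymptotically negligible, and the contribution at the boundary indices $i\le m$ vanishes. Concretely, for $f\in C_K^+$ on the vector space, we compare $\sum_i f(i/n, a_n^{-1}\mathbf Y_i)$ with $\sum_i\sum_{l} f((i+l)/n, a_n^{-1}Y_i e_{l+1})$. The two agree outside an event whose probability is bounded by $n\sum_{l=1}^m\Pr(|Y_1|>ua_n,|Y_{1+l}|>ua_n)\to0$, apart from a uniform-continuity error in the time coordinate. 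The latter sum is a functional of $N^{*}_n$, so Proposition~\ref{p:ppconvY} finishes this step.

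\emph{Step 2 (continuous mapping).} Apply the map $(t,\mathbf y)\mapsto(t,\sum_{j=0}^m c_j y_{j+1})$. This map sends the limit to $\sum_i\sum_{j=0}^m\delta_{(T_i,c_jP_i)}$. Since its restriction to points whose images are bounded away from $0$ is continuous and these points are relatively compact (as in Davis--Resnick, Prop.~3.18 of Resnick), we get $N_n^{(m)}:=\sum_{i}\delta_{(i/n,a_n^{-1}X_i^{(m)})}\dto N^{(m)}:=\sum_i\sum_{j\le m}\delta_{(T_i,P_ic_j)}$. When some $c_j=0$, these terms are simply dropped. Then $N^{(m)}\dto N$ as $m\to\infty$, because $\sum_j|c_j|^\alpha<\infty$ (Remark~\ref{r:finitemomsum}) ensures that only finitely many points $P_ic_j$ lie in any set bounded away from $0$.

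\emph{Step 3 (removing the tail; main obstacle).} By the standard approximation theorem (Billingsley Thm.~3.2), it suffices to show that for every $f\in C_K^+$ whose support lies in $\{|x|>u\}$ and every $\epsilon>0$,
\[
\lim_{m\to\infty}\limsup_{n\to\infty}\Pr\bigl(|N_n(f)-N^{(m)}_n(f)|>\epsilon\bigr)=0 .
\]
By uniform continuity of $f$ this reduces to a bound on $\Pr(\max_{i\le n}|\sum_{j>m}c_jY_{i-j}|>\eta a_n)$, together with control of the number of points above $u/2$ (which is tight by Step 2). The difficulty is to bound $n\Pr(|\sum_{j>m}c_j\sigma_{i-j}Z_{i-j}|>\eta a_n)$ uniformly in $i$. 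Here Lemma~\ref{l:Cline} is used: we bound by $\sum_{j>m}|c_j|\beta_2|Z_{i-j}|$ and split $|Z|$ into its nonnegative part (stationarity is kept). Lemma~\ref{l:Cline} then gives $\limsup_n n\Pr(\cdot)\le \eta^{-\alpha}\beta_2^\alpha\sum_{j>m}|c_j|^\alpha\to0$ as $m\to\infty$. Note that $D'$ holds for $|Z_i|$, since it is stated in terms of $|Z_i|$. Combining this with a union bound over $i\le n$ gives the required estimate and completes the proof. The delicate point is checking that Lemma~\ref{l:Cline}, stated for nonnegative weights applied to a shifted index sequence, applies with a constant independent of $i$. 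This holds by stationarity, because the $\sigma$'s enter only through the bound $\beta_2$.
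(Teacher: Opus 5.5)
Your proposal has the same architecture as the paper's proof. Steps 2 and 3 are essentially the paper's argument: the summation map $\Psi'$ applied to the finite-window vector processes, then removal of $\sum_{j>m}c_jY_{k-j}$ via the event $A(n,\eta)$, a union bound over $k\le n$, the bound $\sigma_i\le\beta_2$, stationarity of $(Z_i)$ and Lemma~\ref{l:Cline}, followed by a Slutsky-type approximation (Resnick's Lemma~4.25 in the paper, Billingsley's Theorem~3.2 in yours).

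The genuine difference is Step~1. The paper proves $I_n\dto I$ through the DC-semiring $S$ of rectangles. It computes $\lim_n\mathrm{E}[I_n(B)]$ in cases (C1) and (C2), and shows $\widetilde I_n(B)-I_n(B)\Pto0$, absorbing the time shift through the auxiliary counts $\widetilde T_{n,\gamma}$ and letting $\gamma\to0$. It then combines $\widetilde I_n=\Psi(N_n^*)\dto I$ with Kallenberg's Theorem~4.2. You instead compare test-function integrals and reduce to $N_n^*(g)$ with $g(t,y)=\sum_{l=0}^{m}f(t,ye_{l+1})\in C_K^+([0,1]\times\mathbb{E})$, which is the continuity of $\Psi$ made explicit. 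This avoids the semiring, the expectation computations and the case analysis, and handles the time shift by uniform continuity rather than the $\gamma$-argument.

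As written, however, Step~1 contains a false claim that needs repair. After reindexing, you compare $f(k/n,a_n^{-1}\mathbf Y_k)$ with $\sum_{l}f(k/n,a_n^{-1}Y_{k-l}e_{l+1})$. These do not agree on the event that no two coordinates of a window exceed $u$, the support threshold of $f$. For example, a window with $a_n^{-1}\mathbf Y_k=(2u,u/2,0,\ldots,0)$ is allowed on that event, and $f$ there differs from $f(2ue_1)$ by an amount of order one. This is a spatial error, not a time-shift error.

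The fix is a second threshold $0<u'<u/2$:
\begin{itemize}
\item[(a)] Work on the event that no pair $|Y_k|,|Y_{k+l}|$ with $1\le l\le m$ both exceed $u'a_n$. Its complement has probability at most $(n+m)\sum_{l=1}^{m}\Pr(|Z_1|>u'a_n/\beta_2,\,|Z_{1+l}|>u'a_n/\beta_2)\to0$ by (\ref{e:D'condrn}). You must pass to $Z$ here, since $(Y_i)$ is not stationary.
\item[(b)] On this event, any window on which either expression is nonzero has exactly one coordinate exceeding $u'$ (in fact exceeding $u$), and all other coordinates are at most $u'$.
\item[(c)] Hence the difference is bounded by $\omega_f(u')$, the modulus of continuity of $f$ on $[0,1]\times\{\|\mathbf y\|_\infty\ge u/2\}$, times $(m+1)\#\{k:|Y_k|>ua_n\}=O_P(1)$.
\item[(d)] Let $n\to\infty$ and then $u'\to0$.
\end{itemize}
This is what the paper's rectangles of type (C2) encode: the off-axis coordinates range over neighbourhoods $(b_j,d_j]\ni0$, and (\ref{e:DaRe7}) invokes $D'$ at the smaller level $C_2$. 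With this repair your route is a complete and somewhat leaner alternative to the paper's Step~1.
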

\begin{proof}
Let $m \geq 2$ be a fixed integer, and set $Y^{(k)}=(Y_{k}, Y_{k-1}, \ldots, Y_{k-m+1})$ for $k \in \mathbb{N}$, and
$$ I_{n} = \sum_{k=1}^{n}\delta_{(k/n, a_{n}^{-1}Y^{(k)})}, \qquad \ n \in \mathbb{N}.$$
Let $e_{j} \in \mathbb{R}^{m}$, for $j=1,\ldots,m$, be the basis element with $j$--th element equal to $1$ and the rest zero. Set
$$ I = \sum_{i=1}^{\infty}\sum_{j=1}^{m}\delta_{(T_{i}, P_{i}e_{j})}.$$
Let $S$ be the collection of all sets $B$ of the form
$$ B = (b_{0},d _{0}] \times (b_{1}, d_{1}] \times \ldots \times (b_{m}, d_{m}],$$
where $0 \leq b_{0} < d_{0} \leq 1$ and the $m$--dimensional rectangle $(b_{1}, d_{1}] \times \ldots \times (b_{m}, d_{m}]$ is bounded away from zero and $b_{j} < d_{j}$, $b_{j} \neq 0$, $d_{j} \neq 0$ for $j=1,\ldots,m$. Since $B \in S$, the set $(b_{1}, d_{1}] \times \ldots \times (b_{m}, d_{m}]$ either has empty intersection with all of the coordinate axes or intersects exactly one in an interval, i.e.~either
\begin{itemize}
\item[(C1)] $\quad (b_{1}, d_{1}] \times \ldots \times (b_{m}, d_{m}] \cap \{ye_{j} : y \in \mathbb{R}\} = \emptyset, \qquad \textrm{for} \ j=1,\ldots,m$
\end{itemize}
or
\begin{itemize}
\item[(C2)] $\quad (b_{1}, d_{1}] \times \ldots \times (b_{m}, d_{m}] \cap \{ye_{j} : y \in \mathbb{R}\} = \left\{ \begin{array}{cl}
  (b_{j'},d_{j'}], & \ \ \textrm{if} \ j=j',\\[0.5em]
 \emptyset, & \ \ \textrm{if} \ j \neq j'.
\end{array}\right.$
\end{itemize}
In the case (C2) observe that $b_{j} < 0 < d_{j}$ for $j \neq j'$ and $0 \notin (b_{j'}, d_{j'}]$. As noted in Davis and Resnick~\cite{DaRe85} it holds that
\begin{equation}\label{e:DaRe1}
 \Pr (I(\partial B) =0)=1 \qquad \textrm{for all} \ B \in S,
\end{equation}
and
\begin{equation}\label{e:DaRe2}
 \Pr(I(B)=0) = \left\{ \begin{array}{cl}
  1, & \ \ \textrm{if} \ B \in S \ \ \textrm{satisfies} \ (\textrm{C}1),\\[0.6em]
  e^{-Leb \times \nu ((b_{0},d_{0}] \times (b_{j'}, d_{j'}])}, & \ \ \textrm{if} \ B \in S \ \ \textrm{satisfies} \ (\textrm{C}2).
\end{array}\right.
\end{equation}
Now we show that
\begin{equation}\label{e:DaRe3}
 \lim_{n \to \infty} \mathrm{E} [I_{n}(B)] = \left\{ \begin{array}{cl}
  0, & \  \textrm{if} \ B \in S  \ \textrm{satisfies} \ (\textrm{C}1),\\[0.6em]
 Leb \times \nu ((b_{0},d_{0}] \times (b_{j'}, d_{j'}]), &  \ \textrm{if} \ B \in S  \ \textrm{satisfies} \ (\textrm{C}2).
\end{array}\right.
\end{equation}
Assume first $B$ satisfies (C1). Then $b_{j}<d_{j}<0$ or $0<b_{j}<d_{j}$ for all $j=1,\ldots,m$, and hence
\begin{eqnarray*}
  \mathrm{E}[I_{n}(B)] &=& \sum_{k/n \in (b_{0},d_{0}]} \Pr \bigg( \frac{Y_{k}}{a_{n}} \in (b_{1},d_{1}], \ldots, \frac{Y_{k-m+1}}{a_{n}} \in (b_{m},d_{m}] \bigg) \\[0.4em]
   &\leq &  \sum_{k/n \in (b_{0},d_{0}]} \Pr \bigg( \frac{|Y_{k}|}{a_{n}} > |b_{1}| \wedge |d_{1}|, \ldots, \frac{|Y_{k-m+1}|}{a_{n}} > |b_{m}| \wedge |d_{m}| \bigg),
\end{eqnarray*}
where $a \wedge b = \min \{a,b\}$.
Applying (\ref{e:scaleparam1}) and stationarity of the sequence $(Z_{i})$ we obtain
\begin{eqnarray}\label{e:DaRe4}
 \nonumber \mathrm{E}[I_{n}(B)]  &\leq &  \sum_{k=\floor{nb_{0}}+1}^{\floor{nd_{0}}} \Pr \bigg( \frac{|Z_{k}|}{a_{n}} > \frac{|b_{1}| \wedge |d_{1}|}{\beta_{2}}, \ldots, \frac{|Z_{k-m+1}|}{a_{n}} > \frac{|b_{m}| \wedge |d_{m}|}{\beta_{2}} \bigg).\\[0.4em]
 \nonumber  &\leq& (\floor{nd_{0}} - \floor{nb_{0}}) \Pr \bigg( \frac{|Z_{2}|}{a_{n}} > \frac{|b_{1}| \wedge |d_{1}|}{\beta_{2}}, \frac{|Z_{1}|}{a_{n}} > \frac{|b_{2}| \wedge |d_{2}|}{\beta_{2}} \bigg)\\[0.4em]
  &\leq& nd_{0} \Pr \bigg( \frac{|Z_{2}|}{a_{n}} > C_{1}, \frac{|Z_{1}|}{a_{n}} > C_{1} \bigg),
\end{eqnarray}
 where $C_{1} = \beta_{2}^{-1} \min \{ |b_{1}| \wedge |d_{1}|, |b_{2}| \wedge |d_{2}|\} >0$.
 For $B$ satisfying (C2) set
$$ E_{k,n} = \bigg\{  \frac{Y_{k-j'+1}}{a_{n}} \in (b_{j'},d_{j'}] \bigg\}$$
and
$$F_{k,n} = \bigg\{  \frac{Y_{k-j+1}}{a_{n}} \in (b_{j},d_{j}] \ \textrm{for all} \ j \in \{1,\ldots,m\} \setminus \{j'\} \bigg\},$$
and note that
\begin{equation}\label{e:DaRe5}
  \mathrm{E}[I_{n}(B)] = \sum_{k/n \in (b_{0},d_{0}]} \Pr(E_{k,n} \cap F_{k,n}) = \sum_{k/n \in (b_{0},d_{0}]} \Pr(E_{k,n}) - \sum_{k/n \in (b_{0},d_{0}]} \Pr(E_{k,n} \cap F_{k,n}^{c}).
\end{equation}
Using the same arguments as in the proof of Lemma~\ref{l:lemma1} we see that
\begin{eqnarray}\label{e:DaRe6}
   \nonumber \sum_{k/n \in (b_{0},d_{0}]} \Pr(E_{k,n}) &=& \sum_{k=1}^{n} \delta_{k/n}((b_{0},d_{0}]) \Pr \bigg( \frac{Y_{k-j'+1}}{a_{n}} \in (b_{j'},d_{j'}] \bigg)\\[0.4em]
  & \to& Leb \times \nu ((b_{0},d_{0}] \times (b_{j'},d_{j'}]) \qquad \textrm{as} \ n \to \infty.
\end{eqnarray}
Since
$$ F_{n,k}^{c}= \bigcup_{\scriptsize \begin{array}{l}
                          j=1  \\[0.1em]
                          j \neq j'
                        \end{array}}^{m} \bigg\{ \frac{Y_{k-j+1}}{a_{n}} \notin (b_{j},d_{j}] \bigg\} \subseteq \bigcup_{\scriptsize \begin{array}{l}
                          j=1  \\[0.1em]
                          j \neq j'
                        \end{array}}^{m} \bigg\{ \frac{|Y_{k-j+1}|}{a_{n}} \geq |b_{j}| \wedge |d_{j}| \bigg\}, $$
we have
\begin{eqnarray}\label{e:DaRe7}
 \nonumber  \sum_{k/n \in (b_{0},d_{0}]} \Pr(E_{k,n} \cap F_{k,n}^{c})
&\leq& \sum_{k=\floor{nb_{0}}+1}^{\floor{nd_{0}}} \sum_{\scriptsize \begin{array}{l}
                          j=1  \\[0.1em]
                          j \neq j'
                        \end{array}}^{m} \Pr \bigg(  \frac{Y_{k-j'+1}}{a_{n}} \in (b_{j'},d_{j'}], \frac{|Y_{k-j+1}|}{a_{n}} \geq |b_{j}| \wedge |d_{j}|\bigg)\\[0.4em]
\nonumber &\leq& \sum_{k=\floor{nb_{0}}+1}^{\floor{nd_{0}}} \sum_{\scriptsize \begin{array}{l}
                          j=1  \\[0.1em]
                          j \neq j'
                        \end{array}}^{m} \Pr \bigg(  \frac{|Z_{k-j'+1}|}{a_{n}} > \frac{|b_{j'}| \wedge |d_{j'}|}{\beta_{2}}, \frac{|Z_{k-j+1}|}{a_{n}} \geq \frac{|b_{j}| \wedge |d_{j}|}{\beta_{2}} \bigg)\\[0.4em]
&\leq& 2nd_{0} \sum_{j=2}^{m} \Pr \bigg(  \frac{|Z_{1}|}{a_{n}} > C_{2}, \frac{|Z_{j}|}{a_{n}} > C_{2} \bigg),
\end{eqnarray}
where $C_{2} = \min \{ \beta_{2}^{-1}(|b_{j}| \wedge |d_{j}|) : j=1,\ldots,m \}>0$. Observe that by (\ref{e:D'cond}) the last terms in (\ref{e:DaRe4}) and (\ref{e:DaRe7}) converge to $0$ as $n \to \infty$. This, together with (\ref{e:DaRe5}) and (\ref{e:DaRe6}) yields (\ref{e:DaRe3}).

Let
$$ \widetilde{I}_{n} = \sum_{k=1}^{n} \sum_{j=1}^{m} \delta_{(k/n, a_{n}^{-1}Y_{k} e_{j})}, \qquad \ n \in \mathbb{N}.$$
The next step is to show that, as $n \to \infty$,
\begin{equation}\label{e:DaRe8}
 \widetilde{I}_{n}(B) - I_{n}(B) \Pto 0 \qquad \textrm{for all} \ B \in S,
\end{equation}
where $\Pto$ denotes convergence in probability.
For $B \in S$ satisfying (C1), from the definition of $\widetilde{I}_{n}$, we have $\widetilde{I}_{n}(B)=0$. This, together with (\ref{e:DaRe3}) and Markov's inequality gives (\ref{e:DaRe8}). Take now $B \in S$ satisfying (C2). Using the fact that the points $Y_{k}e_{j}$ lie on the coordinate axes and Lemma~\ref{l:lemma1} we obtain
\begin{eqnarray}\label{e:DaRe9}
 \nonumber  \mathrm{E}[\widetilde{I}_{n}(B)] &=&  \sum_{k=1}^{n} \delta_{k/n}((b_{0},d_{0}]) \Pr \bigg( \frac{Y_{k}}{a_{n}} \in (b_{j'},d_{j'}] \bigg)\\[0.4em]
   &\to&  Leb \times \nu ((b_{0},d_{0}] \times (b_{j'},d_{j'}]) \qquad \textrm{as} \ n \to \infty.
\end{eqnarray}
For $n \geq j'$ write
$$ I_{n}(B)= \sum_{k=1}^{j'-1}\delta_{(k/n, a_{n}^{-1}Y^{(k)})}(B) + \sum_{k=j'}^{n}\delta_{(k/n, a_{n}^{-1}Y^{(k)})}(B) =: I^{*}_{1,n}+I^{*}_{2,n},$$
and note that
\begin{eqnarray*}
\nonumber \hspace*{-2.5em} \mathrm{E} I^{*}_{1,n} &\leq& \sum_{k=1}^{j'-1} \Pr \bigg( \frac{Y_{k}}{a_{n}} \in (b_{1},d_{1}], \ldots, \frac{Y_{k-m+1}}{a_{n}} \in (b_{m},d_{m}] \bigg)\\[0.3em]
\nonumber &\leq& \sum_{k=1}^{m} \Pr \bigg( \frac{Y_{k-j'+1}}{a_{n}} \in (b_{j'},d_{j'}] \bigg) \leq \sum_{k=1}^{m} \Pr \bigg( \frac{|Y_{k-j'+1}|}{a_{n}} > |b_{j'}| \wedge |d_{j'}| \bigg)\\[0.3em]
&\leq& \sum_{k=1}^{m} \Pr \bigg( \frac{|Z_{k-j'+1}|}{a_{n}} >  \frac{|b_{j'}| \wedge |d_{j'}|}{\beta_{2}} \bigg) = m \Pr \bigg( \frac{|Z_{1}|}{a_{n}} >  \frac{|b_{j'}| \wedge |d_{j'}|}{\beta_{2}} \bigg).
\end{eqnarray*}
Hence by (\ref{e:seqan1}) we obtain
\begin{equation}\label{e:DaRe10}
\limsup_{n \to \infty} \mathrm{E} I^{*}_{1,n} = 0.
\end{equation}
 Let $\gamma >0$ be arbitrary. Then $m/n \leq \gamma$ for large $n$. For these $n$ we have
\begin{eqnarray*}
 I^{*}_{2,n} & = & \sum_{k=j'}^{n}\delta_{(k/n, a_{n}^{-1}Y^{(k)})}(B) \leq  \sum_{k=j'}^{n}\delta_{(k/n, a_{n}^{-1}Y_{k-j'+1})}((b_{0},d_{0}] \times (b_{j'},d_{j'}])\\[0.4em]
 &=& \sum_{j=1}^{n-j'+1}\delta_{((j+j'-1)/n, a_{n}^{-1}Y_{j})}((b_{0},d_{0}] \times (b_{j'},d_{j'}])\\[0.4em]
  &\leq& \sum_{j=1}^{n}\delta_{(j/n, a_{n}^{-1}Y_{j})}((\widetilde{b}_{0},d_{0}] \times (b_{j'},d_{j'}]),
\end{eqnarray*}
where $\widetilde{b}_{0} = \max \{ b_{0}-\gamma,0\}$, and therefore
\begin{eqnarray*}
 I^{*}_{2,n} &\leq& \sum_{j=1}^{n}\delta_{(j/n, a_{n}^{-1}Y_{j})}((\widetilde{b}_{0},b_{0}] \times (b_{j'},d_{j'}]) + \sum_{j=1}^{n}\delta_{(j/n, a_{n}^{-1}Y_{j})}((b_{0},d_{0}] \times (b_{j'},d_{j'}])\\[0.4em]
 &=& \sum_{j=1}^{n}\delta_{(j/n, a_{n}^{-1}Y_{j})}((\widetilde{b}_{0},b_{0}] \times (b_{j'},d_{j'}]) + \widetilde{I}_{n}(B).
\end{eqnarray*}
Denoting
$ \widetilde{T}_{n,\gamma} = \sum_{j=1}^{n}\delta_{(j/n, a_{n}^{-1}Y_{j})}((\widetilde{b}_{0},b_{0}]\times (b_{j'},d_{j'}])$ we have
$$\widetilde{I}_{n}(B) + \widetilde{T}_{n,\gamma} -  I^{*}_{2,n} \geq 0,$$
and this yields
\begin{eqnarray}\label{e:DaRe11}
 \nonumber |\widetilde{I}_{n}(B) - I_{n}(B)| &=& |\widetilde{I}_{n}(B) - I^{*}_{1,n} - I^{*}_{2,n}| \leq  |(\widetilde{I}_{n}(B) + \widetilde{T}_{n,\gamma} - I^{*}_{2,n}) - \widetilde{T}_{n,\gamma}| +  I^{*}_{1,n}\\[0.4em]
 \nonumber  &\leq& |\widetilde{I}_{n}(B) + \widetilde{T}_{n,\gamma} - I^{*}_{2,n}| + \widetilde{T}_{n,\gamma} +  I^{*}_{1,n}
 = \widetilde{I}_{n}(B)
 + 2\widetilde{T}_{n,\gamma}- I^{*}_{2,n} +   I^{*}_{1,n}\\[0.4em]
 &=& \widetilde{I}_{n}(B) - I_{n}(B) + 2 \widetilde{T}_{n,\gamma} +  2I^{*}_{1,n}.
\end{eqnarray}
By Lemma~\ref{l:lemma1} it holds that
\begin{eqnarray*}
 \mathrm{E} \widetilde{T}_{n,\gamma} &=& \sum_{j=1}^{n}\delta_{j/n}((\widetilde{b}_{0},b_{0}]) \Pr \bigg(\frac{Y_{j}}{a_{n}} \in (b_{j'},d_{j'}] \bigg)\\[0.4em]
  &\to& Leb \times \nu ((\widetilde{b}_{0},b_{0}]\times (b_{j'},d_{j'}]) \leq \gamma \nu((b_{j'},d_{j'}]) \qquad \textrm{as} \ n \to \infty.
 \end{eqnarray*}
 This, together with relations (\ref{e:DaRe3}), (\ref{e:DaRe9}), (\ref{e:DaRe10}) and (\ref{e:DaRe11}) yields
  $$\limsup_{n \to \infty}\mathrm{E}|\widetilde{I}_{n}(B) - I_{n}(B)| \leq 2\gamma \nu((b_{j'},d_{j'}]).$$
  Letting $\gamma \to 0$ we see that
 \begin{equation*}
 \lim_{n \to \infty} \mathrm{E}|\widetilde{I}_{n}(B) - I_{n}(B)| = 0,
 \end{equation*}
 and therefore by an application of Markov's inequality we conclude that relation (\ref{e:DaRe8}) also holds in the case (C2).

 As noted in the proof of Theorem 2.2 in Davis and Resnick~\cite{DaRe85}, the mapping $\Psi \colon M_{p}([0,1] \times \EE ) \to M_{p}([0,1] \times ([-\infty, \infty]^{m} \setminus \{0\}))$ defined by
 $$ \Psi \bigg(\sum_{k=1}^{\infty} \delta_{(u_{k},v_{k})}\bigg) = \sum_{k=1}^{\infty} \sum_{j=1}^{m}\delta_{(u_{k},v_{k}e_{j})}$$
 is continuous, and hence by applying the continuous mapping theorem to the convergence result in Proposition~\ref{p:ppconvY} we obtain $\Psi(N^{*}_{n}) \dto \Psi(N^{*})$, i.e.~$\widetilde{I}_{n} \dto I$ as $n \to \infty$. From this and (\ref{e:DaRe8}), by Theorem 4.2 in Kallenberg~\cite{Ka83}, which can be applied since $S$ is a DC-semiring (cf.~Kallenberg~\cite{Ka83}, page 11), it follows that
 \begin{equation}\label{e:DaRe12}
 I_{n} \dto I \qquad \textrm{as} \ n \to \infty.
 \end{equation}
 As noted in the proof of Theorem 2.4 in Davis and Resnick~\cite{DaRe85}, the mapping
 $\Psi' \colon M_{p}([0,1] \times ([-\infty, \infty]^{m} \setminus \{0\})) \to M_{p}([0,1] \times \EE)$ defined by
 $$ \Psi' \bigg(\sum_{k=1}^{\infty} \delta_{(u_{k}, y_{k})}\bigg) = \sum_{k=1}^{\infty} \delta_{(u_{k}, \sum_{j=0}^{m-1}c_{j}y_{k}^{(j+1)})},$$
 where $y_{k}=(y_{k}^{(1)}, \ldots,y_{k}^{(m)}) \in [-\infty, \infty]^{m} \setminus \{0\}$, is continuous (cf.~Resnick~\cite{Re87}, page 235), and hence applying the continuous mapping theorem to the convergence in (\ref{e:DaRe12}) gives $\Psi'(I_{n}) \dto \Psi'(I)$ as $ n \to \infty$, that is
 \begin{equation}\label{e:DaRe13}
\sum_{k=1}^{n} \delta_{(k/n, a_{n}^{-1} \sum_{j=0}^{m-1}c_{j}Y_{k-j})} \dto \sum_{i=1}^{\infty} \sum_{j=0}^{m-1} \delta_{(T_{i}, P_{i}c_{j})}.
 \end{equation}
Observe that as $m \to \infty$
  \begin{equation}\label{e:DaRe14}
 \sum_{i=1}^{\infty} \sum_{j=0}^{m-1} \delta_{(T_{i}, P_{i}c_{j})} \to \sum_{i=1}^{\infty} \sum_{j=0}^{\infty} \delta_{(T_{i}, P_{i}c_{j})}
 \end{equation}
 pointwise in the vague metric $d_{v}$.
If we show that
 \begin{equation}\label{e:DaRe15}
 \lim_{m \to \infty} \limsup_{n \to \infty} \Pr \bigg[ \d_{v} \bigg( \sum_{k=1}^{n} \delta_{(k/n, a_{n}^{-1} \sum_{j=0}^{m-1}c_{j}Y_{k-j})}, \sum_{k=1}^{n} \delta_{(k/n, a_{n}^{-1}X_{k})} \bigg) > \epsilon \bigg]=0
 \end{equation}
 for any $\epsilon >0$, then from (\ref{e:DaRe13}) and (\ref{e:DaRe14}) by an application of a variant of Slutsky's theorem (see for example Resnick~\cite{Re87}, Lemma 4.25) it will follow that
 $$ N_{n} = \sum_{k=1}^{n} \delta_{(k/n, a_{n}^{-1}X_{k})} \dto N=\sum_{i=1}^{\infty} \sum_{j=0}^{\infty} \delta_{(T_{i}, P_{i}c_{j})} \qquad \textrm{as} \ n \to \infty.$$
 Because of the definition of $d_{v}$ relation (\ref{e:DaRe15}) will hold if for all $f \in C_{K}^{+}([0,1] \times \EE)$ and $\epsilon>0$
 \begin{equation}\label{e:DaRe16}
 \lim_{m \to \infty} \limsup_{n \to \infty} \Pr \bigg[ \bigg| \sum_{k=1}^{n} f \bigg( \frac{k}{n}, a_{n}^{-1} \sum_{j=0}^{m-1}c_{j}Y_{k-j} \bigg)- \sum_{k=1}^{n} f \bigg( \frac{k}{n}, a_{n}^{-1}X_{k} \bigg) \bigg| > \epsilon \bigg]=0
 \end{equation}
 (see Lemma~\ref{l:convprob} in Appendix A).

Since the support of $f$ is bounded away from origin it holds that
$f(t,x)=0$ for all $t \in [0,1]$ and $|x| \leq u$
 for some $u>0$.  Since $f$ is continuous on a compact set $[0,1] \times \{x \in \mathbb{E} : |x| \geq u/4 \}$, it is also uniformly continuous, and hence $\omega_{f,u}(\eta) \to 0$ as $\eta \to 0$, where
$$ \omega_{f,u}(\eta) = \sup \{ |f(t,x)-f(s,y)| : t,s \in [0,1],\,x,y \in \mathbb{E} \setminus (-u/4,u/4), d((t,x),(s,y)) \leq \eta \}$$
is the modulus of continuity of the function $f$ restricted to $[0,1] \times (\mathbb{E} \setminus (-u/4,u/4))$, and
$$d ((t,x), (s,y)) = \max \{ |t-s|, \rho(x,y) \}$$
is the metric on the direct product of metric spaces $[0,1]$ and $\mathbb{E}$
with $\rho$ defined in (\ref{e:metricrho}). Take $0 < \eta < \min\{u/4, 2/u\}$ and observe that on the event
$$ A(n,\eta) := \bigg\{ \frac{1}{a_{n}} \bigvee_{k=1}^{n} \bigg| \sum_{j=0}^{m-1}c_{j}Y_{k-j} - X_{k}\bigg| \leq \frac{\eta u^{2}}{8} \bigg\},$$
 $a_{n}^{-1} |\sum_{j=0}^{m-1}c_{j}Y_{k-j}| \leq u/2$ implies $f(k/n, a_{n}^{-1}\sum_{j=0}^{m-1}c_{j}Y_{k-j}) = f(k/n, a_{n}^{-1}X_{k})=0$, and
 $a_{n}^{-1} |\sum_{j=0}^{m-1}c_{j}Y_{k-j}| > u/2$ implies
 $$ \bigg|f \bigg( \frac{k}{n}, \frac{1}{a_{n}}\sum_{j=0}^{m-1}c_{j}Y_{k-j} \bigg) - f \bigg( \frac{k}{n}, \frac{X_{k}}{a_{n}} \bigg) \bigg| \leq \omega_{f,u}(\eta),$$
 since
 $$ \frac{|X_{k}|}{a_{n}} \geq \bigg| \frac{1}{a_{n}} \sum_{j=0}^{m-1}c_{j}Y_{k-j}\bigg| - \bigg|  \frac{1}{a_{n}} \sum_{j=0}^{m-1}c_{j}Y_{k-j} - \frac{X_{k}}{a_{n}} \bigg| \geq \frac{u}{2} - \frac{\eta u^{2}}{8} \geq \frac{u}{4}$$
 and
 $$ \rho \bigg(  \frac{1}{a_{n}} \sum_{j=0}^{m-1}c_{j}Y_{k-j}, \frac{X_{k}}{a_{n}} \bigg) \leq \frac{| a_{n}^{-1} \sum_{j=0}^{m-1}c_{j}Y_{k-j} - a_{n}^{-1}X_{k}|}{| a_{n}^{-1} \sum_{j=0}^{m-1}c_{j}Y_{k-j}| \cdot |a_{n}^{-1} X_{k}|} \leq \frac{\eta u^{2}/8}{(u/2) \cdot (u/4)}=\eta. $$
Therefore
\begin{eqnarray*}
  \Pr \bigg[ \bigg\{ \bigg| \sum_{k=1}^{n} f \bigg( \frac{k}{n}, a_{n}^{-1} \sum_{j=0}^{m-1}c_{j}Y_{k-j} \bigg)- \sum_{k=1}^{n} f \bigg( \frac{k}{n}, a_{n}^{-1}X_{k} \bigg) \bigg| > \epsilon \bigg\} \cap A(n, \eta) \bigg] &&\\[0.3em]
   & \hspace*{-54em} \leq & \hspace*{-27em} \Pr \bigg[  \omega_{f,u}(\eta) \sum_{k=1}^{n} \delta_{ (k/n, a_{n}^{-1} \sum_{j=0}^{m-1}c_{j}Y_{k-j}) } ([0,1] \times \{y : |y| > u/2\}) > \epsilon \bigg],
\end{eqnarray*}
and by (\ref{e:DaRe13}) this converges as $n \to \infty$ to
$$ \Pr \bigg[  \omega_{f,u}(\eta) \sum_{i=1}^{\infty} \sum_{j=0}^{m-1} \delta_{ (T_{i}, P_{i}c_{j}) } ([0,1] \times \{y : |y| > u/2\}) > \epsilon \bigg],$$
and as $m \to \infty$ by (\ref{e:DaRe14}) this converges to
$$ \Pr \bigg[  \omega_{f,u}(\eta) \sum_{i=1}^{\infty} \sum_{j=0}^{\infty} \delta_{ (T_{i}, P_{i}c_{j}) } ([0,1] \times \{y : |y| > u/2\}) > \epsilon \bigg].$$
Since $\sum_{i=1}^{\infty} \sum_{j=0}^{\infty} \delta_{ (T_{i}, P_{i}c_{j}) } ([0,1] \times \{y : |y| > u/2\}) < \infty$ a.s., the preceding probability tends to $0$ as $\eta \to 0$, i.e.~we have
\begin{equation}\label{e:DaRe17}
 \lim_{\eta \to 0} \lim_{m \to \infty} \limsup_{n \to \infty} \Pr \bigg[ \bigg\{ \bigg| \sum_{k=1}^{n} f \bigg( \frac{k}{n}, a_{n}^{-1} \sum_{j=0}^{m-1}c_{j}Y_{k-j} \bigg)- \sum_{k=1}^{n} f \bigg( \frac{k}{n}, a_{n}^{-1}X_{k} \bigg) \bigg| > \epsilon \bigg\} \cap A(n, \eta) \bigg]=0.
\end{equation}
Note that by (\ref{e:scaleparam1}) and stationarity of the sequence $(Z_{i})$ it holds that
\begin{eqnarray*}
\Pr(A(n,\eta)^{c}) &\leq& \sum_{k=1}^{n} \Pr \bigg( \frac{1}{a_{n}} \bigg| \sum_{j=0}^{m-1}c_{j}Y_{k-j} - X_{k}\bigg| > \frac{\eta u^{2}}{8} \bigg)
 = \sum_{k=1}^{n} \Pr \bigg( \frac{1}{a_{n}} \bigg| \sum_{j=m}^{\infty}c_{j}Y_{k-j}\bigg| > \frac{\eta u^{2}}{8} \bigg)\\[0.3em]
 &\leq & \sum_{k=1}^{n} \Pr \bigg( \frac{1}{a_{n}} \sum_{j=m}^{\infty}|c_{j}| \cdot |Z_{k-j}|  > \frac{\eta u^{2}}{8 \beta_{2}} \bigg)
 = n \Pr \bigg( \frac{1}{a_{n}} \sum_{j=m}^{\infty}|c_{j}| \cdot |Z_{-j}|  > \frac{\eta u^{2}}{8 \beta_{2}} \bigg).
\end{eqnarray*}
Applying Lemma~\ref{l:Cline} and taking into account (\ref{eq:niz}) we obtain
\begin{eqnarray*}
\limsup_{n \to \infty} \Pr(A(n,\eta)^{c}) &\leq& \limsup_{n \to \infty} n \Pr \bigg( \frac{1}{a_{n}} \sum_{j=m}^{\infty}|c_{j}| \cdot |Z_{-j}|  > \frac{\eta u^{2}}{8 \beta_{2}} \bigg)
= \Big( \frac{\eta u^{2}}{8 \beta_{2}} \Big)^{-\alpha} \sum_{j=m}^{\infty}|c_{j}|^{\alpha},
\end{eqnarray*}
and this by (\ref{e:momcond}) tends to $0$ as $m \to \infty$. Hence
\begin{equation*}
  \lim_{m \to \infty} \limsup_{n \to \infty} \Pr \bigg[ \bigg\{ \bigg| \sum_{k=1}^{n} f \bigg( \frac{k}{n}, a_{n}^{-1} \sum_{j=0}^{m-1}c_{j}Y_{k-j} \bigg)- \sum_{k=1}^{n} f \bigg( \frac{k}{n}, a_{n}^{-1}X_{k} \bigg) \bigg| > \epsilon \bigg\} \cap A(n, \eta)^{c} \bigg]=0,
\end{equation*}
and from this and (\ref{e:DaRe17}) we obtain (\ref{e:DaRe16}), proving the result.
\end{proof}

\section{Partial maxima processes}\label{s:partialmp}

At the beginning of this section we first recall the notion of Skorokhod $M_{1}$ topology.  For $x \in D([0,1],
\mathbb{R})$ the completed graph of $x$ is the set
\[
  \Gamma_{x}
  = \{ (t,z) \in [0,1] \times \mathbb{R} : z = \lambda x(t-) + (1-\lambda) x(t) \ \textrm{for some} \ \lambda \in [0,1] \},
\]
where $x(t-)$ is the left limit of $x$ at $t$.
We define an
order on the graph $\Gamma_{x}$ by saying that $(t_{1},z_{1}) \le
(t_{2},z_{2})$ if either (i) $t_{1} < t_{2}$ or (ii) $t_{1} = t_{2}$
and $|x_{j}(t_{1}-) - z_{1}| \le |x_{j}(t_{2}-) - z_{2}|$. A parametric representation
of the graph $\Gamma_{x}$ is a continuous nondecreasing function $(r,u)$
mapping $[0,1]$ onto $\Gamma_{x}$, with $r$ being the
time component and $u$ being the spatial component. Let $\Pi(x)$ denote the set of
parametric representations of the graph $\Gamma_{x}$. For $x_{1},x_{2}
\in D([0,1], \mathbb{R})$ define
\[
  d_{M_{1}}(x_{1},x_{2})
  = \inf \{ \|r_{1}-r_{2}\|_{[0,1]} \vee \|u_{1}-u_{2}\|_{[0,1]} : (r_{i},u_{i}) \in \Pi(x_{i}), i=1,2 \}.
\]
where $a \vee b= \max\{a,b\}$ for $a,b \in \mathbb{R}$, and $\|x\|_{[0,1]} = \sup \{ |x(t)| : t \in [0,1] \}$. The distance function $d_{M_{1}}$ is a metric on $D([0,1], \mathbb{R})$, and the induced topology is called the (standard or strong) Skorokhod $M_{1}$ topology.
For more discussion of the $M_{1}$ topology we refer to sections 12.3--12.5 in Whitt~\cite{Whitt02}.

Let $(Z_{i})_{i \in \mathbb{Z}}$ be a strictly stationary sequence of random variables satisfying the local dependence condition $(\ref{e:D'cond})$ and with regularly varying tail probabilities with index $\alpha > 0$ as specified by $(\ref{e:regvarpr1})$ and $(\ref{e:pq})$. Let $(\sigma_{i})_{i \in \mathbb{Z}}$ be a sequence of positive real numbers satisfying $(\ref{e:scaleparam1})$ and $(\ref{e:scaleparam2})$, and $(c_{j})_{j \geq 0}$ a sequence of real numbers satisfying $(\ref{e:momcond})$. Set
$$c_{+} = \max \{ c_{j} \vee 0 : j \geq 0\} \qquad \textrm{and} \qquad c_{-} = \max \{ -c_{j} \vee 0 : j \geq 0 \}.$$
Let $(X_{n})_{n}$ be a sequence of linear processes defined by
\begin{equation*}\label{e:linproc1sn}
X_{n} = \sum_{j=0}^{\infty}c_{j}\sigma_{n-j}Z_{n-j}, \qquad n \in \mathbb{N}.
\end{equation*}
For $n \in \mathbb{N}$ let
$$ W_{n}(t) =   \bigvee_{k=1}^{\lfloor nt \rfloor} \frac{X_{k}}{a_{n}},
   \qquad t \in [0,1],$$
with the convention $W_{n}(t) = X_{1}/a_{n}$ for $t \in [0, 1/n)$,
where $a_{n}$ as in (\ref{e:seqan}), i.e. $a_{n}$ is the $(1-n^{-1})$--quantile of the distribution function of $|Z_{1}|$,

In order to establish the functional convergence of $W_{n}$ we first represent this process as the image, under an appropriate maximum functional, of the
time-space point processes $N_n$ defined in (\ref{e:ppNn}). Then, using certain continuity properties of this
functional, by the continuous mapping theorem we
transfer the weak convergence of $N_n$ in (\ref{e:ppconvNn}) to
weak convergence of $W_n$.

Define the maximum functional
$ \Phi \colon M_{p}([0,1] \times \EE) \to D([0,1], \mathbb{R})$
by
$$ \Phi \Big( \sum_{i}\delta_{(t_{i}, x_{i})} \Big) (t)
  =   \bigvee_{t_{i} \leq t}x_{i} \vee 0, \qquad t \in [0,1],$$
  where the space $M_p([0,1] \times \EE)$ of Radon point
measures on $[0,1] \times \EE$ is equipped with the vague
topology, and we set for convenience $\vee \emptyset =0$. Let
\begin{equation*}
\Lambda  = \{ \eta \in M_{p}([0,1] \times \EE) :
   \eta ( \{0,1 \} \times \EE) = 0 = \eta ([0,1] \times \{ \pm \infty \}) \}.
\end{equation*}
Using the same arguments from the proof of Lemma 3.2 in Krizmani\'{c} \cite{Kr20} we obtain that the functional $\Phi$ is continuous on the set $\Lambda$ when $D([0,1], \mathbb{R})$ is endowed with the $M_{1}$ topology.
Note that the Poisson point process $N^{*}=\sum_{i=1}^{\infty}\delta_{(T_{i}, P_{i})}$ from Proposition~\ref{p:ppconvY} is almost surely contained in the set $\Lambda$ (cf.~Resnick \cite{Resnick07}, page 221), and therefore the limiting point process $N= \sum_{i=1}^{\infty}\sum_{j=0}^{\infty}\delta_{(T_{i}, P_{i}c_{j})}$ from Theorem~\ref{t:mainppc} also almost surely belongs to the set $\Lambda$.

In the theorem below we establish functional convergence of the process $W_{n}$, with the limit $W$ being an extremal process.
Recall that the distribution of a nonnegative extremal process $W$ is characterized by its exponent measure $\nu''$ in the following way:
$$ \PP (W(t) \leq x ) = e^{-t \nu''(x,\infty)}$$
for $t>0$ and $x>0$, where $\nu''$ is a measure on $(0,\infty)$ satisfying
$ \nu'' (\delta, \infty) < \infty$
for any $\delta >0$ (see Resnick~\cite{Resnick07}, page 161).

\begin{thm}\label{t:fconvmax}
Let $(Z_{i})_{i \in \mathbb{Z}}$ be a strictly stationary sequence of regularly varying random variables satisfying $(\ref{e:regvarpr1})$ and $(\ref{e:pq})$ with $\alpha >0$, such that the local dependence condition $(\ref{e:D'cond})$ holds. Let $(\sigma_{i})_{i \in \mathbb{Z}}$ be a sequence of positive real numbers satisfying $(\ref{e:scaleparam1})$ and $(\ref{e:scaleparam2})$, and $(c_{j})_{j \geq 0}$ a sequence of real numbers satisfying $(\ref{e:momcond})$. Assume the mixing condition $(\ref{e:mixcon})$ holds and $pc_{+} + qc_{-}>0$, with $p$ and $q$ as in $(\ref{e:pq})$. Then
$$W_{n} \dto W \qquad \textrm{as} \ n \to \infty,$$
in $D([0,1], \mathbb{R})$ endowed with the $M_{1}$ topology, where $W$
is an extremal process with exponent measure
\begin{equation*}
\nu^{*}(\rmd x)  = \sigma^{\alpha} (p c_{+}^{\alpha} + q c_{-}^{\alpha}) \alpha x^{-\alpha-1} 1_{(0,\infty)}(x)\,\rmd x.
\end{equation*}
\end{thm}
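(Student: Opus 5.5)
The plan is to combine Theorem~\ref{t:mainppc} with the continuity of the maximum functional $\Phi$ on $\Lambda$ and the continuous mapping theorem, and then to identify the law of $\Phi(N)$ as an extremal process with exponent measure $\nu^{*}$.

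\textbf{Step 1 (representation).} For $t \geq 1/n$ we have $\Phi(N_{n})(t) = \bigvee_{k \leq \lfloor nt \rfloor} X_{k}/a_{n} \vee 0 = W_{n}(t) \vee 0$. So $\Phi(N_n)$ differs from $W_n$ only through the truncation at $0$ and on $[0,1/n)$. I will show that
\[
d_{M_{1}}(W_{n}, \Phi(N_{n})) \Pto 0 .
\]
The time interval $[0,1/n)$ contributes only a vanishing time perturbation, plus the value $X_{1}/a_{n} \Pto 0$.

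For the truncation I use the assumption $pc_{+} + qc_{-} > 0$. Under it, $\max_{k \leq \lfloor n\gamma \rfloor} X_{k}/a_{n}$ is asymptotically positive for every $\gamma>0$, since the number of points of $N_{n}$ in $(0,\gamma] \times (\epsilon,\infty)$ converges to a Poisson variable with positive mean. Hence, with probability tending to one, $W_{n}(t) > 0$ for all $t \geq \gamma$. On $[0,\gamma)$ both $W_n$ and $\Phi(N_n)$ have small negative part, or are bounded by the maximum over $[0,\gamma]$. This gives a small $M_1$ distance, using a parametric-representation argument that matches the two paths after time $\gamma$. Letting $\gamma \to 0$ finishes this step, and by Slutsky's theorem it suffices to prove $\Phi(N_{n}) \dto W$.

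\textbf{Step 2 (continuous mapping).} Theorem~\ref{t:mainppc} gives $N_{n} \dto N$. As noted before the statement, $N \in \Lambda$ almost surely and $\Phi$ is $M_{1}$-continuous on $\Lambda$. The continuous mapping theorem therefore gives
\[
\Phi(N_{n}) \dto \Phi(N) = \bigvee_{T_{i} \leq t} \bigvee_{j} P_{i}c_{j} \vee 0 .
\]

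\textbf{Step 3 (identification).} For each $i$ we have $\bigvee_{j} P_{i}c_{j} = P_{i}c_{+}$ if $P_{i} > 0$ and $|P_{i}|c_{-}$ if $P_{i} < 0$. Hence $\Phi(N)(t) = \bigvee_{T_{i} \leq t} Q_{i}$, where
\[
\sum_{i}\delta_{(T_{i},Q_{i})}
\]
is the image of the Poisson process $N^{*}$ under $(t,x) \mapsto (t, x c_{+} 1_{x>0} + |x| c_{-}1_{x<0})$, with points at $0$ discarded. By the mapping theorem this image is a Poisson process with intensity $Leb \times \nu^{*}$, because
\[
\nu(\{x>0: xc_{+} > y\}) + \nu(\{x<0: |x|c_{-} > y\}) = \sigma^{\alpha}(pc_{+}^{\alpha} + qc_{-}^{\alpha}) y^{-\alpha}.
\]
It follows that $\Pr(\Phi(N)(t) \leq x) = e^{-t\nu^{*}(x,\infty)}$. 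More generally, the maximum of a Poisson process with intensity $Leb \times \nu^{*}$ is by definition the extremal process with exponent measure $\nu^{*}$ (Resnick~\cite{Resnick07}). Here $\nu^{*} \neq 0$ because $pc_{+}+qc_{-}>0$, so the limit is nondegenerate.

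\textbf{Main obstacle.} The expected difficulty is Step 1: controlling the discrepancy between $W_{n}$ and $\Phi(N_{n})$ caused by the truncation at $0$ and the initial convention, in the $M_{1}$ metric near time $0$. The first approach I would try is to show $\sup_{t} |W_{n}(t) \vee 0 - W_{n}(t)| \cdot 1_{\{t \geq \gamma\}} \to 0$ in probability using the positive-probability argument above. For $t<\gamma$, I would then construct parametric representations that move through the initial segment within time $\gamma$, using that $\Phi(N_n)(\gamma)$ is tight and small for small $\gamma$ in the limit.
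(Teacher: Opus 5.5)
Your skeleton matches the paper's: push $N_n \dto N$ through $\Phi$ by continuous mapping, identify $\Phi(N)$ as an extremal process, and pass from $\Phi(N_n)$ to $W_n$ by Slutsky. Step 3 differs only cosmetically. Your single map $x\mapsto xc_{+}1_{\{x>0\}}+|x|c_{-}1_{\{x<0\}}$, with points sent to $0$ discarded, is a slightly more economical version of the paper's route. The paper instead builds $N^{+}$ and $N^{-}$ separately via Lemma~\ref{l:ItoPRM}, uses their independence, and superposes them. Both routes give intensity $Leb\times\nu^{*}$.

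The real difference is Step 1, and there the obstacle you anticipate does not exist. Since $W_n$ is a running maximum started at $X_1/a_n$, you have $W_n(t)\geq X_1/a_n$ for all $t$. Hence for $t\geq 1/n$
\[
0\leq \Phi(N_n)(t)-W_n(t)=\max\{0,-W_n(t)\}\leq \frac{|X_1|}{a_n}1_{\{X_1<0\}},
\]
while on $[0,1/n)$ the difference is $|X_1|/a_n$. You were right to treat that interval separately: the paper's displayed bound overlooks the case $X_1>0$ there, but the bound $|X_1|/a_n$ covers it. Therefore
\[
d_{M_1}(W_n,\Phi(N_n))\leq\sup_{t}|W_n(t)-\Phi(N_n)(t)|\leq |X_1|/a_n\to 0 \quad \textrm{almost surely}.
\]
This is essentially the paper's argument. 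It needs no splitting at $\gamma$, no parametric representations, and no use of $pc_{+}+qc_{-}>0$; in the paper that hypothesis only ensures $\nu^{*}\neq 0$.

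Your route can be completed, since the ``small negative part on $[0,\gamma)$'' you invoke is exactly this bound, and it holds on all of $[0,1]$. As written, though, your positivity claim is imprecise. For fixed $\epsilon$, the number of points of $N$ in $(0,\gamma]\times(\epsilon,\infty]$ is a cluster Poisson count, not a plain Poisson one. The probability that it is positive tends to $1-\exp\{-\gamma\nu^{*}((\epsilon,\infty))\}<1$, not to $1$. You would still need to let $\epsilon\downarrow 0$, using $\nu^{*}((\epsilon,\infty))\to\infty$, to obtain $\Pr(W_n(\gamma)>0)\to 1$.
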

\begin{proof}
Since the maximum functional $\Phi$ is continuous on the set $\Lambda$, and this set almost surely contains the point process $N$ from Theorem~\ref{t:mainppc}, the continuous mapping theorem applied to the convergence in (\ref{e:ppconvNn}) yields
$\Phi^{(u)}(N_{n}) \dto \Phi^{(u)}(N)$ in $D([0,1], \mathbb{R})$ under the $M_{1}$ topology, that is
\begin{equation}\label{e:mainconvC2max}
        W_{n}^{*}(\,\cdot\,) := \bigvee_{i = 1}^{\lfloor n \, \cdot \, \rfloor} \frac{X_{i}}{a_{n}} \vee 0 \ \dto \ \bigvee_{T_{i} \le \cdot} \bigvee_{j \geq 0}P_{i}c_{j} \vee 0 \qquad \textrm{as} \ n \to \infty.
\end{equation}
By Proposition 5.2 in Resnick~\cite{Resnick07} the process
$$N^{+} := \sum_{i} \delta_{(T_{i}, P_{i} 1_{\{ P_{i} > 0\}}c_{+})}$$
 is a Poisson process with intensity measure
$ Leb \times \nu^{+}$, where
$$ \nu^{+}(\rmd x)  = \sigma^{\alpha} p c_{+}^{\alpha} \alpha x^{-\alpha-1} 1_{(0,\infty)}(x)\,\rmd x$$
and 
$$N^{-} := \sum_{i} \delta_{(T_{i}, - P_{i} 1_{\{ P_{i} < 0\}}c_{-})}$$
 is a Poisson process with intensity measure
$ Leb \times \nu^{-}$, where
$$ \nu^{-}(\rmd x)  = \sigma^{\alpha} q c_{-}^{\alpha} \alpha x^{-\alpha-1} 1_{(0,\infty)}(x)\,\rmd x$$
(for details see Lemma~\ref{l:ItoPRM} in Appendix A).
Hence
$$ W^{+}(\,\cdot\,) := \bigvee_{T_{i} \le \cdot} P_{i} 1_{\{P_{i}>0\}}c_{+} \qquad \textrm{and} \qquad
 W^{-}(\,\cdot\,) := \bigvee_{T_{i} \le \cdot} \Big( -P_{i} 1_{\{P_{i}<0\}}c_{-} \Big)
$$
are extremal processes with exponent measures $\nu^{+}$ and $\nu^{-}$, respectively (see Resnick~\cite{Resnick07}, page 161).

Note that the processes $N^{+}$ and $N^{-}$ are independent, since the first comes from those points $(T_{i}, P_{i})$ of the Poisson process $\sum_{i=1}^{\infty}\delta_{(T_{i},P_{i})}$ with $P_{i}>0$, and the second from points with $P_{i}<0$. Hence $N^{+}+N^{-}$ is a Poisson process with intensity measure $Leb \times (\nu^{+}+\nu^{-})$ (cf.~the proof of Proposition 4.28 in Resnick~\cite{Re87}), and
$$ W(\,\cdot\,) := \bigvee_{T_{i} \le \cdot} \Big[P_{i}1_{\{ P_{i}> 0\}}c_{+} \vee \Big(-P_{i}1_{\{ P_{i}< 0\}}c_{-} \Big) \Big]$$
is an extremal process with exponent measure $\nu^{+}+\nu^{-}=\nu^{*}$ (see Resnick~\cite{Resnick07}, page 161).
Since for nonnegative real numbers $B, b_{1}, b_{2}, \ldots$ it holds that
$$ \bigvee_{j \geq 1} B b_{j} = B \bigvee_{j \geq 1} b_{j},$$ we have
\begin{eqnarray*}
 W(\,\cdot\,) &=& \bigvee_{T_{i} \le \cdot} \bigg[ \bigg( P_{i}1_{\{ P_{i}> 0\}} \bigvee_{j \geq 0}c_{j} 1_{\{c_{j}>0\}} \bigg) \vee \bigg(-P_{i}1_{\{ P_{i}< 0\}} \bigvee_{j \geq 0} (-c_{j} 1_{\{c_{j}<0\}}) \bigg)\bigg]\\[0.4em]
  &=& \bigvee_{T_{i} \le \cdot} \bigg[ \bigg( \bigvee_{j \geq 0} P_{i}1_{\{ P_{i}> 0\}} c_{j} 1_{\{c_{j}>0\}} \bigg) \vee \bigg( \bigvee_{j \geq 0} P_{i}1_{\{ P_{i}< 0\}} c_{j} 1_{\{c_{j}<0\}} \bigg)\bigg]\\[0.4em]
  &=& \bigvee_{T_{i} \le \cdot} \bigvee_{j \geq 0}P_{i}c_{j} \vee 0,
 \end{eqnarray*}
 which shows that the extremal process $W$ is in fact the limiting process in (\ref{e:mainconvC2max}).

If we show that
$$\lim_{n \to \infty} \Pr (d_{M_{1}}(W_{n}, W_{n}^{*}) > \epsilon)=0$$
for any $\epsilon >0$, from (\ref{e:mainconvC2max}) by Slutsky's theorem (see Theorem 3.4 in Resnick~\cite{Resnick07}) it will follow that
$ W_{n} \dto W$ as $n \to \infty$,
in $D([0,1], \mathbb{R})$ with the $M_{1}$ topology. Using the fact that the
 metric $d_{M_{1}}$ on $D([0,1], \mathbb{R})$ is bounded above by the uniform metric and the fact that
 $$\bigg| \bigvee_{i=1}^{\lfloor nt \rfloor}\frac{X_{i}}{a_{n}} - \bigvee_{i=1}^{ \lfloor nt \rfloor }\frac{X_{i}}{a_{n}} \vee 0 \bigg| \leq \frac{|X_{1}|}{a_{n}} 1_{\{ X_{1} < 0 \}},$$
 we obtain
\begin{eqnarray}\label{e:slutsky}
  \nonumber \Pr (d_{M_{1}}(W_{n}, W_{n}^{*}) > \epsilon) & \leq &
   \Pr \bigg(
     \sup_{t \in [0,1]} |W_{n}(t) - W_{n}^{*}(t)| >  \epsilon \bigg)\\[0.4em]
   \nonumber &=& \Pr \bigg( \sup_{t \in [0,1]} \bigg| \bigvee_{i=1}^{\lfloor nt \rfloor}\frac{X_{i}}{a_{n}} - \bigvee_{i=1}^{ \lfloor nt \rfloor }\frac{X_{i}}{a_{n}} \vee 0 \bigg| > \epsilon  \bigg)\\[0.4em]
   & \leq & \Pr \bigg( \frac{|X_{1}|}{a_{n}} 1_{\{ X_{1} < 0 \}} > \epsilon \bigg)
 \end{eqnarray}
  Since
 $$ \frac{|X_{1}|}{a_{n}} 1_{\{X_{1} < 0\}} \to 0$$
 almost surely as $n \to \infty$, it follows that
 \begin{equation*}
   \PP \bigg( \frac{|X_{1}|}{a_{n}} 1_{\{ X_{1} < 0 \}} > \epsilon
       \bigg) \to 0 \qquad \textrm{as} \ n \to \infty,
 \end{equation*}
and hence from (\ref{e:slutsky}) we get
$$\lim_{n \to \infty} \Pr (d_{M_{1}}(W_{n}, W_{n}^{*}) > \epsilon)=0,$$
and this concludes the proof.
\end{proof}

\appendix
\section{}

Here we first show that the mixing condition $\mathcal{A}'$, that is relation (\ref{e:mixcon}) is implied by the strong mixing property.
Recall the notion of strong mixing. On a given probability space $(\Omega, \mathcal{F}, \Pr)$, for two $\sigma$--fields $\mathcal{D}_{1}, \mathcal{D}_{2} \subseteq \mathcal{F}$ define
$$ \alpha (\mathcal{D}_{1}, \mathcal{D}_{2}) = \sup \{ |\Pr(D_{1} \cap D_{2}) - \Pr(D_{1}) \Pr(D_{2})| :
   D_{1} \in \mathcal{D}_{1}, D_{2} \in \mathcal{D}_{2} \}.$$
We say that a sequence
of random variables $(\xi_{n})_{n \in \mathbb{Z}}$ satisfies the
strong mixing or $\alpha$-mixing condition if
$$ \alpha_{n} = \sup_{j \in \mathbb{Z}} \alpha (\mathcal{F}_{-\infty}^{j}, \mathcal{F}_{j+n}^{\infty})  \to 0 \qquad \textrm{as} \ n \to \infty,$$
 where
$\mathcal{F}_{-\infty}^{k}= \sigma (\{ \ldots , \xi_{k-1}, \xi_{k} \})$
 and
 $\mathcal{F}_{k}^{\infty}= \sigma (\{\xi_{k}, \xi_{k+1},\ldots  \})$ for $k \in \mathbb{Z}$. The numbers $\alpha_{n}$ are called the strong mixing coefficients. Note that when the sequence $(\xi_{n})$ is strictly stationary, one has simply $\alpha_{n} = \alpha (\mathcal{F}_{-\infty}^{0}, \mathcal{F}_{n}^{\infty})$.

 Let $(Z_{i})_{i \in \mathbb{Z}}$ be a strictly stationary sequence of random variables with regularly varying tail probabilities as specified by $(\ref{e:regvarpr1})$ and $(\ref{e:pq})$, and let $(\sigma_{i})_{i \in \mathbb{Z}}$ be a sequence of positive real numbers satisfying $(\ref{e:scaleparam1})$. Set $Y_{i}= \sigma_{i}Z_{i}$ for $i \in \mathbb{Z}$. Let $\mathbb{E}= [-\infty, \infty] \setminus \{0\}$.

\begin{lemma}\label{l:A'strongmix}
If $(Z_{i})_{i}$ is strongly mixing, then condition $\mathcal{A}'$ holds, i.e.~there exists a sequence of positive integers $(r_{n})_{n}$ such that $r_{n} \to \infty$ and $r_{n}/n \to 0$ as $n \to \infty$, such that for all $f \in C_{K}^{+}([0,1] \times \mathbb{E})$, as $n \to \infty$,
$$ \E \biggl[ \exp \biggl\{ - \sum_{i=1}^{n} f \biggl(\frac{i}{n}, \frac{Y_{i}}{a_{n}}
 \biggr) \biggr\} \biggr]
 - \prod_{k=1}^{k_{n}} \E \biggl[ \exp \biggl\{ - \sum_{i=1}^{r_{n}} f \biggl(\frac{kr_{n}}{n}, \frac{Y_{i+(k-1)r_{n}}}{a_{n}} \biggr) \biggr\} \biggr] \to 0,$$
where $k_{n}= \lfloor n/r_{n} \rfloor$ and $(a_{n})_{n}$ is a sequence of positive real numbers tending to infinity such that
$(\ref{e:seqan})$ holds.
\end{lemma}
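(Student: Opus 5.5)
The plan is the standard blocking argument with small gaps removed between blocks. Strong mixing of $(Z_i)$ is inherited by $(Y_i)$ with the same coefficients, because $Y_i=\sigma_i Z_i$ with deterministic $\sigma_i$ generates the same $\sigma$--fields. First I choose $r_n$ and a gap sequence $l_n$ with
\[
l_n\to\infty,\qquad l_n/r_n\to 0,\qquad r_n/n\to 0,\qquad k_n\alpha_{l_n}\to 0 .
\]
Such choices exist: take $l_n\to\infty$ slowly enough that $n\alpha_{l_n}\to 0$ is not needed but $\sqrt{n\alpha_{l_n}}\to0$ is, and put $r_n=\max\{\lfloor\sqrt{n\,\max(\alpha_{l_n},1/n)}\,\rfloor\cdot\ldots\}$. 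Concretely, I use the usual diagonal choice as in Leadbetter--Lindgren--Rootz\'en: pick $r_n=\lfloor n\, \max\{\alpha_{l_n}^{1/2}, n^{-1/2}\}\rfloor$ with $l_n$ chosen so that $l_n=o(r_n)$. Then $k_n\alpha_{l_n}\le n\alpha_{l_n}/r_n\approx \alpha_{l_n}^{1/2}\to0$.

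\textbf{Step 1 (time discretization).} Replace $f(i/n,\cdot)$ by $f(kr_n/n,\cdot)$ for $i$ in block $k$ inside the full Laplace functional, and drop the indices $k_nr_n<i\le n$. The difference of the exponentials is bounded by
\[
\E\sum_i |f(i/n,Y_i/a_n)-f(kr_n/n,Y_i/a_n)| + \E\sum_{i>k_nr_n} f(i/n,Y_i/a_n)
\]
(using $|e^{-a}-e^{-b}|\le|a-b|$). Here $f$ vanishes for $|x|\le u$. The first term is at most $\omega_{f,u}(r_n/n)\,n\Pr(|Z_1|>ua_n/\beta_2)\to0$, and the second is at most $\|f\|_\infty r_n\Pr(|Z_1|>ua_n/\beta_2)\to0$, exactly as in the proof of Lemma~\ref{l:lemma2}(ii).

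\textbf{Step 2 (removing gaps).} Within each block, delete the last $l_n$ indices. The error is at most $\|f\|_\infty k_nl_n\Pr(|Z_1|>ua_n/\beta_2)\le \|f\|_\infty (l_n/r_n)\, n\Pr(|Z_1|>ua_n/\beta_2)\to0$. The same deletion applies inside each factor of the product: each factor changes by at most $c\,l_n\Pr(\cdot)$, and a product of numbers in $[0,1]$ changes by at most the sum of the changes, giving the same bound.

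\textbf{Step 3 (factorization).} The shortened blocks are separated by gaps of length $l_n$. Let $\xi_k\in[0,1]$ be the exponential of block $k$; it is measurable with respect to the $\sigma$--field of its own block. The standard inequality
\[
|\E\,\xi\eta-\E\,\xi\,\E\,\eta|\le 4\alpha(\mathcal{D}_1,\mathcal{D}_2)
\]
for $[0,1]$--valued $\xi\in\mathcal{D}_1$, $\eta\in\mathcal{D}_2$ then applies. Peeling off one block at a time by induction gives
\[
\Big|\E\prod_{k}\xi_k-\prod_k\E\,\xi_k\Big|\le 4k_n\alpha_{l_n}\to0 .
\]
Combining Steps 1--3 yields the claim. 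The main obstacle is the simultaneous choice of $r_n,l_n$ in the first paragraph, which has to satisfy all four conditions at once. Everything else reduces to the tail bound $n\Pr(|Z_1|>ua_n)\to u^{-\alpha}$ from (\ref{e:seqan1}).
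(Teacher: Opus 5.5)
Your proof is correct and follows the same blocking scheme as the paper: discard the remainder $k_nr_n<i\le n$, delete gaps of length $l_n$ at the end of each block, factorize block by block with the covariance inequality $|\E\,gh-\E g\,\E h|\le 4\alpha$, and control every error by a multiple of $n\Pr(|Z_1|>ua_n/\beta_2)$. The only real difference is that you do the time discretization $i/n\mapsto kr_n/n$ once, globally, via $\omega_{f,u}(r_n/n)$, whereas the paper does it inside each peeling step (its term $V_6$). Your first attempt at defining $r_n$ is garbled, but the concrete choice $r_n=\lfloor n\max\{\alpha_{l_n}^{1/2},n^{-1/2}\}\rfloor$ works ($r_n\ge\lfloor\sqrt n\rfloor$, $r_n/n\to0$, $k_n\alpha_{l_n}=O(\alpha_{l_n}^{1/2})\to0$) provided you fix $l_n\to\infty$ with $l_n=o(\sqrt n)$ beforehand; this removes the apparent circularity in ``$l_n=o(r_n)$'' and plays the role of the paper's $l_n=o(n^{1/8})$, $r_n\ge n^{2/3}$.
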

\begin{proof}
Since the sequence $(\sigma_{i})_{i}$ is deterministic, $\sigma(Y_{i})=\sigma(Z_{i})$ and hence the process $(Y_{i})_{i}$ is also strongly mixing with the same strong mixing coefficients as the process $(Z_{i})_{i}$.

Let $(l_{n})_{n \in \mathbb{N}}$ be an arbitrary sequence of positive
integers such that $l_{n} \to \infty$ as $n \to \infty$ and
 $l_{n} = o(n^{1/8})$, where $b_{n}=o(d_{n})$ means $b_{n}/d_{n} \to 0$ as $n \to \infty$.
 Put
 \begin{equation}\label{e:rn1}
 r_{n} = \lfloor \max \{ n \sqrt{ \alpha_{l_{n}+1}},\,n^{2/3} \} \rfloor
    +1, \qquad n \in \mathbb{N},
 \end{equation}
 where $(\alpha_{n})_{n}$ is the sequence of strong mixing coefficients of both sequences $(Z_{i})_{i}$ and $(Y_{i})_{i}$.
 Then $r_{n} \to \infty$ as $n \to \infty$.
 Since the sequence $(Z_{i})$ is strongly mixing,
  $\alpha_{l_{n}+1} \to 0$ as $n \to \infty$, and therefore
   $r_{n}/n \to 0$ as $n \to \infty$.
   Hence it follows that  $k_{n}
  \to \infty$ and
\begin{equation}\label{e:strmix1}
    k_{n} \alpha_{l_{n}+1} \to 0 \qquad \textrm{and} \qquad
    \frac{k_{n}l_{n}}{n} \to 0.
\end{equation}
Fix $f \in C_{K}^{+}([0,1] \times \mathbb{E})$. We have to show that
$V(n) \to 0$ as $n \to \infty$, where
$$ V(n) =  \bigg| \mathrm{E}  \bigg[ \exp \bigg\{ - \sum_{i=1}^{n} f \bigg(\frac{i}{n}, \frac{Y_{i}}{a_{n}}
 \bigg) \bigg\} \bigg]
 - \prod_{k=1}^{k_{n}} \mathrm{E} \bigg[ \exp \bigg\{ - \sum_{i=1}^{r_{n}} f \bigg(\frac{kr_{n}}{n}, \frac{Y_{i+(k-1)r_{n}}}{a_{n}} \bigg)
 \bigg\} \bigg] \bigg|.$$
 We have
\begin{eqnarray}\label{e:I}
  \nonumber V(n) & \leq & \bigg| \mathrm{E} \bigg[ \exp \bigg\{ - \sum_{i=1}^{n} f \bigg(\frac{i}{n}, \frac{Y_{i}}{a_{n}}
 \bigg) \bigg\} \bigg]
 - \mathrm{E} \bigg[ \exp \bigg\{ - \sum_{i=1}^{k_{n}r_{n}} f \bigg(\frac{i}{n}, \frac{Y_{i}}{a_{n}} \bigg)
 \bigg\} \bigg] \bigg| \\[0.5em]
   \nonumber & & \hspace*{-3.7em} + \ \bigg| \mathrm{E} \bigg[  \exp \bigg\{ - \sum_{i=1}^{k_{n}r_{n}} f \bigg(\frac{i}{n}, \frac{Y_{i}}{a_{n}}
 \bigg) \bigg\} \bigg]
 - \mathrm{E} \bigg[ \exp \bigg\{ - \sum_{k=1}^{k_{n}} \sum_{i=(k-1)r_{n}+1}^{kr_{n}-l_{n}}
   f \bigg(\frac{i}{n}, \frac{Y_{i}}{a_{n}} \bigg)
 \bigg\} \bigg] \bigg| \\[0.5em]
 \nonumber & &  \hspace*{-3.7em} + \ \bigg| \mathrm{E} \bigg[ \exp \bigg\{ - \sum_{k=1}^{k_{n}} \sum_{i=(k-1)r_{n}+1}^{kr_{n}-l_{n}}
    f \bigg(\frac{i}{n}, \frac{Y_{i}}{a_{n}} \bigg) \bigg\} \bigg]
 - \prod_{k=1}^{k_{n}} \mathrm{E} \bigg[ \exp \bigg\{ - \sum_{i=1}^{r_{n}-l_{n}} f \bigg(\frac{kr_{n}}{n}, \frac{Y_{i+(k-1)r_{n}}}{a_{n}} \bigg)
 \bigg\} \bigg] \bigg| \\[0.5em]
   \nonumber & & \hspace*{-3.7em} + \ \bigg| \prod_{k=1}^{k_{n}} \mathrm{E} \bigg[ \exp \bigg\{ - \sum_{i=1}^{r_{n}-l_{n}} f \bigg(\frac{kr_{n}}{n}, \frac{Y_{i+(k-1)r_{n}}}{a_{n}}
 \bigg) \bigg\} \bigg]
 - \prod_{k=1}^{k_{n}} \mathrm{E} \bigg[ \exp \bigg\{ - \sum_{i=1}^{r_{n}} f \bigg(\frac{kr_{n}}{n}, \frac{Y_{i+(k-1)r_{n}}}{a_{n}} \bigg)
 \bigg\} \bigg] \bigg| \\[1em]
   & =: & V_{1}(n) + V_{2}(n) + V_{3}(n) + V_{4}(n).
\end{eqnarray}
 The function $f$ is nonnegative, bounded (by $M>0$ let us suppose)
 and its support is bounded away from origin, which implies that $f(t,x)=0$ for all $t \in
 [0,1]$ and $ |x| \leq \delta $ for some $\delta >0$. Denote by
 $j_{n}=n-k_{n}r_{n}$. Then using the inequality
 $1-e^{-x} \leq x$ for any $x \geq 0$, relation $(\ref{e:scaleparam1})$ and stationarity of the sequence $(Z_{i})_{i}$ we obtain
 \begin{eqnarray}\label{e:I1}
   \nonumber V_{1}(n) & \leq & \mathrm{E} \bigg[ \exp \bigg\{ - \sum_{i=1}^{k_{n}r_{n}}
                                    f \bigg(\frac{i}{n}, \frac{Y_{i}}{a_{n}} \bigg) \bigg\} \cdot \bigg|
                                    1 - \exp \bigg\{ - \sum_{i=k_{n}r_{n}+1}^{n}
                                    f \bigg(\frac{i}{n}, \frac{Y_{i}}{a_{n}} \bigg) \bigg\} \bigg| \bigg]
                                    \\[0.5em]
   \nonumber & \leq & \mathrm{E} \bigg[ \sum_{i=k_{n}r_{n}+1}^{n} f \bigg(\frac{i}{n}, \frac{Y_{i}}{a_{n}}
                   \bigg) \bigg] =   \sum_{i=k_{n}r_{n}+1}^{n} \mathrm{E} \bigg[ f \bigg(\frac{i}{n}, \frac{Y_{i}}{a_{n}} \bigg)
                   1_{\big\{ \frac{|Y_{i}|}{a_{n}} > \delta \big\} }
                   \bigg] \\[0.5em]
   \nonumber & \leq & M  \sum_{i=k_{n}r_{n}+1}^{n} \mathrm{P} (|Y_{i}|>\delta a_{n}) \leq M  \sum_{i=k_{n}r_{n}+1}^{n} \mathrm{P} (\beta_{2} |Z_{i}|>\delta a_{n})\\[0.6em]
    & = & M j_{n} \mathrm{P} (|Z_{1}|> \beta_{2}^{-1} \delta a_{n}).
 \end{eqnarray}
In a similar manner we obtain
\begin{eqnarray}\label{e:I2}
   \nonumber V_{2}(n) &\leq&   \mathrm{E} \bigg[ \exp \bigg\{ - \sum_{k=1}^{k_{n}} \sum_{i=(k-1)r_{n}+1}^{kr_{n}-l_{n}}
                                    f \bigg(\frac{i}{n}, \frac{Y_{i}}{a_{n}} \bigg) \bigg\} \cdot \bigg|
                                    1 - \exp \bigg\{ - \sum_{k=1}^{k_{n}}\sum_{i=kr_{n}-l_{n}+1}^{kr_{n}}
                                    f \bigg(\frac{i}{n}, \frac{Y_{i}}{a_{n}} \bigg) \bigg\} \bigg| \bigg]
                                    \\[0.5em]
     &\leq& M \sum_{k=1}^{k_{n}}\sum_{i=kr_{n}-l_{n}+1}^{kr_{n}} \mathrm{P} (\beta_{2} |Z_{i}|>\delta a_{n}) =  M k_{n}l_{n} \mathrm{P} (|Z_{1}|> \beta_{2}^{-1}\delta a_{n}).
\end{eqnarray}
Further we have
$$ V_{3}(n) \leq V_{5}(n) + V_{6}(n) + V_{7}(n),$$
where
\begin{eqnarray*}
V_{5}(n) & = & \bigg| \mathrm{E} \bigg[ \exp \bigg\{ - \sum_{k=1}^{k_{n}} \sum_{i=(k-1)r_{n}+1}^{kr_{n}-l_{n}}
      f \bigg( \frac{i}{n}, \frac{Y_{i}}{a_{n}} \bigg) \bigg\} \bigg] \\[0.3em]
   & &  \hspace*{-2.2em} - \ \mathrm{E} \bigg[ \exp \bigg\{ - \sum_{i=1}^{r_{n}-l_{n}} f \bigg(\frac{i}{n}, \frac{Y_{i}}{a_{n}} \bigg) \bigg\} \bigg]
     \mathrm{E} \bigg[ \exp \bigg\{ - \sum_{k=2}^{k_{n}} \sum_{i=(k-1)r_{n}+1}^{kr_{n}-l_{n}}
    f \bigg(\frac{i}{n}, \frac{Y_{i}}{a_{n}} \bigg) \bigg\} \bigg] \bigg|,
\end{eqnarray*}
\begin{eqnarray*}
V_{6}(n) & = & \bigg| \mathrm{E} \bigg[ \exp \bigg\{ - \sum_{i=1}^{r_{n}-l_{n}} f \bigg(\frac{i}{n}, \frac{Y_{i}}{a_{n}} \bigg) \bigg\} \bigg]
     \mathrm{E} \bigg[ \exp \bigg\{ - \sum_{k=2}^{k_{n}} \sum_{i=(k-1)r_{n}+1}^{kr_{n}-l_{n}}
     f \bigg(\frac{i}{n}, \frac{Y_{i}}{a_{n}} \bigg) \bigg\} \bigg]\\[0.3em]
   & & \hspace*{-2.2em} - \ \mathrm{E} \bigg[ \exp \bigg\{ - \sum_{i=1}^{r_{n}-l_{n}} f \bigg(\frac{1 \cdot r_{n}}{n}, \frac{Y_{i}}{a_{n}} \bigg) \bigg\} \bigg]
     \mathrm{E} \bigg[ \exp \bigg\{ - \sum_{k=2}^{k_{n}} \sum_{i=(k-1)r_{n}+1}^{kr_{n}-l_{n}}
     f \bigg(\frac{i}{n}, \frac{Y_{i}}{a_{n}} \bigg) \bigg\} \bigg]\bigg|,
\end{eqnarray*}
and
\begin{eqnarray*}
V_{7}(n) & = & \bigg| \mathrm{E} \bigg[ \exp \bigg\{ - \sum_{i=1}^{r_{n}-l_{n}} f \bigg(\frac{1 \cdot r_{n}}{n}, \frac{Y_{i}}{a_{n}} \bigg) \bigg\} 
     \mathrm{E} \bigg[ \exp \bigg\{ - \sum_{k=2}^{k_{n}} \sum_{i=(k-1)r_{n}+1}^{kr_{n}-l_{n}}
     f \bigg(\frac{i}{n}, \frac{Y_{i}}{a_{n}} \bigg) \bigg\} \bigg]\\[0.3em]
  & & \hspace*{-2.2em} - \ \prod_{k=1}^{k_{n}} \mathrm{E} \bigg[ \exp \bigg\{ - \sum_{i=1}^{r_{n}-l_{n}}
      f \bigg(\frac{kr_{n}}{n}, \frac{Y_{i+(k-1)r_{n}}}{a_{n}} \bigg) \bigg\} \bigg]\bigg|.
\end{eqnarray*}
The inequality
$ | \mathrm{E} (gh) - \mathrm{E} g\,\mathrm{E} h| \leq 4C_{1}C_{2} \alpha_{m}$,
for a $\mathcal{F}_{-\infty}^{j}$ measurable function $g$ and a
$\mathcal{F}_{j+m}^{\infty}$ measurable function $h$ such that $|g|
\leq C_{1}$ and $|h| \leq C_{2}$ (Lin and Lu~\cite{LiLu97}, Lemma 1.2.1), gives
\begin{equation}\label{e:I5}
    V_{5}(n) \leq 4 \alpha_{l_{n}+1}.
\end{equation}
For any $t>0$ there exists a constant $C(t)>0$ such that the
following inequality holds:
$$ |1-e^{-x}| \leq C(t)|x| \qquad \textrm{for all} \ |x| \leq t.$$
Further, for arbitrary real numbers $z_{1}, \ldots, z_{n}$ and $w_{1}, \ldots, w_{n}$ it holds that
\begin{equation}\label{eq:durrett}
\bigg| \prod_{k=1}^{n}z_{k} - \prod_{k=1}^{n}w_{k} \bigg| \leq A^{n-1} \sum_{k=1}^{n}|z_{k}-w_{k}|
\end{equation}
where $A= \max \{|z_{1}|,\ldots, |z_{n}|, |w_{1}|,\ldots, |w_{n}| \}$.
These last two inequalities imply
\begin{eqnarray*}
  V_{6}(n) & \leq & \mathrm{E} \bigg| \exp \bigg\{ - \sum_{i=1}^{r_{n}-l_{n}}
       f \bigg(\frac{i}{n}, \frac{Y_{i}}{a_{n}} \bigg) \bigg\} - \exp \bigg\{ - \sum_{i=1}^{r_{n}-l_{n}}
       f \bigg(\frac{r_{n}}{n}, \frac{Y_{i}}{a_{n}} \bigg) \bigg\} \bigg|
       \\[0.3em]
   & \leq &  \sum_{i=1}^{r_{n}-l_{n}} \mathrm{E} \bigg| \exp \bigg\{ -f \bigg(\frac{i}{n}, \frac{Y_{i}}{a_{n}} \bigg) \bigg\}
       - \exp \bigg\{ -f \bigg(\frac{r_{n}}{n}, \frac{Y_{i}}{a_{n}} \bigg) \bigg\}
       \bigg|\\[0.3em]
   & \leq & \sum_{i=1}^{r_{n}-l_{n}} \mathrm{E} \bigg| 1
       - \exp \bigg\{ f \bigg(\frac{i}{n}, \frac{Y_{i}}{a_{n}} \bigg) -
       f \bigg(\frac{r_{n}}{n}, \frac{Y_{i}}{a_{n}} \bigg) \bigg\}
       \bigg|\\[0.3em]
   & \leq & C(2M) \sum_{i=1}^{r_{n}-l_{n}} \mathrm{E} \bigg| f \bigg(\frac{i}{n}, \frac{Y_{i}}{a_{n}} \bigg) -
       f \bigg(\frac{r_{n}}{n}, \frac{Y_{i}}{a_{n}} \bigg)
       \bigg|.
\end{eqnarray*}
Therefore
\begin{eqnarray*}
  V_{6}(n) & \leq & C(2M) \sum_{i=1}^{r_{n}-l_{n}} \mathrm{E} \bigg[ \bigg| f \bigg(\frac{i}{n}, \frac{Y_{i}}{a_{n}} \bigg) -
       f \bigg(\frac{r_{n}}{n}, \frac{Y_{i}}{a_{n}} \bigg)
       \bigg| 1_{ \big\{ \frac{|Y_{i}|}{a_{n}} > \delta \big\} } \bigg].
\end{eqnarray*}
Since a continuous function on a compact set is uniformly
continuous, it follows that for any $\epsilon >0$ there exists
$\gamma >0$ such that for $(s,x), (s',x') \in [0,1] \times \{y \in
\mathbb{E} : |y| > \delta \}$, if $d((s,x),(s',x')) < \gamma$ then $|f(s,x) - f(s',x')|<
\epsilon$, where $d$ is the
metric on the direct product of metric spaces $[0,1]$ and
$\mathbb{E}$, i.e.
$ d((s,x),(s',x')) = \max \{ |s-s'|,
\rho(x,x') \}$ with $\rho$ defined in (\ref{e:metricrho}). Since $r_{n}/n \to 0$ as $n \to \infty$,
for large $n$ we have
$$ d \bigg( \bigg(\frac{i}{n}, \frac{Y_{i}}{a_{n}}\bigg),
  \bigg(\frac{r_{n}}{n}, \frac{Y_{i}}{a_{n}}\bigg) \bigg) = \frac{|i
   -r_{n}|}{n} \leq \frac{r_{n}}{n} < \gamma,$$
for any $i=1, \ldots , r_{n}-l_{n}$. Therefore, for large $n$,
$$ \bigg| f \bigg(\frac{i}{n}, \frac{Y_{i}}{a_{n}} \bigg) -
       f \bigg(\frac{r_{n}}{n}, \frac{Y_{i}}{a_{n}} \bigg)
       \bigg| < \epsilon,$$ and this implies
\begin{eqnarray}\label{e:I6}
  \nonumber V_{6}(n) &\leq&  \epsilon\,C(2M) \sum_{i=1}^{r_{n}-l_{n}} \mathrm{P} (|Y_{i}|>\delta a_{n})
  \leq  \epsilon\,C(2M) \sum_{i=1}^{r_{n}-l_{n}} \mathrm{P} (\beta_{2}|Z_{i}|>\delta a_{n})\\[0.5em]
   &\leq& \epsilon\,C(2M)(r_{n}-l_{n}) \mathrm{P} (|Z_{1}| > \beta_{2}^{-1}\delta a_{n})
\end{eqnarray}
for large $n$.
Taking into account relations (\ref{e:I5}) and (\ref{e:I6}), it
follows that, for large $n$,
$$ V_{3}(n) \leq 4 \alpha_{l_{n}+1} + \epsilon\,C(2M) r_{n}
\mathrm{P}(|Z_{1}| > \beta_{2}^{-1}\delta a_{n}) + V_{7}(n),$$
 and since it is easy to obtain
\begin{equation*}
 V_{7}(n) \leq \bigg|
     \mathrm{E} \bigg[ \exp \bigg\{ - \sum_{k=2}^{k_{n}} \sum_{i=(k-1)r_{n}+1}^{kr_{n}-l_{n}}
     f \bigg(\frac{i}{n}, \frac{Y_{i}}{a_{n}} \bigg) \bigg\} \bigg] - \prod_{k=2}^{k_{n}} \mathrm{E}
      \bigg[ \exp \bigg\{ - \sum_{i=1}^{r_{n}-l_{n}}
      f \bigg(\frac{kr_{n}}{n}, \frac{Y_{i+(k-1)r_{n}}}{a_{n}} \bigg) \bigg\} \bigg]
      \bigg|,
\end{equation*}
we recursively obtain (we repeat the same procedure for $V_{7}(n)$
as we did for $V_{3}(n)$ and so on)
\begin{equation}\label{e:I3}
    V_{3}(n) \leq 4 k_{n} \alpha_{l_{n}+1} + \epsilon\,C(2M)
    k_{n}r_{n} \mathrm{P} (|Z_{1}|>\beta_{2}^{-1}\delta a_{n}).
\end{equation}
Finally (\ref{eq:durrett}) and the same procedure that we used before for $V_{1}(n)$ give
\begin{eqnarray}\label{e:I4}
\nonumber V_{4}(n) &\leq & \sum_{k=1}^{k_{n}} \mathrm{E} \bigg| \exp \bigg\{ - \sum_{i=1}^{r_{n}-l_{n}} f \bigg(\frac{kr_{n}}{n}, \frac{Y_{i+(k-1)r_{n}}}{a_{n}}
 \bigg) \bigg\}
 -  \exp \bigg\{ - \sum_{i=1}^{r_{n}} f \bigg(\frac{kr_{n}}{n}, \frac{Y_{i+(k-1)r_{n}}}{a_{n}} \bigg)
 \bigg\} \bigg|\\[0.5em]
 \nonumber &\leq& \sum_{k=1}^{k_{n}} \mathrm{E} \bigg| 1
 -  \exp \bigg\{ - \sum_{i=r_{n}-l_{n}+1}^{r_{n}} f \bigg(\frac{kr_{n}}{n}, \frac{Y_{i+(k-1)r_{n}}}{a_{n}} \bigg)
 \bigg\} \bigg|\\[0.5em]
 \nonumber &\leq& \sum_{k=1}^{k_{n}} \mathrm{E} \bigg[ \sum_{i=r_{n}-l_{n}+1}^{r_{n}} f \bigg(\frac{kr_{n}}{n}, \frac{Y_{i+(k-1)r_{n}}}{a_{n}} \bigg)
  \bigg]\\[0.5em]
  &\leq&  M k_{n}l_{n} \mathrm{P} (|Z_{1}|> \beta_{2}^{-1}\delta a_{n})
\end{eqnarray}
Thus from (\ref{e:I}), (\ref{e:I1}), (\ref{e:I2}),
(\ref{e:I3}) and (\ref{e:I4}) it follows that for large $n$,
\begin{equation*}
  V(n) \leq \bigg( M \frac{j_{n}}{n} + 2M \frac{k_{n}l_{n}}{n} +
         \epsilon\,C(2M) \frac{k_{n}r_{n}}{n} \bigg) \cdot n \mathrm{P}(|Z_{1}|> \beta_{2}^{-1}\delta a_{n})
    + \ 4 k_{n} \alpha_{l_{n}+1}.
\end{equation*}
 Since $Z_{1}$ is regularly varying, by (\ref{e:seqan1}) it holds
 that
 $ n\mathrm{P}(|Z_{1}|> \beta_{2}^{-1}\delta a_{n})\rightarrow \beta_{2}^{\alpha}\delta^{-\alpha}$
 as $n \rightarrow \infty$.
 This, together with relation
  (\ref{e:strmix1}) and the fact that $j_{n}/n \rightarrow 0$ and $k_{n}r_{n}/n \rightarrow 1$
 as
 $n \rightarrow \infty$, implies
$$ \limsup_{n \rightarrow \infty} V(n) \leq \epsilon\,C(2M)
    \beta_{2}^{\alpha}\delta^{-\alpha}.$$
But since this holds for all $\epsilon >0$, we get $ \lim_{n
\rightarrow \infty} V(n) = 0$, and thus condition $\mathcal{A}'$
 holds.
\end{proof}

The fact that relation (\ref{e:DaRe15}) in the proof of Theorem~\ref{t:mainppc} holds under (\ref{e:DaRe16}) is justified by the following result. Let $\mathbb{S}$ be a locally compact topological space with countable base. For $\eta \in M_{+}(\mathbb{S})$ and $f \in C_{K}^{+}(\mathbb{S})$ put
$$\eta (f) = \int_{\mathbb{S}} f(x)\,\eta(\rmd x).$$

\begin{lemma}\label{l:convprob}
 Let $(\xi_{n})_{n}$ and $(\eta_{n})_{n}$ be two sequences of random measures, i.e.~ sequences of random elements of $M_{+}(\mathbb{S})$ such that for all $f \in C_{K}^{+}(\mathbb{S})$
 $$  \xi_{n}(f)-\eta_{n}(f) \Pto 0 \qquad \textrm{as} \ n \to \infty.$$
Then, as $n \to \infty$,
$$ \d_{v}( \xi_{n}, \eta_{n} ) \Pto 0,$$
where $d_{v}$ is the vague metric on $M_{+}(\mathbb{S})$.
\end{lemma}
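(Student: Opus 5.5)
The plan is to use the explicit form (\ref{e:vaguemetric}) of the vague metric,
$$d_{v}(\xi_{n},\eta_{n}) = \sum_{k=1}^{\infty} 2^{-k}\Bigl(1-\exp\bigl\{-|\xi_{n}(f_{k})-\eta_{n}(f_{k})|\bigr\}\Bigr),$$
where $(f_{k})_{k}\subset C_{K}^{+}(\mathbb{S})$ is the fixed sequence that defines the metric. Such a sequence exists for a locally compact space with countable base; see Kallenberg~\cite{Ka83} or Resnick~\cite{Re87}, Proposition 3.17. Each summand lies in $[0,2^{-k}]$, so the series can be truncated uniformly in $n$. The proof then reduces to finitely many coordinates, and for each of these the hypothesis applies directly.

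Fix $\epsilon>0$ and choose $K$ with $\sum_{k>K}2^{-k}<\epsilon/2$. Then
$$\Pr\bigl(d_{v}(\xi_{n},\eta_{n})>\epsilon\bigr) \le \Pr\Bigl(\sum_{k=1}^{K}2^{-k}\bigl(1-e^{-|\xi_{n}(f_{k})-\eta_{n}(f_{k})|}\bigr)>\epsilon/2\Bigr) \le \sum_{k=1}^{K}\Pr\bigl(1-e^{-|\xi_{n}(f_{k})-\eta_{n}(f_{k})|}>\epsilon/2\bigr).$$
The function $x\mapsto 1-e^{-|x|}$ is continuous and vanishes at $0$, and by hypothesis $\xi_{n}(f_{k})-\eta_{n}(f_{k})\Pto 0$ for each fixed $k$. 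Hence each of the finitely many terms tends to $0$ as $n\to\infty$. It follows that $\Pr(d_{v}(\xi_{n},\eta_{n})>\epsilon)\to 0$, which is the claim.

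There is no real obstacle here; the only point needing care is that the metric $d_{v}$ is built from a countable family $(f_{k})$ in $C_{K}^{+}(\mathbb{S})$. The hypothesis is applied only to these $f_{k}$, not to all of $C_{K}^{+}(\mathbb{S})$. Measurability of $d_{v}(\xi_{n},\eta_{n})$ follows because it is a countable sum of measurable functions of the random measures. In the application to (\ref{e:DaRe15}), one applies this argument with $\xi_{n}=N_{n}$ and with $\eta_{n}$ the truncated process for fixed $m$, uniformly in $m$ via the $\lim_{m}\limsup_{n}$ form of (\ref{e:DaRe16}). The same truncation bound gives
$$\lim_{m}\limsup_{n}\Pr(d_{v}>\epsilon)\le\sum_{k\le K}\lim_{m}\limsup_{n}\Pr(\cdots)=0.$$
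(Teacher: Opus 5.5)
Your proof is correct and follows the paper's argument: truncate the series defining $d_{v}$ at an index $K$ where the tail is below $\epsilon/2$, then apply a union bound over the finitely many $f_{k}$ and use the hypothesis for each one. The only difference is cosmetic: the paper first applies $1-e^{-x}\le x$ and then uses the threshold $\epsilon/(2k_{0})$, whereas you use continuity of $x\mapsto 1-e^{-|x|}$ at $0$ directly.
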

\begin{proof}
Let $\epsilon >0$ be arbitrary, and take $k_{0} \in \mathbb{N}$ such that
$\sum_{k=k_{0}+1}^{\infty}2^{-k} < \epsilon/2$. Then for every $f \in C_{K}^{+}(\mathbb{S})$
\begin{equation*}\label{l:vagmetricP1}
\sum_{k=k_{0}+1}^{\infty} 2^{-k} \big(1-e^{-|\xi_{n}(f)-\eta_{n}(f)|} \big) \leq \sum_{k=k_{0}+1}^{\infty} 2^{-k} < \frac{\epsilon}{2}.
\end{equation*}
This, together with the definition of the metric $d_{v}$ in $(\ref{e:vaguemetric})$ and the inequality $1-e^{-x} \leq x$ for $x \geq 0$, implies
\begin{eqnarray*}
\Pr[ \d_{v}( \xi_{n}, \eta_{n} ) > \epsilon]  &\leq& \Pr \bigg( \sum_{k=1}^{k_{0}}2^{-k} \big(1-e^{-|\xi_{n}(f_{k})-\eta_{n}(f_{k})|} \big) > \frac{\epsilon}{2} \bigg)\\[0.2em]
&\leq& \Pr \bigg( \sum_{k=1}^{k_{0}}2^{-k} |\xi_{n}(f_{k})-\eta_{n}(f_{k})| > \frac{\epsilon}{2} \bigg),
\end{eqnarray*}
where $(f_{k})_{k}$ is a certain sequence of functions in $C_{K}^{+}(\mathbb{S})$. Hence
$$ \Pr[ \d_{v}( \xi_{n}, \eta_{n} ) > \epsilon] \leq \sum_{k=1}^{k_{0}} \Pr \bigg( |\xi_{n}(f_{k})-\eta_{n}(f_{k})| > \frac{ \epsilon}{2k_{0}} \bigg),$$
and since by assumption $\xi_{n}(f_{k})-\eta_{n}(f_{k}) \Pto 0$ as $n \to \infty$ for every $k=1,\ldots,k_{0}$, we conclude that $ \d_{v}( \xi_{n}, \eta_{n} ) \Pto 0$.
\end{proof}

\begin{lemma}\label{l:ItoPRM}
 Let $N = \sum_{k} \delta_{(t_{k},j_{k})}$ be a Poisson process on $[0,1] \times \mathbb{E}$ with intensity measure $Leb \times \nu$, where $\mathbb{E} = [-\infty, \infty] \setminus \{0\}$ and
 $$\nu(\rmd x)= \lambda (p\,1_{(0,\infty)}(x) + q\,1_{(-\infty,0)}(x))\,\alpha |x|^{-\alpha-1} \rmd x,$$
  with $\lambda >0$, $\alpha > 0$, $p \in [0,1]$ and $q=1-p$. Let $b$ be a positive real number. Then
 $$ N^{+} := \sum_{k} \delta_{(t_{k}, b j_{k}1_{\{j_{k} >0\}})},$$
 is a Poisson process on $[0,1] \times \mathbb{E}$ with intensity measure $Leb \times \nu^{+}$, where
 $$ \nu^{+}(\rmd x) = \lambda p b^{\alpha}\alpha x^{-\alpha-1}1_{(0,\infty)}(x) \rmd x,$$
 and
  $$ N^{-} := \sum_{k} \delta_{(t_{k}, -b j_{k}1_{\{j_{k} <0\}})},$$
 is a Poisson process on $[0,1] \times \mathbb{E}$ with intensity measure $Leb \times \nu^{-}$, where
 $$ \nu^{-}(\rmd x) = \lambda q b^{\alpha}\alpha x^{-\alpha-1}1_{(0,\infty)}(x) \rmd x,$$
\end{lemma}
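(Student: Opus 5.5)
The plan is to apply the mapping (or marking) theorem for Poisson processes (Resnick~\cite{Resnick07}, Proposition 5.2) twice: first restrict $N$ to $[0,1]\times(0,\infty]$, then push it forward by a scaling map. Points with $j_{k}<0$ are sent to $b j_{k}1_{\{j_{k}>0\}}=0$, which is not in $\mathbb{E}$. So we interpret $N^{+}$ as the point process consisting only of the points with $j_{k}>0$, namely $N^{+}=\sum_{k}\delta_{(t_{k},bj_{k})}1_{\{j_{k}>0\}}$. This is the only sensible reading in $M_{p}([0,1]\times\mathbb{E})$, and it is also how $N^{+}$ enters the maxima in Theorem~\ref{t:fconvmax}.

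\emph{Step 1 (restriction).} Let $N|_{A}$ be the restriction of $N$ to $A=[0,1]\times(0,\infty]$. The restriction of a Poisson process to a measurable set is again Poisson, with intensity $(Leb\times\nu)|_{A}$. One can check this directly: for disjoint $B_{1},\dots,B_{k}$, the counts $N|_{A}(B_{i})=N(B_{i}\cap A)$ are counts of $N$ on disjoint sets, hence independent and Poisson with means $Leb\times\nu(B_{i}\cap A)$. The restricted intensity is $Leb\times\nu_{1}$, where $\nu_{1}(\rmd x)=\lambda p\,\alpha x^{-\alpha-1}1_{(0,\infty)}(x)\,\rmd x$.

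\emph{Step 2 (mapping).} Let $T(t,x)=(t,bx)$ on $[0,1]\times(0,\infty]$. This map is continuous and injective, and preimages of relatively compact sets are relatively compact: if $|bx|>u$ then $|x|>u/b$. Hence $N^{+}=N|_{A}\circ T^{-1}$ is Poisson with intensity $(Leb\times\nu_{1})\circ T^{-1}=Leb\times(\nu_{1}\circ S^{-1})$, where $S(x)=bx$. Since $\nu_{1}$ gives no mass to $\{\infty\}$, we get for $y>0$
$$\nu_{1}\circ S^{-1}((y,\infty])=\nu_{1}((y/b,\infty])=\lambda p\,(y/b)^{-\alpha}=\lambda p b^{\alpha}y^{-\alpha}=\nu^{+}((y,\infty]).$$
This identifies the measure as $\nu^{+}$, because measures on $(0,\infty]$ are determined by their tails.

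\emph{Step 3 (the negative part).} The case of $N^{-}$ is identical: restrict to $[0,1]\times[-\infty,0)$ and map by $(t,x)\mapsto(t,-bx)$. The restricted intensity has tail $\lambda q\,y^{-\alpha}$ for $[-\infty,-y)$, which gives $\nu^{-}$ with tail $\lambda q b^{\alpha}y^{-\alpha}$. Alternatively, one can verify everything at once by computing Laplace functionals via (\ref{e:LapPP}): for $f\in C_{K}^{+}$,
$$\E e^{-N^{+}(f)}=\E e^{-N(g)},\qquad g(t,x)=f(t,bx)1_{\{x>0\}}.$$
The right side equals $\exp\{-\iint(1-e^{-f(t,bx)})1_{\{x>0\}}\,\rmd t\,\nu(\rmd x)\}$, and the substitution $y=bx$ turns this into the Poisson Laplace functional with intensity $Leb\times\nu^{+}$.

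The main obstacle is not computational. It is the careful handling of the points that map to $0$, outside $\mathbb{E}$: we must justify discarding them so that $N^{+}$ is a genuine Radon point measure. The Laplace-functional route handles this cleanly, since $g$ vanishes where $x\le0$. The only remaining point in that route is that $g$ is bounded and measurable, though possibly discontinuous at $0$, and this is allowed because (\ref{e:LapPP}) holds for all $f\in\mathcal{B}_{+}$.
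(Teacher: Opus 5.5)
Your proof is correct and follows the same route as the paper. Both apply the mapping theorem (Resnick's Proposition 5.2) and identify the image intensity through its tail, $\nu((y/b,\infty])=\lambda p b^{\alpha}y^{-\alpha}$. The one difference is that you first restrict $N$ to $[0,1]\times(0,\infty]$, or check via Laplace functionals, so that the map genuinely takes values in $\mathbb{E}$; the paper instead applies Proposition 5.2 directly to $T_{1}(x)=bx1_{\{x>0\}}$ on $\mathbb{E}$ and tacitly ignores that negative points are sent to $0\notin\mathbb{E}$, a step your version makes rigorous.
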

\begin{proof}
Define $T_{1} \colon \mathbb{E} \to \mathbb{E}$ by $T_{1}(x)=b x 1_{\{ x >0 \}}$. By Proposition 5.2 in Resnick~\cite{Resnick07}
$$ \sum_{k} \delta_{(t_{k}, T_{1}(j_{k}))} = \sum_{k} \delta_{(t_{k}, b j_{k} 1 _{\{ j_{k}> 0\}})}$$
is a Poisson process with intensity measure $Leb \times (\nu \circ T_{1}^{-1})$.
For $x>0$ we have
\begin{eqnarray*}
(\nu \circ T_{1}^{-1})((x,\infty]) &=& \nu (\{ t \in \mathbb{E} : T_{1}(t) > x\}) = \nu (\{ t \in \mathbb{E} :  b t 1_{\{ t>0\}} >x \})= \nu \Big( \Big(\frac{x}{b}, \infty \Big] \Big)\\[0.6em]
&=& \lambda p \alpha \int_{x/b}^{\infty}x^{-\alpha-1}\,\rmd x = \lambda p b^{\alpha} x^{-\alpha},
\end{eqnarray*}
and
$$ (\nu \circ T_{1}^{-1})([-\infty,-x)) = \nu (\{ t \in \mathbb{E} :  t 1_{\{ t>0\}}b < -x \}) = \nu (\emptyset) =0.$$
Therefore
\begin{equation*}
(\nu \circ T_{1}^{-1})(\rmd x) =  \lambda p b^{\alpha} \alpha x^{-\alpha-1} 1_{(0,\infty)}(x) \rmd x.
\end{equation*}
The statement about $N_{-}$ follows similarly by taking $T_{2}(x)= -b x 1_{\{ x <0 \}}$ instead of $T_{1}$.
\end{proof}

\section*{Acknowledgements}
 This work has been supported by the European Union-NextGenerationEU-StatDinMatAn-uniri-iz-25-108 and MMUI-uniri-iz-25-74.


\end{document}